\documentclass[a4paper,reqno,11pt]{amsart}
\usepackage{amsmath}
\usepackage{amssymb}
\usepackage{cases}
\allowdisplaybreaks[4]

\newtheorem{thm}{Theorem}[section]

\newtheorem{lem}[thm]{Lemma}

\newtheorem{cor}[thm]{Corollary}

\newtheorem{conj}[thm]{Conjecture}
\newtheorem{claim}{Claim}

\theoremstyle{definition}
\newtheorem{definition}[thm]{Definition}

\theoremstyle{remark}
\newtheorem{remark}[thm]{Remark}

\numberwithin{equation}{section}

\newcommand{\bQ}{\mathbb{Q}}

\newcommand\OO{{\mathcal{O}}}
\newcommand{\rounddown}[1]{\lfloor{#1}\rfloor}
\newcommand{\roundup}[1]{\lceil{#1}\rceil}


\newcommand\Vol{\text{\rm Vol}}

\newcommand\mult{{\rm{mult}}}
\newcommand\Nklt{{\rm{Nklt}}}
\newcommand\Nlc{{\rm{Nlc}}}
\newcommand\lct{{\rm{lct}}}
\newcommand\ulct{{\rm{ulct}}}

\begin{document}

\title{On birational boundedness of Fano fibrations}
\date{November 9, 2016, revised version}
\author{Chen Jiang}
\address{Graduate School of Mathematical Sciences, the University of Tokyo,
3-8-1 Komaba, Meguro-ku, Tokyo 153-8914, Japan.}
\email{cjiang@ms.u-tokyo.ac.jp}
\curraddr{Kavli IPMU (WPI), UTIAS, The University of Tokyo, Kashiwa, Chiba 277-8583, Japan.}
\email{chen.jiang@ipmu.jp}

\thanks{The author was supported by Grant-in-Aid for JSPS Fellows (KAKENHI No. 25-6549).}

\begin{abstract}
We investigate birational boundedness of Fano varieties and Fano fibrations.
We establish an inductive step towards birational boundedness of Fano fibrations via conjectures related to boundedness of Fano varieties and Fano fibrations. 
As corollaries, we provide approaches towards birational boundedness and boundedness of anti-canonical volumes of varieties of $\epsilon$-Fano type.
Furthermore, we show birational boundedness of $3$-folds of $\epsilon$-Fano type.
\end{abstract} 

\keywords{log Fano varieties, Mori fibrations, linear systems, boundedness}
\subjclass[2010]{14E30, 14C20, 14J30, 14J45}
\maketitle
\pagestyle{myheadings} \markboth{\hfill  C. Jiang
\hfill}{\hfill On birational boundedness of Fano fibrations\hfill}

\tableofcontents

\section{Introduction}
Throughout this paper,  we work over the field of complex numbers $\mathbb{C}$.
See Subsection \ref{section notation} for notation and conventions.

A normal projective variety $X$ is \emph{of $\epsilon$-Fano type} if there exists an effective $\bQ$-divisor $B$ such that $(X, B)$ is an $\epsilon$-klt log Fano pair. 

We are mainly interested in the boundedness of varieties of $\epsilon$-Fano type. 
Our motivation  is the following conjecture due to A. Borisov, L. Borisov, and V. Alexeev.

\begin{conj}[{BAB}$_n$]
Fix an integer $n>0$, $0<\epsilon<1$.
Then the set of all
$n$-dimensional varieties of $\epsilon$-Fano type  is bounded.
\end{conj}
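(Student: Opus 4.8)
This is a deep conjecture and I will only outline a program, proceeding by induction on $n$; the strategy is to reduce boundedness to two \emph{uniform} statements and then assemble everything with a Noetherian argument. The statements are: (i) \emph{effective birationality} --- there is $m=m(n,\epsilon)$ such that $|-mK_X|$ defines a birational map for every $n$-dimensional $X$ of $\epsilon$-Fano type; and (ii) \emph{bounded volume} --- $\Vol(-K_X)\le M(n,\epsilon)$ for all such $X$. Note that (i) by itself already yields \emph{birational} boundedness, since $|-mK_X|$ then maps $X$ birationally onto a subvariety of a projective space of bounded dimension; adding (ii), the image has bounded degree, and a Noetherian argument on the relevant Hilbert scheme (of Hacon--McKernan--Xu type) collects all such $X$ into finitely many algebraic families, upgrading birational boundedness to boundedness.

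To produce (i) and (ii) the plan is as follows. First, \emph{reduce via the MMP}: passing to a small $\bQ$-factorialization and running a $K_X$-MMP produces, after finitely many steps, a Mori fiber space $Y\to Z$ birational to $X$; when $\dimm Z>0$ the generic fiber is a lower-dimensional variety of $\epsilon'$-Fano type and $Y\to Z$ is exactly the kind of object treated by the Fano fibration results of this paper, while $\dimm Z=0$ is the case of a Fano variety of Picard number one. Second, \emph{bound complements}: show there is $N=N(n,\epsilon)$ such that $X$ admits an $N$-complement, i.e. an effective $B^+$ with $(X,B^+)$ log canonical and $N(K_X+B^+)\sim 0$; this is obtained by cutting $X$ with a section of a bounded multiple of $-K_X$ to create a log canonical center, lifting an $N$-complement from that center by the inductive hypothesis in dimension $n-1$, and using the ACC for log canonical thresholds together with the global ACC for numerically trivial pairs to control the constant. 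Third, \emph{use the $N$-complement} to manufacture, for a general point $x\in X$, an effective divisor $\Delta_x$ $\bQ$-linearly equivalent to a bounded multiple of $-K_X$ with $(X,\Delta_x)$ non-klt precisely along a subvariety through $x$; then by adjunction and induction on dimension one shrinks this non-klt locus until it isolates $x$, which gives (i), and comparing $\Vol(-K_X)$ with the degree of the anti-canonical image yields (ii).

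The main obstacle is the third step, and more precisely the interaction of the second and third steps: one must control, \emph{uniformly in $X$}, the singularities both of the complement $B^+$ and of the auxiliary divisors $\Delta_x$ created inside the anti-canonical system. This is exactly where the $\epsilon$-klt hypothesis is used in an essential rather than formal way --- it is needed to bound the coefficients and multiplicities arising when one passes to a non-klt center and performs adjunction, so that the inductive hypotheses on lower-dimensional Fano varieties \emph{and} Fano fibrations can be fed back in. Making this induction close, i.e. matching the constants produced by adjunction against those demanded by the lower-dimensional statements (including the Fano fibration conjectures used throughout the paper), is the crux of the matter; it is for precisely this reason that the assertion is recorded here as a conjecture and that the present paper only establishes the corresponding inductive \emph{step}.
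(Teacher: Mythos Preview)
The statement you were given is recorded in the paper as a \emph{conjecture}, not a theorem; there is no proof of BAB$_n$ anywhere in the paper, and you correctly recognise this in your final paragraph. So there is nothing to compare your argument against in the sense the exercise intends.

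That said, it is worth contrasting your outlined program with what the paper \emph{does} prove, namely the inductive step toward BBAB$_n$ (Theorem~\ref{thm main} and Corollary~\ref{cor BBAB}). Your sketch aims directly at effective birationality for $|{-mK_X}|$ via bounded complements, adjunction to non-klt centers, and ACC-type results; this is essentially the Birkar program, and it is not the route taken here. The paper instead exploits the Mori fibration $f:X\to Z$ produced by the MMP and works with the mixed divisor $-r_{n-m}K_X+Nf^*(H)$ rather than $-mK_X$: ampleness is obtained from a lower bound on the log canonical threshold of the boundary (LCTB$_{n-1}$) combined with length of extremal rays, birationality by lifting sections from a general $X_1\in|f^*(H)|$ and inducting on $\dim Z$, and the volume bound from GA$_{n-m}$ on the general fiber. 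Complements, ACC for lct, and global ACC play no role in the paper's argument. Your program is more ambitious (it targets full boundedness, not just birational boundedness) and closer to the eventual resolution of the conjecture, but it is a genuinely different strategy from the one the paper pursues.
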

 {BAB}$_n$ is one of the most important conjectures in birational geometry and it is related to the termination of flips. Besides, since varieties of Fano type form a fundamental class in birational geometry according to Minimal Model Program, it is very interesting to understand the basic properties of this class, such as boundedness.

{BAB}$_2$ was proved  by Alexeev \cite{AK2} with a simplified argument by Alexeev--Mori \cite{AM}. But BAB$_{\geq 3}$ is still open. There are only  some partial boundedness results (cf.  \cite{BB, KMM92, K, KMMT, AB}).

 As the  approach to this conjecture, 
we are also interested in the following conjecture, where we consider birational boundedness instead of boundedness.

\begin{conj}[{\rm {BBAB}$_n$}]
Fix an integer $n>0$, $0<\epsilon<1$.
The set of all
$n$-dimensional varieties of  $\epsilon$-Fano type is birationally bounded.
\end{conj}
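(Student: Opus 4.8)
Since $\mathrm{BAB}_n$ trivially implies $\mathrm{BBAB}_n$, the interesting task is to prove birational boundedness assuming only lower-dimensional boundedness of Fano varieties together with suitable boundedness statements for Fano fibrations. The plan is to exhibit, for every $n$-dimensional $X$ of $\epsilon$-Fano type, a birational model carrying a linear system $|M|$ that is simultaneously \emph{birational} (i.e.\ induces a birational map onto its image) and of volume $\Vol(M)$ bounded in terms of $n$ and $\epsilon$ only; by the standard criterion, the existence of such systems forces the whole class to be birationally bounded. As birational boundedness is insensitive to birational modifications, I first run the minimal model program: since $X$ is of Fano type, $-K_X$ is big, so the $K_X$-MMP cannot terminate with a minimal model and instead ends with a Mori fibre space $f\colon X'\to Z$, where $X\dashrightarrow X'$ is birational, $\dim Z<n$, $Z$ is again of Fano type, and the generic fibre $F$ of $f$ is a variety of Fano type with $\dim F=n-\dim Z$. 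It thus suffices to construct a bounded birational linear system on $X'$.

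\textbf{The two building blocks.} The base cases $n=1,2$ are classical ($\mathbb{P}^1$ in dimension one, and Alexeev's theorem, which even gives boundedness, in dimension two); so assume $n\geq 3$ and that the statement, together with the conjectures to be invoked, holds in lower dimension. Applying the canonical bundle formula to $f$ produces a boundary $B_Z+M_Z$ on $Z$ (discriminant plus moduli part) making $Z$ of Fano type; the lower-dimensional instances of the boundedness conjectures, applied both to $Z$ and to the base of the Fano fibration $f$, then furnish a bounded model of $Z$ and a very ample divisor $H_Z$ on it with $H_Z^{\dim Z}$ bounded in terms of $n,\epsilon$. On the other side, the generic fibre $F$ is a lower-dimensional variety of Fano type; here the Fano-fibration conjectures are used to bound $F$ together with the relative anticanonical data, producing a bounded integer $m=m(n,\epsilon)$ such that $-mK_{X'/Z}$ is $f$-very ample with fibres of bounded anticanonical degree.

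\textbf{Gluing and conclusion.} One then sets $M=-mK_{X'/Z}+f^{*}(kH_Z)$ for a suitably bounded $k=k(n,\epsilon)$. Because $-K_{X'/Z}$ is $f$-ample, $f_{*}\OO_{X'}(-mK_{X'/Z})$ is a sheaf of bounded rank on $Z$; twisting by the bounded ample $H_Z$ makes it globally generated, and a relative Kawamata--Viehweg/Fujita-type argument then shows that $|M|$ separates the fibres of $f$ (through $H_Z$) while $-mK_{X'/Z}$ separates points on the generic fibre, so that $|M|$ is birational. Moreover $\Vol(M)$ is a polynomial in $m$, $k$, $H_Z^{\dim Z}$ and the bounded fibre degree, hence is bounded in terms of $n$ and $\epsilon$. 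Transporting $|M|$ back along $X\dashrightarrow X'$ yields the desired bounded birational linear system on a birational model of $X$, and the criterion from the first paragraph completes the proof.

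\textbf{Main obstacle.} The crux, and the source of the conditionality, is the fibration step: controlling how $\epsilon$-kltness degenerates along $f$. Adjunction to the generic fibre preserves klt-ness but not the numerical value $\epsilon$, and the canonical bundle formula introduces the moduli $\mathbb{Q}$-divisor $M_Z$, whose nefness, positivity and Cartier index must be bounded uniformly for the induction on $\dim Z$ to go through --- this uniformity is exactly what the Fano-fibration conjectures are designed to supply. A secondary difficulty is making the relative positivity in the gluing step effective: one must bound the rank of $f_{*}\OO_{X'}(-mK_{X'/Z})$ and the ample twist needed for global generation, which amounts to relative Matsusaka/Fujita-type statements with constants depending only on $n$.
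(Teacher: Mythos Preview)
Your overall architecture matches the paper's: reduce via MMP to a Mori fibre space $f\colon X\to Z$, bound the base, and then produce a divisor of the shape ``anti-canonical plus bounded pullback from the base'' that is simultaneously birational and of bounded volume. But two of your key steps are genuine gaps rather than routine details.

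\medskip
\textbf{Volume bound.} Your claim that $\Vol(M)$ is ``a polynomial in $m$, $k$, $H_Z^{\dim Z}$ and the bounded fibre degree'' is false as stated. Expanding $M^n$ produces mixed terms such as $(-K_{X})^{i}\cdot (f^*H_Z)^{n-i}$ with $0<n-i<\dim Z$, and these are not determined by the top self-intersections on base and fibre; they depend on the particular fibration and can a priori be arbitrarily large. The paper handles this with a substantial argument (its Theorem on volume bounds): assuming $\Vol(-K_X+kf^*H)$ is large, one finds an effective divisor $B'\sim_{\bQ}-K_X-wf^*H$ with $w$ large, builds from it a non-klt centre dominating $Z$ via the Connectedness Lemma, and then restricts to a general fibre $F$, where the lower bound on the $\alpha$-invariant (conjecture $\mathrm{GA}_{n-m}$) forces a contradiction. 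Nothing of this sort is in your sketch.

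\medskip
\textbf{Uniform ampleness / global generation.} You assert that twisting $f_*\OO_X(-mK_{X/Z})$ by ``a suitably bounded $k$'' copies of $H_Z$ gives global generation, appealing to a ``relative Matsusaka/Fujita-type'' statement with constants depending only on $n$. No such uniform statement is available; the required twist depends on the positivity of $-K_X$ relative to $f^*H$, which is exactly what must be bounded. The paper's mechanism is different and specific: it shows that for a uniform $t_0>0$ (coming from conjecture $\mathrm{LCTB}_{n-1}$ applied to a hyperplane section of the fibration) the pair $(X,(1+t_0)B)$ has non-klt locus contracted by $f$, and then the length-of-extremal-rays bound gives a uniform $L$ with $-K_X+Lf^*H$ ample. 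This is the step where the ``Fano-fibration conjectures'' enter concretely, and your sketch does not supply an equivalent.

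\medskip
Two smaller points: the paper works with $-K_X$ rather than $-K_{X/Z}$ (harmless, since $K_Z$ is bounded once $Z$ is); and you do not treat the case $\dim Z=0$, which cannot be handled by induction and requires an independent input ($\mathrm{BTBAB}_n$ in the paper).
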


Unfortunately, {BBAB}$_{\geq 3}$ is still open, but
{BBAB}$_2$ holds even without the assumption $\epsilon>0$, since  all  surfaces of Fano type are rational (cf. \cite{AM}). However, in dimension three and higher, it is necessary to assume  $\epsilon>0$ due to counterexamples constructed by Lin \cite{Lin} and Okada \cite{Okada1, Okada2}. 

Besides the boundedness of the families, we are also interested in the boundedness of special invariants of the families. One of the most interesting invariants is the anti-canonical volume.

\begin{conj}[WBAB$_n$]
Fix an integer $n>0$, $0<\epsilon<1$. 
Then there exists a  number $M(n,\epsilon)$ depending only
on $n$ and $\epsilon$ with the following property:

If  $X$ is an $n$-dimensional variety of $\epsilon$-Fano type, then 
${\rm Vol}(X, -K_X) \leq M(n,\epsilon).$
\end{conj}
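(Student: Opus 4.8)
The plan is to derive $\mathrm{WBAB}_n$, conditionally, by reducing via the minimal model program to the case of a Fano fibration and then appealing to boundedness of Fano varieties and of Fano fibrations. First I would pass to a $\bQ$-factorialization of $X$, which alters neither $\Vol(X,-K_X)$ nor the property of being of $\epsilon$-Fano type. If $(X,B)$ is an $\epsilon$-klt log Fano pair then $(X,0)$ is already $\epsilon$-klt, and $X$ is of Fano type, so the $K_X$-minimal model program runs, preserves the $\epsilon$-klt property, and terminates; since the varieties appearing are all of Fano type, hence rationally connected, $K$ is never pseudo-effective, so the program cannot end with a minimal model and instead ends with a Mori fibration $f\colon X'\to Z$, $\dim Z<n$, where $(X',0)$ is $\epsilon$-klt. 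Along the program $\Vol(-K)$ does not decrease: a flip is an isomorphism in codimension one, hence preserves $h^{0}(-mK)$ for every $m$; and for a divisorial contraction $\pi\colon X_i\to X_{i+1}$ of a $K_{X_i}$-negative ray one has $-K_{X_i}=\pi^{*}(-K_{X_{i+1}})-aE$ with $a>0$, so $\Vol(X_i,-K_{X_i})\le\Vol(X_{i+1},-K_{X_{i+1}})$. Thus $\Vol(X,-K_X)\le\Vol(X',-K_{X'})$, and it suffices to bound the latter.

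Now split on $\dim Z$. If $\dim Z=0$, then $-K_{X'}$ is ample and $(X',0)$ is $\epsilon$-klt, so $X'$ is a $\bQ$-Fano variety of $\epsilon$-Fano type with $\rho(X')=1$; assuming $\mathrm{BAB}_n$, the variety $X'$ lies in a bounded family, so $\Vol(X',-K_{X'})=(-K_{X'})^{n}$ is bounded in terms of $n$ and $\epsilon$. If $\dim Z\ge 1$, the general fibre $F$ of $f$ has $\dim F\le n-1$ and $-K_F=-K_{X'}|_F$ is ample ($-K_{X'}$ is $f$-ample), while $(F,0)$ is $\epsilon$-klt; hence $F$ is of $\epsilon$-Fano type and, by $\mathrm{WBAB}_{n-1}$ together with $\mathrm{BAB}_{n-1}$, $\Vol(F,-K_F)$ is bounded and $F$ moves in a bounded family. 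The base $Z$ is of Fano type (the base of a Mori fibration from a variety of Fano type) and has dimension $<n$; granting that it is of $\epsilon'$-Fano type for some $\epsilon'=\epsilon'(n,\epsilon)>0$, it is birationally bounded by $\mathrm{BBAB}_{n-1}$. Feeding all of this into the boundedness of $n$-dimensional Fano fibrations — the inductive step established in this paper — puts the polarized fibration $(X'\to Z,\,-K_{X'})$, after a suitable birational model, in a bounded family, and therefore bounds $\Vol(X',-K_{X'})$. Combining the two cases gives $M(n,\epsilon)$.

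I expect two genuine obstacles. The first is the singularities of the base: to apply $\mathrm{BBAB}_{n-1}$ one must know that $Z$ is of $\epsilon'$-Fano type for a \emph{controlled} $\epsilon'$, and extracting this from the canonical bundle formula amounts to bounding the moduli $\bQ$-divisor on $Z$ — a difficult point where lower-dimensional boundedness results have to be fed back in. The second, and deeper, is that controlling the fibre $F$ (through its anti-canonical volume) and the base $Z$ (through birational boundedness) separately does \emph{not} bound $\Vol(X',-K_{X'})$: one genuinely needs the entire fibration $f\colon X'\to Z$ together with the polarization $-K_{X'}$ to vary in a bounded family, which is exactly why the inductive step must be formulated for Fano fibrations and not merely for Fano varieties, and why the heart of the argument is controlling how $X'$ ``twists'' over $Z$ — via effective non-vanishing for $-mK_{X'}$ with $m$ bounded, a relative Kawamata--Viehweg-type vanishing, and the birational boundedness of $Z$. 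Finally, since the $\dim Z=0$ case rests on $\mathrm{BAB}_n$ itself, the resulting bound is necessarily conditional.
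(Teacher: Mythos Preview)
Your outline --- reduce via MMP to a Mori fibration $f\colon X'\to Z$, split on $\dim Z$, bound the base and the fibre, and then assemble a bound for the total space --- is the paper's strategy for Corollary~\ref{cor WBAB}. But two steps fail as written.

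For $\dim Z=0$ you appeal to $\mathrm{BAB}_n$. Since $\mathrm{BAB}_n$ already implies $\mathrm{WBAB}_n$ directly (a bounded family has bounded anti-canonical volume), this makes the reduction vacuous. The paper gets around this by ensuring that $X'$ has \emph{terminal} singularities: the reduction Theorem~\ref{thm MFS} (from \cite{Jiang}) passes through a terminalization before the MMP, so the output Mori fibration has $X'$ terminal by definition. Then, when $\dim Z=0$, $X'$ is a $\bQ$-factorial terminal Fano of Picard number one and one needs only the far weaker $\mathrm{WTBAB}_n$. Your $(X,0)$-MMP yields only an $\epsilon$-klt $X'$, which is why you are forced into the circular hypothesis.

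For $\dim Z>0$ you propose to bound $\Vol(X',-K_{X'})$ by placing the fibration, ``after a suitable birational model, in a bounded family''. This inference is invalid: anti-canonical volume is not a birational invariant, so birational boundedness of $X'$ (which is all Theorem~\ref{thm main} delivers) does not cap $\Vol(-K_{X'})$. The paper does not route the volume bound through boundedness of the fibration at all. It uses $\mathrm{S}_n$ and $\mathrm{BAB}_{\leq n-1}$ (genuine boundedness of $Z$, not $\mathrm{BBAB}_{n-1}$) only to fix a very ample $H$ on $Z$ of bounded degree $d$, and then bounds the volume \emph{directly} via Theorem~\ref{thm volume}, whose real input is the conjecture $\mathrm{GA}_{n-m}$: a uniform lower bound on the $\alpha$-invariant of the general fibre $(F,B|_F)$. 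The mechanism is that if $\Vol(-K_{X'})$ were too large one could find an effective $B'\sim_\bQ -K_{X'}-wf^*H$ with $w$ large, interpolate the boundary to keep the log-anticanonical ample, and use the Connectedness Lemma plus inversion of adjunction to force a non-klt centre on a general fibre --- contradicting $\mathrm{GA}_{n-m}$. The tools you list in your second obstacle (effective non-vanishing for $-mK_{X'}$, relative Kawamata--Viehweg vanishing) appear in the paper only in the \emph{birationality} half of Theorem~\ref{thm main} (Lemma~\ref{lemma birational}), not in the volume estimate; the hypothesis $\mathrm{GA}_{\leq n-1}$ that actually carries the weight is missing from your plan.
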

WBAB$_{\leq 3}$ was proved by the author \cite{Jiang} very recently, while  WBAB$_{\geq 4}$ is still open. 

See Subsection \ref{section conjectures} for more conjectures related to boundedness, such as TBAB, GA, and LCTB.

The main goal of this paper is to investigate BBAB$_n$, especially, to prove BBAB$_3$ (Corollary \ref{cor BBAB3}). As shown in \cite{Jiang}, according to Minimal Model Program, it suffices to investigate varieties of $\epsilon$-Fano type with a Mori fibration  (see Theorem \ref{thm MFS}).
We define the concept of an  $(n, d, a, \epsilon)$-Fano fibration which is a natural generalization of a variety of $\epsilon$-Fano type with a Mori fibration, see Definition \ref{ndae}. 

It is expected that the boundedness of fibrations follows from that of bases and general fibers, and some additional boundedness information on the ambient spaces.
The following is the main theorem of this paper.

\begin{thm}\label{thm main}Fix integers $n>m>0$ and $d>0$,  a rational number $a\geq 0$, $0<\epsilon<1$. Assume {\rm GA}$_{n-m}$, {\rm LCTB}$_{n-1}$,  and ${\rm BrTBAB}_{n-m}$ hold.  Then there exist a positive integer $N(n,d, a, \epsilon)$  and a number $V(n,d, a, \epsilon)$ depending only on $n$, $d$, $a$, and  $\epsilon$, satisfying the following property:

If $(f:X\to Z, B, H)$ is an almost-extremal $(n,d, a,\epsilon)$-Fano fibration with $\dim Z=m$, then 
\begin{itemize}
\item[(i)]  $|-r_{n-m}K_X+N(n, d, a, \epsilon)f^*(H)|$ is ample and gives a birational map;

\item[(ii)] $\Vol(X, -r_{n-m}K_X+N(n, d, a, \epsilon)f^*(H))\leq V(n, d, a, \epsilon)$.
\end{itemize}
In particular, the set of such $X$ forms a birationally bounded family.
\end{thm}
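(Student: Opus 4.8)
\emph{Sketch of the intended argument.} The plan is to reduce everything to the bounded geometry of a general fibre of $f$ and of the polarized base $(Z,H)$, and then to reassemble that data through a single, uniformly chosen linear system $|-r_{n-m}K_X+Nf^*(H)|$. Put $A:=-(K_X+B)$; this is ample since $(X,B)$ is log Fano, and $-K_X=A+B$ is $f$-ample because $\rho(X/Z)=1$. Let $F$ be a general fibre: by adjunction $(F,B|_F)$ is an $\epsilon$-klt log Fano pair of dimension $n-m$, so $F$ is a variety of $\epsilon$-Fano type and $-K_F=-K_X|_F$ is ample. Applying ${\rm BrTBAB}_{n-m}$ to $F$ yields a positive integer $r_{n-m}$, depending only on $n-m$ and $\epsilon$, such that $-r_{n-m}K_F$ is Cartier, $|-r_{n-m}K_F|$ defines a birational map, and $\Vol(F,-r_{n-m}K_F)$ is bounded. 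I take the polarized base $(Z,H)$ to range in a bounded family (this is part of what it means, together with the invariant $d$, for the fibration to be special; failing that, it can be extracted by slicing with general members of $|H|$ and invoking ${\rm LCTB}_{n-1}$). Granting this, since $-r_{n-m}K_X$ is $f$-ample there is a bound $N_1=N_1(n,d,a,\epsilon)$ so that $L_N:=-r_{n-m}K_X+Nf^*(H)$ is ample on $X$ for all $N\ge N_1$.

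The central object is the sheaf $\mathcal{E}:=f_*\OO_X(-r_{n-m}K_X)$. By cohomology and base change --- using that $-r_{n-m}K_F$ has vanishing higher cohomology on the general fibre --- $\mathcal{E}$ is torsion-free of rank $\rho_0:=h^0(F,-r_{n-m}K_F)$, a bounded number, and $f_*\OO_X(L_N)=\mathcal{E}(NH)$ by the projection formula. Here ${\rm GA}_{n-m}$ enters: applied to the bounded family of fibres furnished by ${\rm BrTBAB}_{n-m}$, it bounds, in terms of $n-m$ and $\epsilon$, the twist needed to make such relative anticanonical pushforwards globally generated over the open locus where they are locally free. Feeding this into Castelnuovo--Mumford regularity on the bounded base $(Z,H)$, I can fix a single $N=N(n,d,a,\epsilon)\ge N_1$ for which $\mathcal{E}(NH)$ is globally generated over a dense open $U\subseteq Z$ and $h^0(X,L_N)=h^0(Z,\mathcal{E}(NH))$ is bounded, say by $\rho_1(n,d,a,\epsilon)$. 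This already establishes the ampleness half of (i) and gives a rational map $\varphi=\varphi_{|L_N|}\colon X\dashrightarrow\bP^{\rho_1-1}$ with target of bounded dimension.

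To show $\varphi$ is birational onto its image I argue first along, then across, the fibres. For a general $u\in U$ the evaluation $H^0(Z,\mathcal{E}(NH))\to\mathcal{E}(NH)\ot k(u)=H^0(F_u,-r_{n-m}K_{F_u})$ is surjective by global generation, so $\varphi|_{F_u}$ is the morphism attached to the complete linear system $|-r_{n-m}K_{F_u}|$, which is birational onto its image by the choice of $r_{n-m}$. On the other hand, after enlarging $N$ within a bound depending only on $(n,d,a,\epsilon)$ (using boundedness of $(Z,H)$ and $\rank\mathcal{E}\ge 1$), the sections of $\mathcal{E}(NH)$ separate general points of $Z$, so $\varphi$ carries distinct general fibres into distinct linear subspaces; hence $\varphi$ is generically injective along the general fibres. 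Being fibrewise birational and generically injective across fibres, $\varphi$ is birational, which is (i).

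Finally, for (ii), ampleness of $L_N$ gives $\Vol(X,L_N)=(L_N^{\,n})=\sum_{j=0}^{m}\binom{n}{j}N^{j}\bigl((-r_{n-m}K_X)^{\,n-j}\cdot(f^*H)^{\,j}\bigr)$, the terms with $j>m$ vanishing because $H$ lives on the $m$-dimensional base. The top term $j=m$ equals $r_{n-m}^{\,n-m}(H^m)_Z\,\Vol(F,-K_F)$, bounded by the boundedness of $(Z,H)$ and of $F$; the lower terms are bounded by restricting to $f^{-1}(\text{general member of }|H|)$, which yields an $(n-1)$-dimensional special Fano fibration over an $(m-1)$-dimensional base with the same fibre dimension and with numerical data bounded in terms of the originals --- it is precisely here that ${\rm LCTB}_{n-1}$ supplies the uniform control of singularities needed to keep the sliced pair in the right class --- and then running the same argument in one lower dimension. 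Since $N$ is itself bounded in terms of $(n,d,a,\epsilon)$, this yields $\Vol(X,L_N)\le V(n,d,a,\epsilon)$. Birational boundedness of the set of such $X$ is then formal: the images $\varphi(X)\subseteq\bP^{\rho_1-1}$ are subvarieties of bounded dimension and bounded degree in a projective space of bounded dimension, hence bounded, and each $X$ is birational to its image. The step I expect to be the real obstacle is the uniform control of the sheaf $\mathcal{E}$ and of the base $(Z,H)$ --- that is, making the single integer $N$ depend only on $(n,d,a,\epsilon)$ --- which is exactly what forces ${\rm GA}_{n-m}$ and ${\rm LCTB}_{n-1}$ into the hypotheses.
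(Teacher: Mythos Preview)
Your proposal has genuine gaps, and you have essentially swapped the roles of the two hypotheses ${\rm LCTB}_{n-1}$ and ${\rm GA}_{n-m}$.

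First, two definitional slips: ``special'' means only $\overline{NE}(X)=NE(X)$, not that $(Z,H)$ sits in a bounded family or that $\rho(X/Z)=1$; and for $a>0$ the pair $(X,B)$ is \emph{not} log Fano, since one only has $-(K_X+B)\sim_\bQ A-af^*(H)$. More seriously, your assertion that ``since $-r_{n-m}K_X$ is $f$-ample there is a bound $N_1=N_1(n,d,a,\epsilon)$ so that $L_N$ is ample for all $N\ge N_1$'' is the entire content of part~(i), and nothing you wrote justifies the uniformity. Boundedness of the base, even if you had it, controls $Z$, not how $-K_X$ pairs against curves on $X$ that are not contracted by $f$. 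In the paper this is exactly where ${\rm LCTB}_{n-1}$ enters: one restricts to a general member of $|f^*(H)|$ (or a general fibre) to see that $(X,(1+t_0)B)$ has no non-klt centers dominating $Z$ for a uniform $t_0>0$, and then applies length of extremal rays to the pair $(X,(1+t_0)B)$ to bound $-K_X\cdot C$ from below on every extremal ray not contracted by $f$. With ampleness in hand, birationality follows by Kawamata--Viehweg vanishing and induction on $\dim Z$, lifting sections from $|{-r_{n-m}K_F}|$ on a general fibre; no control of $f_*\OO_X(-r_{n-m}K_X)$ via Castelnuovo--Mumford regularity is needed (and none is available without first bounding the base).

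Second, your invocation of ${\rm GA}_{n-m}$ is not what that conjecture says. ${\rm GA}_{n-m}$ gives a uniform lower bound $\alpha(F,B|_F)\ge\mu(n-m,\epsilon)$ on the generalized $\alpha$-invariant of the fibre pair; it is a statement about log canonical thresholds of divisors $\bQ$-linearly equivalent to $-(K_F+B|_F)$, and it does not produce twists making relative anticanonical pushforwards globally generated. In the paper ${\rm GA}_{n-m}$ is used for the \emph{volume} bound~(ii), not for birationality: if $\Vol(X,-K_X+kf^*(H))$ were too large one produces an effective $B'\sim_\bQ -K_X-wf^*(H)$ with $w$ large, and a Connectedness Lemma argument forces $\Nklt(X,(1-s)B+sB')$ to dominate $Z$ for a small $s$; restricting to a general fibre and invoking $\glct(F,B|_F;B'|_F-B|_F)\ge\mu(n-m,\epsilon)$ then bounds $w$. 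Your expansion of $(L_N^{\,n})$ into mixed terms and appeal to ${\rm LCTB}_{n-1}$ for the slices does not bound those terms: slicing by $|f^*(H)|$ already stays in the class of $(n-1,d,a+1,\epsilon)$-Fano fibrations without any use of ${\rm LCTB}$, and the individual intersection numbers $(-K_X)^{n-j}\cdot(f^*H)^j$ are not a priori bounded by anything you have assumed.
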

Here $r_{n-m}$ is an integer such that for any  $(n-m)$-dimensional terminal Fano variety $Y$,  
$|-r_{n-m}K_Y|$ gives a birational map.
The existence of such integer is implied by  ${\rm BrTBAB}_{n-m}$, see Conjecture \ref{conj vb}(2). See Subsection \ref{section conjectures} for conjectures assumed in the theorem. 

As corollaries, we establish inductive steps towards {\rm BBAB}$_n$ and  {\rm WBAB}$_n$.

\begin{cor}\label{cor BBAB}
Assume {\rm BAB}$_{\leq n-1}$, {\rm BTBAB}$_n$, {\rm LCTB}$_{n-1}$, {\rm GA}$_{\leq n-1}$, and ${\rm S}_n$ hold. Then {\rm BBAB}$_n$ holds.
\end{cor}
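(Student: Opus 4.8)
The strategy is to reduce {\rm BBAB}$_n$ to the situation handled by Theorem \ref{thm main} and then to check that only finitely many values of the parameters occur. By Theorem \ref{thm MFS} it suffices to prove birational boundedness for $n$-dimensional varieties $X$ of $\epsilon$-Fano type admitting a Mori fibration $f\colon X\to Z$; after running a minimal model program on a terminal model we may take $X$ to be $\bQ$-factorial and terminal, and we put $m=\dim Z$, so $0\le m<n$. If $m=0$, then $X$ is an $n$-dimensional terminal Fano variety (a Mori fiber space over a point), hence bounded by {\rm BTBAB}$_n$. From now on we assume $0<m<n$.

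Given such an $f\colon X\to Z$, the plan is to realize it, after a birational modification, as a \emph{special} $(n,d,a,\epsilon')$-Fano fibration (Definition \ref{ndae}) with $\epsilon'$ bounded below and $d,a$ bounded above, all in terms of $n$ and $\epsilon$ alone. The base $Z$ is of Fano type of dimension $m\le n-1$; analysing $f$ through adjunction and the canonical bundle formula --- the moduli part being controlled by {\rm GA}$_{\le n-1}$, since the fibers have dimension $n-m\le n-1$ --- one finds that $Z$ is in fact of $\epsilon'$-Fano type for some $\epsilon'=\epsilon'(n,\epsilon)>0$ and that the fibration parameter $a$ is bounded in the same terms. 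By {\rm BAB}$_{\le n-1}$ the base $Z$ then lies in a bounded family, so one may choose an ample divisor $H$ on $Z$ of bounded degree $d$. Finally {\rm S}$_n$ lets us replace the fibration by a \emph{special} one without leaving the birational class of $X$. In particular $d$, $a$ and $m$ range over a finite set determined by $n$ and $\epsilon$.

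It then remains to apply Theorem \ref{thm main}. For each of the finitely many triples $(m,d,a)$, its hypotheses are available: {\rm GA}$_{n-m}$ follows from {\rm GA}$_{\le n-1}$ because $n-m\le n-1$; {\rm LCTB}$_{n-1}$ is assumed; and {\rm BrTBAB}$_{n-m}$ follows from {\rm BTBAB}$_n$, which in particular furnishes the integer $r_{n-m}$ of Conjecture \ref{conj vb}(3). Theorem \ref{thm main} thus yields a birationally bounded family for each triple, and the union of these finitely many families, together with the $m=0$ case, gives {\rm BBAB}$_n$.

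The crux is the second step: showing that a Mori fibration coming from a variety of $\epsilon$-Fano type really can be promoted to a \emph{special} $(n,d,a,\epsilon')$-Fano fibration with all of $\epsilon'$, $d$, $a$ uniformly controlled. This is exactly where {\rm S}$_n$ enters, and it rests on careful bookkeeping of discrepancies through the canonical bundle formula, on {\rm GA}$_{\le n-1}$ to keep $\epsilon'$ bounded away from $0$ and $a$ finite, and on {\rm BAB}$_{\le n-1}$ to bound the base and hence $d$.
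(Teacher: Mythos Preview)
Your overall plan --- reduce via Theorem \ref{thm MFS} to Mori fibrations, handle $m=0$ by {\rm BTBAB}$_n$, and feed the case $m>0$ into Theorem \ref{thm main} after bounding the parameters --- is correct and matches the paper. But the execution of the $m>0$ step contains several genuine errors.

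First, you have misread the role of ${\rm S}_n$. Conjecture ${\rm S}_n$ asserts precisely that the base $Z$ of the Mori fibration is of $\delta(n,\epsilon)$-Fano type; it is used to control $Z$, not to ``replace the fibration by a special one''. Specialness in the sense of Definition \ref{ndae}(6) (that $\overline{NE}(X)=NE(X)$) is automatic here because $(X,B)$ is already a klt log Fano pair, by the Cone Theorem; goodness (condition (5)) is automatic because $f$ is a Mori fibration with $\rho(X/Z)=1$. So no modification of the fibration is needed, and your invocation of the canonical bundle formula and of {\rm GA}$_{\le n-1}$ to bound an $\epsilon'$ on the base is superfluous: ${\rm S}_n$ does this in one stroke, after which {\rm BAB}$_{\le n-1}$ bounds $Z$ and hence $d=(H^{\dim Z})$.

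Second, there is no parameter $a$ to bound. Since $-(K_X+B)$ is already ample on $X$, the triple $(f:X\to Z,B,H)$ is a special $(n,d,0,\epsilon)$-Fano fibration with $a=0$; the parameter $a>0$ only appears after cutting by hyperplanes inside the proof of Theorem \ref{thm main}, not at this stage.

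Third, your implication ``{\rm BrTBAB}$_{n-m}$ follows from {\rm BTBAB}$_n$'' is wrong: these are conjectures in different dimensions and with different hypotheses ({\rm BTBAB}$_n$ concerns $n$-dimensional $\bQ$-factorial terminal Fano varieties of Picard number one, whereas {\rm BrTBAB}$_{n-m}$ asks for a uniform $r_{n-m}$ for \emph{all} $(n-m)$-dimensional terminal Fano varieties). The correct source of {\rm BrTBAB}$_{n-m}$ here is {\rm BAB}$_{\le n-1}$: boundedness of $(n-m)$-dimensional terminal Fano varieties (with $n-m\le n-1$) yields, by generic flatness and Noetherian induction, a uniform pluri-anti-canonical birationality bound $r_{n-m}$.
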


\begin{cor}\label{cor WBAB}
Assume {\rm BAB}$_{\leq n-1}$, {\rm WTBAB}$_n$, {\rm GA}$_{\leq n-1}$, and ${\rm S}_n$ hold. Then {\rm WBAB}$_n$ holds.
\end{cor}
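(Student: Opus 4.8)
The plan is to run the argument of Theorem \ref{thm main}, exploiting that {\rm WBAB}$_n$ needs only the volume estimate (ii). Thus the two hypotheses of Theorem \ref{thm main} that serve only the birational statement (i) can be relaxed: {\rm LCTB}$_{n-1}$ is not needed, and {\rm BrTBAB}$_{n-m}$ may be replaced by {\rm WTBAB}$_{n-m}$, since the only way {\rm BrTBAB}$_{n-m}$ enters the proof of (ii) is through a bound on $\Vol(F,-K_F)$ for the general fibre $F$. Finally {\rm WTBAB}$_{n-m}$ is a consequence of {\rm WTBAB}$_n$: a product of a terminal Fano variety with a projective space is again a terminal Fano variety, and passing from $Y$ to $Y\times\PPP^m$ multiplies the anti-canonical volume by an explicit factor depending only on the dimensions.

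First I would reduce to a Mori fibration. By Theorem \ref{thm MFS} together with ${\rm S}_n$, {\rm WBAB}$_n$ follows once one bounds $\Vol(X,-K_X)$ for an $n$-dimensional $X$ of $\epsilon$-Fano type carrying a Mori fibration $f\colon X\to Z$; the reduction runs a suitable {\rm MMP} on a variety of Fano type — hence on a Mori dream space, so it terminates — and $\Vol(-K_X)$ does not decrease along it, because each terminalization, divisorial contraction, and flip replaces the current variety $V$ by a variety $V'$ for which, on a common resolution, $-K_{V'}$ equals $-K_V$ plus an effective divisor. If $\dim Z=0$, then $X$ is a rank-one Fano of dimension $n$ and $\Vol(X,-K_X)$ is bounded by {\rm WTBAB}$_n$ together with {\rm BAB}$_{\le n-1}$ (pass to a terminal model and induct on the non-terminal locus, as in \cite{Jiang}). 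So assume $1\le\dim Z<n$ and put $m:=\dim Z$.

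Next I would present $f$ as a \emph{special} $(n,d,a,\epsilon)$-Fano fibration with $d$ and $a$ depending only on $n$ and $\epsilon$. Replacing $X$ by a terminalization followed by a relative {\rm MMP} over $Z$ — still not decreasing $\Vol(-K_X)$ and preserving the Fano type property — arranges that the general fibre is a terminal Fano of dimension $n-m$. Adjunction for $f$, which is where {\rm GA}$_{\le n-1}$ enters (in the fibre dimension $n-m\le n-1$), yields an effective $\bQ$-divisor $B_Z$ on $Z$ with $(Z,B_Z)$ an $\epsilon'$-klt log Fano pair for some $\epsilon'=\epsilon'(n,\epsilon)>0$; then {\rm BAB}$_{\le n-1}$ confines $Z$ to a bounded family, so one may fix on $Z$ a very ample divisor $H$ with $H^m\le d$ and choose $a\ge 0$, both depending only on $n$ and $\epsilon$, so that $(f\colon X\to Z,B,H)$ becomes a special $(n,d,a,\epsilon)$-Fano fibration in the sense of Definition \ref{ndae}.

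Finally I would apply the volume estimate. Re-running the proof of Theorem \ref{thm main}(ii) — which, for the volume bound, uses only {\rm WTBAB}$_{n-m}$ (to bound $\Vol(F,-K_F)$) and the boundedness of $Z$, neither the full strength of {\rm BrTBAB}$_{n-m}$ nor {\rm LCTB}$_{n-1}$ — and working with $-K_X$ in place of $-r_{n-m}K_X$, one obtains $\Vol(X,-K_X+N' f^*H)\le V'(n,d,a,\epsilon)$ for a suitable $N'=N'(n,d,a,\epsilon)$. Since $H$ is very ample, $N'f^*H$ is linearly equivalent to an effective divisor, so $\Vol(X,-K_X)\le\Vol(X,-K_X+N'f^*H)\le V'(n,d,a,\epsilon)$. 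Letting $m$ range over the finitely many values $1,\dots,n-1$ and adjoining the case $\dim Z=0$, the maximum of these bounds furnishes the required $M(n,\epsilon)$. I expect the main obstacle to lie in the reductions of the second and third paragraphs: one must verify that terminalization and the several {\rm MMP} runs turning an arbitrary $\epsilon$-Fano-type $n$-fold into a \emph{special} Fano fibration never decrease $\Vol(-K_X)$, keep the singularities $\epsilon'$-klt with $\epsilon'$ depending only on $n$ and $\epsilon$, and produce \emph{bounded} numerical data $(d,a)$ — precisely the bookkeeping that ${\rm S}_n$ and {\rm GA}$_{\le n-1}$ are meant to provide.
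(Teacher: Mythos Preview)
Your overall strategy --- reduce to a Mori fibration, bound the base via ${\rm S}_n$ and ${\rm BAB}_{\le n-1}$, then invoke a volume estimate for the fibration --- matches the paper. However, there is a genuine error, and the execution is far more circuitous than necessary.

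The error is in your reduction of ${\rm WTBAB}_{n-m}$ to ${\rm WTBAB}_n$ via products. If $Y$ is $\bQ$-factorial terminal Fano of Picard number one and $m\ge 1$, then $Y\times\PPP^m$ has Picard number $2$, so ${\rm WTBAB}_n$ (which concerns only Picard-number-one varieties) says nothing about it. This detour is in any case unnecessary: in the proof of the volume estimate (Theorem~\ref{thm volume}) the bound on $\Vol(F,-K_F)$ for the general fibre is supplied by ${\rm GA}_{n-m}$ alone (Remark~\ref{rem ga}, via the elementary inequality $\alpha(Y,0)\cdot\sqrt[n]{(-K_Y)^n}\le n$), and ${\rm BrTBAB}_{n-m}$ plays no role in part~(ii) whatsoever. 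Since ${\rm GA}_{\le n-1}$ is among your hypotheses, the fibre-volume bound comes for free. This is also where ${\rm GA}$ actually enters the argument --- in the fibre-volume bound and in the $\glct$-estimate inside Theorem~\ref{thm volume} --- not, as you suggest, in producing an $\epsilon'$-klt log Fano structure on $Z$; the latter is precisely the content of ${\rm S}_n$, which you already invoked.

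The paper's proof is accordingly much shorter. After Theorem~\ref{thm MFS}, $X$ carries a Mori fibration $f:X\to Z$, so $X$ is \emph{already} terminal, $\bQ$-factorial with $\rho(X/Z)=1$, and $(X,B)$ is $\epsilon$-klt log Fano; no further terminalization or relative {\rm MMP} is needed, and one may take $a=0$. If $\dim Z=0$, then $X$ is itself a $\bQ$-factorial terminal Fano of Picard number one and ${\rm WTBAB}_n$ applies directly --- no passage to a terminal model is required. If $\dim Z>0$, ${\rm S}_n$ and ${\rm BAB}_{\le n-1}$ bound $d=(H^{\dim Z})$, and Theorem~\ref{thm volume} --- which assumes only ${\rm GA}_{n-m}$ and does not require the fibration to be special --- applied with $k=0$ gives $\Vol(X,-K_X)\le V'(n,d,0,\epsilon,0)$ immediately. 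There is no need to introduce a positive $N'$ and then argue $\Vol(-K_X)\le\Vol(-K_X+N'f^*H)$.
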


Note that Corollary \ref{cor WBAB} was indicated in \cite{Jiang}, but not clearly stated.

As the most interesting corollaries, we prove {BBAB}$_3$ and {\rm WBAB}$_3$ unconditionally by proving the conjectures we need in lower dimension.
\begin{thm}\label{thm LCTB2}
{\rm LCTB}$_2$ holds.
\end{thm}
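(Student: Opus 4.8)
Recall that LCTB$_2$ asserts the existence of a uniform number $\delta=\delta(2,\epsilon)>0$ such that $(X,tD)$ is klt for every $0\le t<\delta$, every surface $X$ of $\epsilon$-Fano type, and every effective $\bQ$-divisor $D\sim_{\bQ}-K_X$; equivalently, $\glct(X)\ge\delta$ for all such $X$. The plan is to deduce this from the boundedness of surfaces of $\epsilon$-Fano type together with standard bounds relating log canonical thresholds and multiplicities in dimension two.

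First I would invoke BAB$_2$, proved by Alexeev and simplified by Alexeev--Mori, to place all surfaces of $\epsilon$-Fano type in a single bounded family $\mathcal X\to T$ with $T$ of finite type, equipped (after enlarging $T$) with a relatively very ample Cartier divisor $\mathcal H$, so that each such $X$ occurs as a fibre $\mathcal X_t$ and the intersection numbers $(-K_X)^2$ and $-K_X\cdot\mathcal H_t$, the Cartier index of $K_X$, and the singularity types of $X$ are all bounded in terms of $\epsilon$ alone. Given an effective $\bQ$-divisor $D\sim_{\bQ}-K_X$, one then has $D\cdot\mathcal H_t=-K_X\cdot\mathcal H_t\le C(\epsilon)$, and intersecting with general members of $|\mathcal H_t|$ through a point gives $\mult_p D\le C'(\epsilon)$ for every $p\in X$. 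Passing to a stratification of $T$, together with the base changes needed for the family to admit simultaneous minimal resolutions $\pi\colon\widetilde{\mathcal X}\to\mathcal X$, one sees that on such a resolution the coefficients of $\pi^*D$ along the (finitely many) exceptional curves are bounded in terms of $C'(\epsilon)$ and the discrepancies, and the strict transform of $D$ still has bounded multiplicities; hence $\lct(X;D)=\min_E\frac{1+a(E;X,0)}{\operatorname{ord}_E\pi^*D}$ is bounded away from $0$ by a constant $\delta=\delta(2,\epsilon)$, as wanted. Alternatively one may invoke the ACC for log canonical thresholds in dimension two (Alexeev; Hacon--McKernan--Xu) for the relevant coefficient set, the content of the previous step being then simply that the set of thresholds which occur is finite.

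The main obstacle is that the reduction to a bounded family is available only because of the hypothesis $\epsilon>0$: surfaces of Fano type do \emph{not} form a bounded family --- for instance the weighted projective planes $\mathbb{P}(1,1,k)$ are of Fano type but $\glct(\mathbb{P}(1,1,k))\to 0$ --- so BAB$_2$ must genuinely be used with its $\epsilon$-hypothesis, and one must keep careful track of the singular points of $X$, where the bound $\lct\ge 1/\mult$ valid in the smooth case has to be replaced by $\lct\ge c(\epsilon)/\mult$ with $c(\epsilon)$ governed by the bounded singularity types (this is why the resolution computation above, rather than a naive multiplicity bound, is needed). In the applications of Theorem \ref{thm main} the analogous role is played by the fact that the base $Z$ already lies in a bounded family produced by lower-dimensional BAB and GA. Modulo these points the argument is formal, and I expect the proof to be short.
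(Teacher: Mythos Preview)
Your proposal rests on a misreading of the statement. In this paper LCTB$_n$ is Conjecture~\ref{conj LCT}: for a \emph{good $(n,d,a,\epsilon)$-Fano fibration} $(f:X\to Z,B,H)$ one must find a uniform $\lambda=\lambda(n,d,a,\epsilon)>0$ with $(X,(1+t)B)$ klt for $0<t\le\lambda$. Thus LCTB$_2$ concerns the \emph{boundary} $B$ of a fixed pair, not an arbitrary $D\sim_{\bQ}-K_X$; the constants depend on $d$ and $a$ as well as $\epsilon$; and in the case $\dim Z=1$ one only has $-(K_X+B)\sim_{\bQ}A-af^*(H)$, so $B$ is not $\bQ$-linearly equivalent to $-K_X$ or to any divisor of uniformly bounded degree without further work. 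The ``good'' condition~(5) on $B''$ is also part of the hypothesis and is used in the paper's argument (Claim~\ref{claim bi}). What you have sketched is closer to a version of GA$_2$ for $B=0$, which is a different statement.

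Even if one tries to transplant your outline to the correct statement, two of the steps break down. First, a bound on $\mult_PB$ at closed points of a smooth surface does \emph{not} by itself give a uniform lower bound on the extra $t$ for which $(X,(1+t)B)$ stays klt: multiplicities can accumulate at infinitely near points, and this is exactly why the paper carries out the explicit blow-up analysis in Theorem~\ref{thm surface}. Second, the ACC shortcut is unavailable, since the coefficients of $B$ range over all of $[0,1-\epsilon)$ and do not lie in a DCC set. The paper's route is different from yours: it proves the self-contained surface estimate Theorem~\ref{thm surface} (bounding the length of the blow-up chain via intersection numbers and the $\epsilon$-klt hypothesis, in the spirit of Alexeev--Mori), and then, for each of the two cases $\dim Z=0$ (del~Pezzo) and $\dim Z=1$ (conic bundle), verifies the numerical hypotheses $\sum_i b_i\le m$, $(B)^2\le m$, $B\cdot G\le m$, $\mult_QB\le m$ for an explicit $m$ depending on $d,a,\epsilon$; BAB$_2$ is not invoked.
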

\begin{cor}\label{cor BBAB3}
{\rm BBAB}$_3$ holds.  That is, for $0<\epsilon<1$, the set of all
$3$-folds of  $\epsilon$-Fano type is birationally bounded.
\end{cor}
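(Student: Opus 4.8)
The plan is to obtain Corollary \ref{cor BBAB3} as a formal consequence of Corollary \ref{cor BBAB} applied with $n=3$: it suffices to check that all five of its hypotheses, namely BAB$_{\leq 2}$, BTBAB$_3$, LCTB$_2$, GA$_{\leq 2}$, and S$_3$, hold unconditionally. Four of these are either trivial in low dimension or already available in the literature, and the only genuinely new ingredient is LCTB$_2$, which is supplied by Theorem \ref{thm LCTB2}.

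I would verify the hypotheses in turn. BAB$_1$ is trivial, since a $1$-dimensional variety of $\epsilon$-Fano type is $\mathbb{P}^1$, and BAB$_2$ is Alexeev's theorem \cite{AK2}, in the streamlined form of Alexeev--Mori \cite{AM}; together these give BAB$_{\leq 2}$. BTBAB$_3$ --- boundedness of $3$-dimensional terminal Fano varieties --- follows from the boundedness of canonical, in particular terminal, $\bQ$-Fano $3$-folds due to Kollár--Miyaoka--Mori--Takagi \cite{KMMT}, building on work of Kawamata and of Kollár--Miyaoka--Mori \cite{KMM92}; this also produces the integer $r_1$ figuring in Theorem \ref{thm main}. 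For GA$_{\leq 2}$ and S$_3$ I would invoke the fact that the Minimal Model Program, together with termination and the structural results on Mori fibrations underlying Theorem \ref{thm MFS}, is fully established in dimension $\le 3$, so that these reduce to classical statements about curves and surfaces (see also \cite{Jiang}). Finally, LCTB$_2$ is precisely Theorem \ref{thm LCTB2}.

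Granting all five hypotheses, Corollary \ref{cor BBAB} applies with $n=3$ and yields BBAB$_3$: for every $0<\epsilon<1$ the family of $3$-folds of $\epsilon$-Fano type is birationally bounded.

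The essential obstacle is therefore not in this deduction, which is essentially bookkeeping once the abbreviations are unwound, but in the standalone ingredient Theorem \ref{thm LCTB2} --- the proof of LCTB$_2$ --- which must be carried out separately and is where the real work of this part of the paper lies. The classical boundedness inputs (Alexeev--Mori, Kollár--Miyaoka--Mori--Takagi) and the completeness of the $3$-dimensional MMP are the remaining pillars; conditional on all of them, the corollary follows at once.
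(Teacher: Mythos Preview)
Your overall strategy---apply Corollary~\ref{cor BBAB} with $n=3$ and verify its five hypotheses---is exactly the paper's approach, and your handling of BAB$_{\leq 2}$, BTBAB$_3$ (the paper cites Kawamata~\cite{K} rather than~\cite{KMMT}, but either suffices), and LCTB$_2$ is correct.

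However, your treatment of GA$_{\leq 2}$ and especially S$_3$ has a genuine gap. You write that both ``reduce to classical statements about curves and surfaces'' because the MMP is complete in dimension $\leq 3$. This is not a valid justification. GA$_2$ is a specific recent result proved in \cite[Theorem~2.8]{Jiang}; it is not a consequence of the MMP or of classical surface theory, and your parenthetical ``see also \cite{Jiang}'' does not substitute for actually invoking that theorem. More seriously, S$_3$ does \emph{not} follow from completeness of the $3$-dimensional MMP. The nontrivial content is that when $(X,B)$ is $\epsilon$-klt log Fano with a Mori fibration $X\to Z$ and $\dim Z=2$, the base $Z$ is of $\delta$-Fano type for some $\delta=\delta(\epsilon)>0$. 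The paper supplies an actual argument for this: it combines \cite[Corollary~3.3]{FG}, which gives a klt boundary $\Delta$ on $Z$ with $-(K_Z+\Delta)$ ample, with Birkar's result \cite[Corollary~1.7]{Birkar}, which gives a $\delta$-klt boundary $\Delta'$ with $K_Z+\Delta'\sim_{\mathbb{Q}}0$, and then interpolates between them. Birkar's theorem on singularities of bases of Fano-type fibrations is the essential external input here, and your proposal does not mention it.

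So the deduction is correct in outline but incomplete: you must cite \cite{Jiang} for GA$_2$ and, crucially, invoke \cite{Birkar} (with the short interpolation argument) for S$_3$.
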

\begin{cor}[\cite{Jiang}]\label{cor WBAB3}
 {\rm WBAB}$_3$ holds.  That is, for  $0<\epsilon<1$, there exists a number $M(3,\epsilon)$ such that for a
$3$-fold $X$ of  $\epsilon$-Fano type, $\Vol(X, -K_X)\leq M(3, \epsilon)$.
\end{cor}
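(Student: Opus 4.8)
The plan is to deduce this from the inductive step Corollary~\ref{cor WBAB}, applied with $n=3$. That corollary reduces ${\rm WBAB}_3$ to the validity of its four hypotheses in the relevant dimensions, namely ${\rm BAB}_{\leq 2}$, ${\rm WTBAB}_3$, ${\rm GA}_{\leq 2}$, and ${\rm S}_3$; the bound $M(3,\epsilon)$ is then precisely the constant produced by Corollary~\ref{cor WBAB} (which, unwinding the reduction to the Mori fibration case, ultimately rests on the volume estimate of Theorem~\ref{thm main}).

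First I would dispose of ${\rm BAB}_{\leq 2}$: ${\rm BAB}_1$ is trivial, a one-dimensional variety of $\epsilon$-Fano type being $\mathbb{P}^1$, and ${\rm BAB}_2$ is the theorem of Alexeev~\cite{AK2}, with the simplified argument of Alexeev--Mori~\cite{AM}. Next, ${\rm WTBAB}_3$ --- an upper bound for $\Vol(Y,-K_Y)$ over three-dimensional terminal Fano varieties $Y$ --- I would read off from the boundedness of $\bQ$-Fano threefolds with canonical, hence terminal, singularities due to Koll\'ar--Miyaoka--Mori--Takagi~\cite{KMMT}; this in fact yields the stronger ${\rm TBAB}_3$. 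Finally, ${\rm GA}_{\leq 2}$ and ${\rm S}_3$ hold unconditionally: ${\rm GA}_{\leq 2}$ only concerns curves and surfaces, where the boundedness and finiteness facts it encodes are classical, while ${\rm S}_3$ follows from the completed Minimal Model Program in dimension three (see Subsection~\ref{section conjectures} for the precise statements). Feeding all four inputs into Corollary~\ref{cor WBAB} produces the desired $M(3,\epsilon)$.

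I do not expect a real obstacle at this stage: all the substance has already been spent on Theorem~\ref{thm main} and on the reduction behind Corollary~\ref{cor WBAB}, and the statement itself is originally due to \cite{Jiang}. The only points deserving care are matching the precise low-dimensional formulations of ${\rm GA}$, ${\rm S}$, and ${\rm WTBAB}$ with what is available in the literature, and checking that the dimension indices line up --- for $n=3$ one invokes exactly ${\rm BAB}_1$, ${\rm BAB}_2$, ${\rm GA}_1$, ${\rm GA}_2$, ${\rm WTBAB}_3$, and ${\rm S}_3$, each of which holds.
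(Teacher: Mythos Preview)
Your overall strategy matches the paper's: invoke Corollary~\ref{cor WBAB} with $n=3$ and check the four hypotheses. The treatment of ${\rm BAB}_{\leq 2}$ is fine, and for ${\rm WTBAB}_3$ the paper happens to cite Kawamata's ${\rm TBAB}_3$~\cite{K} rather than~\cite{KMMT}, but either source suffices.

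The justifications for ${\rm GA}_2$ and ${\rm S}_3$, however, are genuine gaps. ${\rm GA}_2$ is \emph{not} classical: it asserts a uniform lower bound on $\alpha(X,B)$ over all $\epsilon$-klt log Fano surface pairs with terminal $X$, and the paper explicitly notes that it was proved only recently by the author in~\cite[Theorem~2.8]{Jiang}. Dismissing it as encoding ``classical boundedness and finiteness facts'' does not supply a proof. Similarly, ${\rm S}_3$ does not follow from ``the completed Minimal Model Program in dimension three''; it is a statement about a uniform $\delta(\epsilon)$ such that the base $Z$ of a Mori fibration from a variety of $\epsilon$-Fano type is itself of $\delta$-Fano type. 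The paper proves this (for the only nontrivial case $\dim Z=2$) by combining Birkar's result on singularities on the base of a Fano type fibration~\cite[Corollary~1.7]{Birkar}, which gives a $\delta$-klt log Calabi--Yau boundary on $Z$, with Fujino--Gongyo~\cite[Corollary~3.3]{FG}, which gives a klt log Fano boundary, and then interpolates. Without these specific inputs your verification of ${\rm S}_3$ is missing, so the appeal to Corollary~\ref{cor WBAB} is not yet justified.
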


\subsection{Conjectures and historical remarks}\label{section conjectures}
In this subsection, we collect conjectures related to boundedness of Fano varieties.

It is enough interesting to consider the boundedness of terminal Fano varieties.
\begin{conj}
Fix an integer $n$. 
\begin{enumerate}
\item {\rm ({TBAB}$_n$)} The set of all
$n$-dimensional $\bQ$-factorial terminal Fano varieties of Picard number one is bounded.

\item {\rm ({BTBAB}$_n$)} The set of all
$n$-dimensional $\bQ$-factorial terminal Fano varieties of Picard number one is birationally bounded.

\end{enumerate}
\end{conj}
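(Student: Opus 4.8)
We outline a possible line of attack; both statements are known in dimension at most $3$ and remain open for $n\ge 4$. Observe first that {\rm TBAB}$_n$ is \emph{not} a formal consequence of {\rm BAB}$_n$: a $\bQ$-factorial terminal Fano variety $X$ of Picard number one need not be $\epsilon$-klt for any $\epsilon$ depending only on $n$, since the minimal log discrepancy of $(X,0)$ can be comparable to the reciprocal of the Gorenstein index. This is exactly what makes the terminal case subtle, and why the two conjectures above are isolated. For {\rm BTBAB}$_n$ the natural reduction is that it should follow by combining {\rm WTBAB}$_n$ (a bound $(-K_X)^n\le V(n)$) with {\rm BrTBAB}$_n$ (an integer $r=r(n)$ for which $|-rK_X|$ defines a birational map), so the real content is to establish those two by induction on $n$.

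Granting the lower-dimensional cases of {\rm TBAB}, {\rm BTBAB}, {\rm WTBAB} and the auxiliary boundedness statements, I would proceed as follows. \textbf{Step 1} (sections and degree): since $-K_X$ is ample and $X$ is terminal, Kawamata--Viehweg vanishing gives $h^i(X,\OO_X)=0$ and $h^i(X,-mK_X)=0$ for $i>0$ and $m\ge 1$, hence $\chi(\OO_X)=1$ and $h^0(X,-mK_X)=\chi(X,-mK_X)$ is a polynomial in $m$; by singular Riemann--Roch its coefficients are controlled once $(-K_X)^n$ is bounded --- using $\chi(\OO_X)=1$, a Bogomolov--Miyaoka-type estimate for $c_2\cdot(-K_X)^{n-2}$, and boundedness of the contributions of the terminal singularities --- so $h^0(X,-mK_X)\le C(n,m)$, and $\deg\varphi_{|-mK_X|}(X)\le m^n(-K_X)^n$ whenever $\varphi_{|-mK_X|}$ is birational. \textbf{Step 2} (effective birationality, i.e.\ {\rm BrTBAB}$_n$): show that $|-mK_X|$ separates two general points for a uniform $m=m(n)$. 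The inductive strategy is, given a general point (or pair of points), to produce an effective $\bQ$-divisor $\Delta\sim_{\bQ}-\lambda K_X$ with $\lambda=\lambda(n)$ and coefficients staying away from the values forbidden by an {\rm LCTB}-type (ACC) input, whose non-klt locus isolates a center $V$ of dimension $<n$ through that point; then restrict to $V$ --- which should again be of Fano or Calabi--Yau type in dimension $<n$, where the lower-dimensional conjecture applies --- and lift sections using a vanishing of the form $H^1(X,\,-(m-j)K_X\otimes\III)=0$. Keeping $\lambda$ and the tie-breaking multiplicities uniform along the induction is where an effective boundedness statement for $\bQ$-complements is needed.

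The \textbf{main obstacle} is twofold. First, Step 2 in general dimension appears to require the full machinery of boundedness of complements and ACC for log canonical thresholds, used repeatedly along the induction, rather than any soft argument. Second, upgrading {\rm BTBAB}$_n$ to {\rm TBAB}$_n$ is itself a serious difficulty: once $X$ is known to vary in a bounded birational family one must still recover a biregular bound for its $\bQ$-factorial terminal model of Picard number one, which amounts to propagating the bound through a Minimal Model Program and is, in the terminal setting, of essentially the same order of difficulty as the original problem. In this sense the lower-dimensional hypotheses and the complement/threshold inputs enter in an essential way, paralleling the role of {\rm GA}, {\rm LCTB} and {\rm BrTBAB} in Theorem \ref{thm main}.
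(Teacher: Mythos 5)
This item is stated in the paper as a conjecture ({\rm TBAB}$_n$/{\rm BTBAB}$_n$); the paper offers no proof of it, only the remark that {\rm TBAB}$_3$ is due to Kawamata, and later uses these statements as hypotheses. So there is no paper argument to compare your sketch against, and your proposal, being an outline of a strategy (reduce {\rm BTBAB}$_n$ to {\rm WTBAB}$_n$ plus {\rm BrTBAB}$_n$, prove effective birationality by cutting non-klt centers and lifting sections, with complement/ACC input), is correctly presented as such rather than as a proof. That part of the sketch is consistent with the standard approach and with how the paper itself uses {\rm WTBAB}, {\rm BrTBAB}, {\rm GA} and {\rm LCTB} as inputs.

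However, your framing contains a concrete error: you assert that {\rm TBAB}$_n$ is \emph{not} a formal consequence of {\rm BAB}$_n$ because a terminal Fano variety ``need not be $\epsilon$-klt for any $\epsilon$ depending only on $n$.'' This is false. Terminal means all discrepancies of exceptional divisors are $>0$, hence $>-1+\epsilon$ for every $\epsilon<1$; you are conflating the discrepancy (which for a terminal threefold point of index $r$ can be as small as $1/r$, but is still positive) with the threshold $-1+\epsilon<0$ relevant to $\epsilon$-kltness. Consequently every $n$-dimensional terminal Fano variety is of $\epsilon$-Fano type for, say, $\epsilon=\tfrac12$, so {\rm BAB}$_n$ does formally imply {\rm TBAB}$_n$ (and hence {\rm BTBAB}$_n$, {\rm WTBAB}$_n$, {\rm BrTBAB}$_n$). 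The paper uses exactly this implication: in Remark \ref{rem ga} the pair $(Y,0)$ with $Y$ terminal Fano is treated as a $\tfrac12$-klt log Fano pair, and in the proof of Corollary \ref{cor BBAB} it is stated that {\rm BrTBAB}$_{\leq n-1}$ is implied by {\rm BAB}$_{\leq n-1}$. The genuine reason these terminal-case conjectures are isolated in the paper is not that they escape {\rm BAB}, but that they are expected to be more accessible than the full {\rm BAB} (e.g.\ known in dimension $3$ via classification of terminal singularities) and are exactly the inputs needed for the inductive scheme of Theorem \ref{thm main}; your ``main obstacle'' discussion about upgrading birational to biregular boundedness is reasonable, but the claimed independence from {\rm BAB}$_n$ should be removed.
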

Note that {TBAB}$_{3}$ was  proved by Kawamata \cite{K}.

\begin{conj}\label{conj vb}Fix an integer $n$. 
\begin{enumerate}
\item {\rm ({WTBAB}$_n$)}
There exists a  number $M_0(n)$ depending only
on $n$ such that if  $X$ is an $n$-dimensional $\bQ$-factorial terminal Fano variety of Picard number one, then 
$(-K_X)^n \leq M_0(n).$


\item {\rm ({BrTBAB}$_n$)}
There exists an integer $r_{n}$ depending only
on $n$ such that if  $X$ is an $n$-dimensional  terminal Fano variety, then 
$|-r_{n}K_X|$ gives a birational map.

 \end{enumerate}
\end{conj}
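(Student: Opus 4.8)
Since Conjecture~\ref{conj vb} is stated, not proved, in this paper, what follows is a strategy rather than a complete argument. Both ${\rm WTBAB}_n$ and ${\rm BrTBAB}_n$ are known for $n\le 3$ (by the boundedness results available in dimension $\le 3$, in particular Kawamata's ${\rm TBAB}_3$), and for larger $n$ they are expected to require birational-boundedness input of essentially the same strength as ${\rm BAB}_n$. The plan for each part is the non-klt-centre technique, specialized to the terminal setting.

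For ${\rm WTBAB}_n$, let $X$ be a $\bQ$-factorial terminal Fano variety of Picard number one and set $V:=(-K_X)^n$; suppose $V$ is large. Since $X$ is klt, Kawamata--Viehweg vanishing and Riemann--Roch give $h^0(X,\OO_X(\lfloor -mK_X\rfloor))\sim \tfrac{V}{n!}m^n$, so for a general point $x\in X$ and $m$ comparable to $(n!/V)^{1/n}$ one produces an effective $\bQ$-divisor $D_0\sim_{\bQ}-\lambda_0 K_X$ with $\mult_x D_0>n$ and $\lambda_0\le c(n)V^{-1/n}<1$. Following the now-standard argument (as in the author's treatment of ${\rm WBAB}_{\le 3}$ and earlier work of Kollár and Todorov): by tie-breaking together with a perturbation using $\bQ$-factoriality and the fact that $N^1(X)_\bQ$ is generated by $-K_X$, replace $D_0$ by $D\sim_{\bQ}-\lambda K_X$ with $\lambda<1$ such that $(X,D)$ has a minimal non-klt centre $W\ni x$ and is klt on a punctured neighbourhood of $W$. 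Kawamata subadjunction then endows $W$ with a klt pair $(W,D_W)$ and a polarization of controlled degree; adding further general members of $|-mK_X|$ and iterating, one eventually contradicts ${\rm BAB}_{<n}$ (or ${\rm WBAB}_{<n}$) applied to $W$, or terminality of $X$. Turning this into an explicit bound $M_0(n)$ is the technical heart of this part.

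For ${\rm BrTBAB}_n$, one wants a uniform $r_n$ for which $|-r_nK_X|$ separates two general points $x,y$ of an arbitrary $n$-dimensional terminal Fano $X$. The plan is the Angehrn--Siu/Nadel-vanishing scheme: build an effective $\bQ$-divisor $D\sim_{\bQ}-\lambda K_X$ with $\lambda$ bounded in terms of $\Vol(X,-K_X)$ whose non-klt locus is exactly $\{x\}$ while $y\notin\Nklt(X,D)$; when the minimal non-klt centre through $x$ is positive-dimensional one restricts to it, runs subadjunction, and descends in dimension with ${\rm WBAB}_{<n}$ (and ${\rm BrTBAB}_{<n}$) as inputs. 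Nadel vanishing for the multiplier ideal $\III$ of $(X,D)$ applied to a suitable twist of $K_X$ then forces $H^0(X,\OO_X(-r_nK_X))$ to surject onto the separating quotient once $r_n$ exceeds $\lambda$ by a bounded amount. The required control of $\lambda$ reduces, via a small $\bQ$-factorialization (which leaves $\Vol(X,-K_X)$ unchanged), to an upper bound on $\Vol(X,-K_X)$ over arbitrary Picard number -- i.e.\ ${\rm WTBAB}_n$ upgraded by MMP -- together with the positive lower bound on the anti-canonical volume of terminal Fano varieties.

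The main obstacle in both parts is the same and genuine: in the inductive step one restricts to a non-klt centre $W$ and applies subadjunction, but the different $D_W$ produced on $W$ can carry coefficients arbitrarily close to $1$, so the klt constant degrades along the induction and no fixed $\epsilon$ survives. Stabilizing this constant -- equivalently, producing uniform singularity control for the centres that appear -- is precisely what ${\rm BAB}_n$-type statements supply, which is why Conjecture~\ref{conj vb} remains open for $n\ge 4$ despite being a theorem in dimension $\le 3$.
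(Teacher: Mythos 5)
The statement you were asked about is not proved in the paper at all: it is Conjecture \ref{conj vb}, introduced only as a hypothesis (BrTBAB$_{n-m}$ enters Theorem \ref{thm main} through the integer $r_{n-m}$, and WTBAB$_n$ enters Corollary \ref{cor WBAB}), with the paper merely remarking that in dimension three one may take $M_0(3)=64$ and $r_3=97$ by \cite{ProkK, CJ}, these results resting on the classification of terminal singularities. So there is no proof in the paper to compare yours against, and you were right to present a strategy rather than claim a proof. Your sketch is the standard one (volume large $\Rightarrow$ highly singular $\bQ$-divisor $D\sim_\bQ-\lambda K_X$ through a general point, tie-breaking, minimal non-klt centres, Kawamata subadjunction, Nadel/Kawamata--Viehweg vanishing, induction on the dimension of the centre), and the obstacle you single out -- the loss of any uniform klt constant on the centres produced by subadjunction, which is exactly the kind of control that BAB-type statements are meant to supply -- is the genuine reason these statements are open for $n\geq 4$; this matches the paper's own remark that such boundedness is expected but unknown beyond dimension three. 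One small correction: Kawamata's TBAB$_3$ concerns $\bQ$-factorial terminal Fano threefolds of Picard number one, so it gives WTBAB$_3$ and BTBAB$_3$ directly but does not by itself give BrTBAB$_3$, which is a statement about arbitrary terminal Fano threefolds; for that one needs the boundedness of canonical (or terminal) $\bQ$-Fano threefolds \cite{KMMT} or the effective results \cite{ProkK, CJ} that the paper cites. Also note that in the paper's three-dimensional applications BrTBAB is only invoked for the general fibre, i.e.\ in dimension $n-m\leq 2$, where it is classical, so the heavy dimension-three input is not actually needed there.
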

Note that boundedness of the family naturally implies boundedness of anti-canonical volumes  and birationality by generic flatness and Noetherian induction. It is easy to see that $M_0(1)=2$, $M_0(2)=9$, $r_1=1$, and $r_2=3$. Some effective results were obtained, for example, in \cite{ProkK, CJ} which show that we may take $M_0(3)=64$ and $r_3=97$. However, all the boundedness results for terminal Fano $3$-folds heavily rely on classification of terminal singularities in dimension three. So it is interesting to develop methods not depending on classification and work for higher dimension.

Another interesting invariant is $\alpha$-invariant (see Subsection \ref{section notation} for definition). It measures the singularities of log Fano pairs.
In \cite{Jiang}, we formulated the following conjecture on lower bound of $\alpha$-invariants.
\begin{conj}[{GA}$_n$]\label{gac}
Fix an integer $n>0$ and $0<\epsilon<1$. 
Then there exists a   number $\mu(n,\epsilon)>0$ depending only on $n$ and $\epsilon$ with the following property:

 If $(X, B)$  is an  $\epsilon$-klt log Fano pair and $X$ is an $n$-dimensional  terminal Fano variety, then
$\alpha(X,B)\geq \mu(n,\epsilon).$
\end{conj}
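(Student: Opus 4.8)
The natural route to GA$_n$ is to deduce it from boundedness, in fact from BAB$_n$. Recall that $\alpha(X,B)$ is the supremum of those $t\ge 0$ for which $(X,B+tD)$ is log canonical for every effective $\bQ$-divisor $D\sim_{\bQ}-(K_X+B)$. Writing such a $D$ as $\frac1N D'$ with $D'$ integral and using the scaling identity $\lct(X,B;D)=N\cdot\lct(X,B;D')$, one has $\alpha(X,B)=\inf_N\,N\cdot\min\{\lct(X,B;D'):D'\in|-N(K_X+B)|\}$, the infimum taken over all sufficiently divisible positive integers $N$. Thus GA$_n$ has two faces: a ``bounded family'' part, namely controlling the pairs $(X,B)$, and an ``unbounded denominator'' part, namely the quantifier over all $N$. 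I would treat the first with BAB$_n$ together with standard semicontinuity, and reduce the second to boundedness of complements.

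For the first part: assuming BAB$_n$, the underlying terminal Fano $n$-folds $X$ form a bounded family, say $\pi\colon\mathcal{X}\to T$, and we may fix a relatively very ample $\HH$ with $\HH^n$ and $(-K_X)\cdot\HH^{n-1}$ bounded on the fibres. Ampleness of $-(K_X+B)$ gives $B\cdot\HH^{n-1}<(-K_X)\cdot\HH^{n-1}$, so $\Supp B$ varies in a bounded family of cycles; $\epsilon$-kltness forces every coefficient of $B$ to be $<1-\epsilon$. Hence the pairs $(X,B)$ are log bounded (that the coefficients of $B$ need not lie in a DCC set is a technical nuisance, dealt with by a standard argument separating off the components of $B$ of small coefficient, whose contribution to the relevant log canonical thresholds is controlled). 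Next fix a bounded, sufficiently divisible $m=m(n,\epsilon)$ so that on this family $-m(K_{\mathcal{X}}+\mathcal{B})$ is Cartier and relatively base point free, and form the universal anticanonical member $\mathcal{D}\subset\PPP_T\bigl(\pi_*\OO_{\mathcal{X}}(-m(K_{\mathcal{X}}+\mathcal{B}))\bigr)$. By lower semicontinuity of the log canonical threshold over this parameter space of finite type, combined with the ACC for log canonical thresholds of Hacon--McKernan--Xu, there is a uniform $c_0=c_0(n,\epsilon)>0$ with $\lct(X,B;D')\ge c_0$ for every integral $D'\in|-m(K_X+B)|$ and every member of the family.

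The genuinely hard point is the second part: reducing the denominator. What is needed is a bounded, $m$-divisible $N=N(n,\epsilon)$ and a dimensional constant $C$ with $\alpha(X,B)\ge\frac1C\cdot N\cdot\min\{\lct(X,B;D'):D'\in|-N(K_X+B)|\}$. This is, in essence, the statement that $\epsilon$-lc log Fano $n$-folds admit bounded $N$-complements, equivalently that on a bounded family of log Fano pairs the log canonical thresholds of all members of all anticanonical $\bQ$-linear systems are uniformly bounded below; in valuative terms it is the assertion that $T_{X,B}(v)/A_{X,B}(v)$ is uniformly bounded on bounded families, where $T_{X,B}(v)$ denotes the pseudoeffective threshold of $-(K_X+B)$ along $v$. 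It lies as deep as BAB$_n$ itself and is not available in dimension $\ge 3$. Heuristically $\alpha(X,B)\gtrsim\bigl((-(K_X+B))\cdot\HH^{n-1}\bigr)^{-1}$, from the bound $\mult_x D\le D\cdot\HH^{n-1}$ and the inequality $\lct\gtrsim 1/\mult$; but promoting this heuristic to a proof requires controlling non-divisorial non-klt centres and the interaction with $B$, which is exactly where the obstruction sits. Granting the displayed reduction and applying the previous paragraph with $N$ in place of $m$ yields $\alpha(X,B)\ge c_0(n,\epsilon)/C=:\mu(n,\epsilon)>0$, which is GA$_n$.

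Finally, for $n\le 2$ this programme is unconditional: terminal Fano surfaces are del Pezzo surfaces, which form an explicit bounded family with $(-K_X)^2\in\{1,\dots,9\}$; $B$ has bounded $(-K_X)$-degree; the relevant log canonical thresholds live on a finite-dimensional parameter space; and the denominator reduction can be carried out by hand on each del Pezzo surface. This gives GA$_2$, together with the trivial GA$_1$ (for which $\mu(1,\epsilon)=\epsilon/2$ works), which are precisely the cases used in the unconditional results BBAB$_3$ and WBAB$_3$ of this paper.
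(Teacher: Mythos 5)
The statement you are trying to prove is stated in the paper as an open conjecture: no proof of GA$_n$ is given there (only GA$_2$ is known, cited from \cite{Jiang}, where it is proved by a delicate blow-up/multiplicity analysis in the spirit of Theorem \ref{thm surface}, not by a boundedness argument). Moreover the paper explicitly warns against exactly your strategy: it remarks that GA$_n$ is not likely to be implied by BAB$_n$ since the boundary $B$ is involved, and that it is not even clear for a \emph{fixed} $X$ and all possible boundaries $B$. Your proposal has two concrete gaps at precisely this point. First, the pairs $(X,B)$ are not log bounded: since no DCC or denominator condition is imposed on the coefficients of $B$, the bound $B\cdot\HH^{n-1}<(-K_X)\cdot\HH^{n-1}$ does not bound the degree of $\Supp B$ (components with tiny coefficients may have arbitrarily large degree), and there is no bounded $m(n,\epsilon)$ making $m(K_X+B)$ Cartier, so the universal member $\mathcal{D}$ over a finite-type base and the semicontinuity/ACC step do not exist. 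Discarding the small-coefficient components is not a harmless ``technical nuisance'': the log canonical threshold is taken with respect to the pair $(X,B)$, removing part of $B$ only increases it (so gives bounds in the wrong direction for the remaining pair while changing the linear system $-(K_X+B)$ itself), and the divisor $G\sim_{\bQ}-(K_X+B)$ one must test also moves with $B$; note also that the paper's $\alpha(X,B)$ is defined via the generalized threshold over all $G$ with $G+B\geq 0$, not only over effective members of pluri-anticanonical linear systems.

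Second, you concede that the ``denominator reduction'' (boundedness of complements / uniform lower bound for thresholds over all $N$) is not available for $n\geq 3$ and is as deep as BAB$_n$; with that admission the proposal proves nothing in the only range where GA$_n$ is open, and it is in any case circular relative to this paper's programme, where GA$_{\leq n-1}$ is an \emph{input} (via Theorem \ref{thm volume} and Corollaries \ref{cor BBAB}, \ref{cor WBAB}) toward birational boundedness, so deducing GA from BAB defeats the purpose. The claim that the $n=2$ case ``can be carried out by hand on each del Pezzo surface'' also underestimates the problem: even for a fixed surface the quantifier runs over all boundaries $B$ with arbitrary coefficients and all $G\sim_{\bQ}-(K_X+B)$ with unbounded denominators, which is not a finite-dimensional check; this is why \cite{Jiang} proves GA$_2$ by a direct analysis of multiplicities under sequences of point blow-ups rather than by boundedness.
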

Very recently, GA$_{2}$ was proved by the author \cite{Jiang} under general setting when $X$ itself need not to be Fano. However, GA$_{n}$ is not likely to be implied  by BAB$_n$ since the boundary $B$ is involved. It is even not clear if GA$_{n}$ is true for a fixed variety $X$ (and all possible boundaries $B$).

We also propose the following conjecture, which states that the log canonical threshold of the boundary is bounded from below uniformly. 
\begin{conj}[{LCTB}$_n$]\label{conj LCT}
Fix integers $n>0$ and $d>0$, a rational number $a\geq 0$, $0<\epsilon<1$. Then there exists a number  $\lambda(n,d, a, \epsilon)>0$ depending only on $n$, $d$, $a$, and  $\epsilon$, satisfying the following property:

If $(f:X\to Z, B, H)$ is an almost-extremal $(n,d, a,\epsilon)$-Fano fibration, then $(X, (1+t)B)$ is klt for $0<t\leq \lambda(n,d, a, \epsilon)$.
\end{conj}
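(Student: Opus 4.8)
The plan is to reduce the statement to a uniform \emph{local} log canonical threshold bound and then to extract that bound from a short discrepancy estimate combined with boundedness. Unwinding Definition~\ref{ndae}, a good $(2,d,a,\epsilon)$-Fano fibration $(f\colon X\to Z,B,H)$ consists of a normal projective surface $X$ with an $\epsilon$-klt pair $(X,B)$, a Mori fibration $f$ with $\dim Z\le 1$ and $-(K_X+B)$ $f$-ample, and numerical data $d,a,H$ bounding the degree of a general fibre and the base. Since being klt can be checked locally and $\Supp B$ is proper, it suffices to produce $\lambda=\lambda(2,d,a,\epsilon)>0$ with $\lct_p(X,0;B)>1+\lambda$ for every closed point $p\in X$; then $(X,(1+t)B)$ is klt for $0<t\le\lambda$. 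As preparation I would record the boundedness inputs: for $\dim Z=0$, $X$ is of $\epsilon$-Fano type, hence bounded by {\rm BAB}$_2$ \cite{AK2,AM}, and the degree of $B$ with respect to a fixed very ample divisor is bounded because $-(K_X+B)$ is ample; for $\dim Z=1$, $f$ is a surface Mori fibre space and $-(K_X+B)$ is $f$-ample, so for the fibre $F$ through $p$ the quantity $B\cdot F=(-K_X)\cdot F-(-(K_X+B))\cdot F$ is bounded in terms of $d$, which already bounds the local multiplicity of $B$ at $p$; in all cases the coefficients of $B$ are $<1-\epsilon$.

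The heart of the matter is then a discrepancy estimate. Fix $p$, put $c:=\lct_p(X,0;B)$, and let $E_0$ be a divisor over $p$ computing $c$. Writing $A_0:=a(E_0;X,0)+1$ for its log discrepancy, we have $\mathrm{ord}_{E_0}B>0$ and $c=A_0/\mathrm{ord}_{E_0}B$; moreover $\epsilon$-kltness of $(X,B)$ applied to $E_0$ gives $a(E_0;X,0)-\mathrm{ord}_{E_0}B>-1+\epsilon$, i.e. $A_0-\mathrm{ord}_{E_0}B>\epsilon$, whence
\[
c \;=\; \frac{A_0}{\mathrm{ord}_{E_0}B} \;>\; \frac{\mathrm{ord}_{E_0}B+\epsilon}{\mathrm{ord}_{E_0}B} \;=\; 1+\frac{\epsilon}{\mathrm{ord}_{E_0}B} \;\ge\; 1+\frac{\epsilon}{A_0-\epsilon}.
\]
Thus the whole problem is reduced to a uniform upper bound $A_0\le N(2,d,a,\epsilon)$ for the log discrepancy of the threshold-computing valuation, after which $\lambda:=\epsilon/N$ works.

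Bounding $A_0$ uniformly is, I expect, the main obstacle: $\epsilon$-kltness by itself does not keep $\lct(X,0;B)$ away from $1$ (ACC for log canonical thresholds does not forbid sequences decreasing to $1$), so boundedness and the fibration structure must really enter here. I would argue in two regimes. If $\Supp B$ is locally of bounded complexity at the centre of $E_0$, then, since the germ $(X,p)$ is a klt surface singularity — whose minimal resolution contributes only exceptional log discrepancies in $(0,1]$ — and $\Supp B$ needs only a bounded number of further blow-ups to be resolved, $E_0$ occurs on a resolution all of whose exceptional log discrepancies are bounded, so $A_0$ is bounded. If instead $E_0$ is a ``deep'' valuation, then $\Supp B$ has large local complexity at its centre, which forces some component $C_j$ of $B$ to have large degree on the fibre through $p$ (or large degree, when $\dim Z=0$); since $B\cdot F$ (resp. $\deg B$) is bounded, the coefficient $b_j$ must be correspondingly small, and a quantitative form of this trade-off shows $\mathrm{ord}_{E_0}B$ is small relative to $A_0$, so that $c=A_0/\mathrm{ord}_{E_0}B$ is in fact large and a fortiori $\ge 1+\lambda$. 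Combining the two regimes yields the uniform bound $N$ and hence the theorem. (If the definition of a Fano fibration is taken to include bounded denominators for $B$ and $Z$ is bounded, a cleaner variant is available: the pairs $(X,B)$ then form a bounded family, on which $\lct(X,0;B)$ takes only finitely many values by Noetherianity and lower semicontinuity, and each is $>1$ by $\epsilon$-kltness, so is bounded below by the minimum; I would use whichever form is cleanest given the precise hypotheses.)
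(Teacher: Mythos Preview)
Your reduction to a uniform lower bound on $c:=\lct_p(X,0;B)$ via the inequality
\[
c \;=\; \frac{A_0}{\mathrm{ord}_{E_0}B} \;>\; 1+\frac{\epsilon}{A_0-\epsilon}
\]
is correct, and you are right that everything hinges on a uniform upper bound for $A_0$ (equivalently for $\mathrm{ord}_{E_0}B$) at an lct-computing valuation. But the argument you offer for this bound is the gap. Neither of your two ``regimes'' is made precise, and in fact both are problematic. In the first regime, the assertion that ``$E_0$ occurs on a resolution all of whose exceptional log discrepancies are bounded'' is not justified: an lct-computing divisor need not appear on a fixed log resolution of $(X,\Supp B)$; on a smooth surface it is extracted by a tower of point blow-ups whose length is exactly the quantity you are trying to bound, and ``bounded local complexity of $\Supp B$'' (even if made precise) does not by itself control that length. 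In the second regime, ``deep'' $E_0$ does not force any single component of $B$ to have large fibre degree in a way that makes $\mathrm{ord}_{E_0}B$ small relative to $A_0$; the order of $B$ along $E_0$ is a weighted sum over all components, and the trade-off you invoke is precisely the content of the theorem, not a lemma one can quote. Your parenthetical fallback via bounded families fails for the reason the paper stresses: the coefficients of $B$ carry no DCC or bounded-denominator hypothesis, so the pairs $(X,B)$ do not form a bounded family in any sense strong enough to make the lct take finitely many values.

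For comparison, the paper's proof does exactly the hard work you are gesturing at, but by a different and genuinely quantitative mechanism. It first establishes a general surface statement (Theorem~\ref{thm surface}): on a smooth surface $T$ with $(T,B)$ $\epsilon$-klt, bounds on $\sum_i b_i$, $(B)^2$, $B\cdot G$, and $\sup_Q \mult_Q B$ force $\lct(T,0;B)>1+\lambda'(m,\epsilon)$. The proof builds the tower of blow-ups at successive non-klt centers of $(T,(1+t)B)$ and uses an adjunction/genus computation to bound $(B_s^i)^2$ from below; comparing with $(B_s)^2=(B)^2-\sum_l(\mult_{P_{l-1}}B_{l-1})^2$ yields an explicit bound $s<32m/\epsilon^3$ on the number of blow-ups at which the multiplicity of $B$ stays $\ge \epsilon/2$, and a separate discrepancy comparison (Lemma~\ref{multi}) then shows $\mult_{Q_0}(tB)\ge \epsilon^2/2^{s+2}$ at some point $Q_0\in T$, giving $t>\epsilon^2/(2^{s+2}m)$. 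LCTB$_2$ is then deduced by verifying the numerical hypotheses of Theorem~\ref{thm surface} in the two cases $\dim Z=0$ (del Pezzo) and $\dim Z=1$ (conic bundle), the latter using the ``good'' condition~(5) to control the vertical part of $B$ via a connectedness argument (Claim~\ref{claim bi}). This tower-length bound via intersection theory is the missing idea in your proposal; without something of that strength, the step ``bound $A_0$'' does not go through.

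Two minor points on your setup: in a good $(2,d,a,\epsilon)$-Fano fibration the morphism $f$ is only a Fano fibration, not a Mori fibration, and the defining positivity is $-(K_X+B)\sim_\bQ A-af^*(H)$ with $A$ globally ample (not $-(K_X+B)$ $f$-ample); also $X$ is terminal, hence smooth in dimension~$2$, which is why the paper may work on a smooth surface from the outset.
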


This conjecture is totally new, and even {LCTB}$_2$ is unknown before. 
It is somehow very technical, but very important in this paper.
We will prove {LCTB}$_2$ in this paper (Theorem \ref{thm LCTB2}).

It is interesting to make a comparison between GA and LCTB. In both conjectures, we are trying to measure the singularities of the pairs by log canonical thresholds of certain divisors. But they go to two different directions. In GA, we consider divisor $G\sim_\bQ -(K_X+B)$, which is ample, but we need to consider all such divisors. In LCTB, we consider only  the boundary, but without any positivity.

We also expect the following conjecture on the images of varieties of $\epsilon$-Fano type via Mori fibrations.
\begin{conj}[${\rm S}_{n}$]
Fix an integer $n>0$ and $0<\epsilon<1$. Then there exists a number $\delta(n, \epsilon)>0$ depending only on $n$ and $\epsilon$ such that if $X$ is an $n$-dimensional variety  of $\epsilon$-Fano type with a Mori fibration $X\to Z$, then $Z$ is of $\delta(n, \epsilon)$-Fano type.
\end{conj}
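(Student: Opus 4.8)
The plan is to run the canonical bundle formula for the Mori fibration $f\colon X\to Z$ and then make both of its ingredients --- the discriminant part and the moduli part --- effective in terms of $n$ and $\epsilon$, so as to exhibit $Z$ as being of $\delta(n,\epsilon)$-Fano type. Fix an effective $\bQ$-divisor $B$ with $(X,B)$ an $\epsilon$-klt log Fano pair; in particular $(X,B)$ is klt and $-(K_X+B)$ is ample. If $\dim Z=0$ there is nothing to prove. Since a klt log Fano variety is rationally connected, $X$ and hence $Z$ is rationally connected; in particular if $\dim Z=1$ then $Z\cong\bP^1$, which is of Fano type. So we may assume $2\le\dim Z=m\le n-1$, and over the generic point $\eta$ of $Z$ the fibre $(X_\eta,B_\eta)$ is an $\epsilon$-klt log Fano pair of dimension $n-m$, to which boundedness in dimension $n-m$ will be applied.

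Next, apply the canonical bundle formula of Ambro and Fujino--Mori to $(X,B)\to Z$. After replacing $Z$ by a suitable higher birational model over which the moduli part is nef --- harmless, since being of $\delta$-Fano type descends along birational contractions --- we obtain an effective $\bQ$-divisor $B_Z$ and a nef $\bQ$-divisor $M_Z$ with
\[
K_X+B\sim_\bQ f^*(K_Z+B_Z+M_Z)
\]
and $(Z,B_Z)$ klt. Since $f$ is surjective with connected fibres and $-(K_X+B)$ is ample, $-(K_Z+B_Z+M_Z)$ is ample, and hence so is $-(K_Z+B_Z)=-(K_Z+B_Z+M_Z)+M_Z$; therefore $(Z,B_Z)$ is a klt log Fano pair and $Z$ is of Fano type. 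It remains to bound the singularities of $(Z,B_Z)$ from below by some $\delta(n,\epsilon)>0$.

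For a prime divisor $P\subset Z$, the coefficient of $P$ in $B_Z$ is $1-c_P$, where $c_P$ is the log canonical threshold of $f^*P$ along the generic point of $P$ with respect to $(X,B)$; since $(X,B)$ is $\epsilon$-klt one has $c_P\ge\epsilon/N_0$, where $N_0$ bounds the multiplicities $\mult_D(f^*P)$ over the components $D$ of the fibre of $f$ over the generic point of $P$. That such a bound $N_0=N_0(n,\epsilon)$ exists is itself a boundedness statement: ampleness of $-(K_X+B)$ on the fibres together with $\epsilon$-kltness forces the $(n-m)$-dimensional fibres of $f$, and their degenerations in codimension one, to vary in a bounded family. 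Granting this, the coefficients of $B_Z$ are at most $1-\epsilon/N_0$. To strengthen the kltness of $(Z,B_Z)$ to $\delta(n,\epsilon)$-kltness one then invokes effective adjunction: the singularities of $(Z,B_Z)$ are controlled by those of $(X,B)$ up to a constant depending only on $n$, $\epsilon$, the bound $N_0$, and a bound on the size of the moduli part $M_Z$. Taking $\Theta=B_Z$, this exhibits $Z$ as being of $\delta(n,\epsilon)$-Fano type.

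The main obstacle is precisely the effective input of the previous step: the a priori bound $N_0$ on the vertical multiplicities, and --- more seriously --- a bound on the moduli part $M_Z$ strong enough for the descent of singularities, namely that a controlled multiple of $M_Z$ is base-point-free of bounded volume (effective $b$-semiampleness of the moduli $b$-divisor). Both reduce to boundedness of $\epsilon$-klt log Fano pairs of dimension $n-m$ in families with bounded polarization. For $n-m\le 2$ this is available from Alexeev's boundedness theorem for log del Pezzo surfaces together with the explicit canonical bundle formula for fibrations by curves and by surfaces --- for a conic bundle the vertical multiplicities are at most $2$ and the moduli part can be controlled directly --- so the argument yields ${\rm S}_2$ and ${\rm S}_3$ unconditionally; in general, ${\rm S}_n$ would follow from the corresponding lower-dimensional instances of ${\rm BAB}$ together with effective adjunction.
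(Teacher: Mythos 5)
This statement is a \emph{conjecture} in the paper, not a theorem: the paper never proves ${\rm S}_n$ in general, it only records that ${\rm S}_n$ is a consequence of Shokurov's conjecture and that Birkar proved it unconditionally when $\dim X-\dim Z=1$; the only case the paper actually verifies is ${\rm S}_3$ (inside the proof of Corollaries \ref{cor BBAB3} and \ref{cor WBAB3}), and it does so by quoting Birkar's result to get $\Delta'$ with $(Z,\Delta')$ $\delta$-klt and $K_Z+\Delta'\sim_\bQ 0$, quoting Fujino--Gongyo to get $\Delta$ with $(Z,\Delta)$ klt log Fano, and taking a convex combination $(1-t)\Delta'+t\Delta$. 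Your proposal, by contrast, claims a general argument, but the two inputs you yourself isolate at the end --- the uniform bound $N_0$ on vertical multiplicities and, more seriously, effective $b$-semiampleness of the moduli part strong enough to transfer $\epsilon$-kltness of $(X,B)$ into $\delta$-kltness of the base pair --- are precisely the hard content of Shokurov's conjecture/effective adjunction, which is open in general. Asserting that these ``reduce to boundedness of $\epsilon$-klt log Fano pairs of dimension $n-m$'' is not a proof and is not even an accurate reduction: bounding the family of generic fibres does not by itself control the fibres over codimension-one points of $Z$, nor the moduli $b$-divisor.

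There are also concrete errors in the steps you do write out. The canonical bundle formula of Ambro and Fujino--Mori requires $K_X+B\sim_\bQ 0$ over $Z$ (indeed $K_X+B\sim_\bQ f^*(\cdot)$), which fails here because $-(K_X+B)$ is ample; one must first replace $B$ by $B+A$ with $A\sim_\bQ -(K_X+B)-f^*(\text{ample})$ general, so your displayed formula is false as stated (fixable, but it is the formula your whole argument rests on). Bounding the coefficients of $B_Z$ away from $1$ by $\epsilon/N_0$ does not give $\delta$-kltness of $(Z,B_Z)$: one needs lower bounds on the discrepancies of \emph{all} divisors over $Z$, i.e.\ control of the discriminant $b$-divisor on all birational models, which is again the content of Birkar's theorem rather than a consequence of ``effective adjunction up to a constant'' as you assert. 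Similarly, the quantitative descent of $\delta$-kltness from the auxiliary higher model of $Z$ (where $M_Z$ is nef) back to the original $Z$ is claimed but not justified --- being of Fano type descends along birational contractions, but the explicit $\delta$ does not descend for free. Finally, even the cases you claim to obtain unconditionally (${\rm S}_2$, ${\rm S}_3$) are only sketched; for ${\rm S}_3$ the paper's route is simply to cite Birkar's relative-dimension-one theorem, which is what your sketch would amount to once the missing details were supplied.
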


Note that ${\rm S}_{n}$ is a special consequence of Shokurov's conjecture in Birkar \cite{Birkar}, where he proved ${\rm S}_{n}$ for the case $\dim X-\dim Z=1$ unconditionally.

Finally, as a trivial remark, all conjectures mentioned above hold for $n=1$.
Also, all conjectures are known to be true for $n=2$ except {LCTB}$_2$.

It is worth to mention that, very recently, Birkar \cite{Birkar1, Birkar2} treated several conjectures above and claimed a proof of BAB conjecture using different but much stronger technique.

\subsection{Sketch of the proof}
We explain the idea of the proof of Corollary \ref{cor BBAB}. By the idea of \cite{Jiang}, maybe well known to experts, to prove  birational boundedness of varieties of $\epsilon$-Fano type, it is enough to consider those with a Mori fibration structure, see Theorem \ref{thm MFS}. 

Now we consider an $\epsilon$-klt log Fano pair $(X,B)$ with a Mori fibration $f:X\to Z$. 

To prove the birational boundedness of $X$, as a well known strategy, it suffices to find a Weil divisor $D$ on $X$ such that $|D|$ gives a birational map on $X$, and the volume of $D$ is bounded from above uniformly (cf. \cite[Lemma 2.4.2(2)]{HMX13}). 

A natural candidate of this divisor $D$ is $-mK_X$,  the pluri-anti-canonical divisor. 
For example, to prove the birational boundedness of smooth projective varieties of general type, pluri-canonical divisor $mK_X$ is considered, cf. \cite{HM, Takayama, Tsuji}. But the behavior of anti-canonical divisors is totally different from canonical divisors, for example,  after taking higher models, the bigness of anti-canonical divisors is not preserved. Another candidate is the adjoint pluri-anti-canonical divisor $-m(K_X+B)$. For example, to prove the birational boundedness of log canonical pairs of general type, adjoint pluri-canonical divisor $m(K_X+B)$ is considered, cf. \cite{HMX14}. However, to deal with log canonical pairs of general type, one need always to assume some good condition (DCC) on the coefficients of $B$, on the other hand, for log Fano pairs, we should not assume any condition on the coefficients of $B$, which makes the problem more complicated.
In fact, it is somehow very difficult even to find a uniform number $m$ such that $|-mK_X|$ or $|-m(K_X+B)|$ is nonempty.

Our idea is to make use of the Mori fibration structure $f:X\to Z$, assuming that it is not trivial, i.e., $\dim Z>0$. In this case, we may find a very ample divisor $H$ on $Z$. The degree of $H$ is bounded if $Z$ is bounded, which is guaranteed by assuming S$_n$ and BAB$_{\leq n-1}$. Then we may consider the divisor $-rK_X+mf^*(H)$ instead of $-mK_X$ for some fixed positive integer $r$. 

Note that $-rK_X+mf^*(H)$ is ample for $m$ sufficiently large. If $-rK_X+mf^*(H)$ is ample for some uniform $m$, then by vanishing theorem, it is easy to lift sections from a general member $X_1$ in $|f^*(H)|$ to $X$. This allows us to cut down the dimension of $Z$ by $H$ and reduce to the case when $Z$ is a point, see Lemma \ref{lemma birational}. But one difficulty appears here, that is, $(X_1, B|_{X_1})$ is no long log Fano. Hence for induction, we need to deal with a larger category of varieties. This is why we define the concept of $(n, d, a,\epsilon)$-Fano fibrations which is a generalization of $\epsilon$-klt log Fano pairs with a Mori fibration.  
The induction step works well for this larger category.

Then we need to find a uniform number $L$ such that $-K_X+Lf^*(H)$ is ample. To this end, for a curve $C$ generating an extremal ray of  $\overline{NE}(X)$, we need to give a lower bound for $-K_X\cdot C$. Note that 
$$
-K_X\cdot C> \frac{1}{t}(K_X+(1+t)B)\cdot C,
$$
by length of extremal rays, it suffices to show that there exists a uniform $t>0$ such that $(X,(1+t)B)$ is klt. The existence of such $t$ is implied by LCTB$_n$. However, since we only need to consider those $C$ not contracted by $f$, by using length of extremal rays in a tricky way, it turns out that LCTB$_{n-1}$ is sufficient, see Lemma \ref{lemma ample}.

Finally, if $|-rK_X+mf^*(H)|$ gives a birational map on $X$ for fixed $r$ and $m$, we need to bound its volume. The idea is basically the same with \cite{Jiang}. According to the volume, we may construct non-klt centers, and restrict on the general fiber of $f$. By considering the lower bound of $\alpha$-invariants on the general fiber (GA$_{\leq n-1}$), we can get the upper bound of volumes, see Theorem \ref{thm volume}.

\bigskip
{\it Acknowledgments.} The author would like to thank Professor Chenyang Xu for suggesting this problem and discussion.  
The author is indebted to Professor Yujiro Kawamata  for effective conversations.


\section{Preliminaries}
\subsection{Notation and conventions}\label{section notation}
 We adopt the standard notation and definitions in \cite{KMM} and \cite{KM}, and will freely use them.

A {\it pair} $(X, B)$ consists of  a normal projective variety $X$ and an effective
$\mathbb{Q}$-divisor $B$ on $X$ such that
$K_X+B$ is $\mathbb{Q}$-Cartier.   

The variety $X$ is called a
\emph{Fano variety} if $-K_X$ is ample. The pair $(X, B)$ is called a
\emph{log Fano pair} if $-(K_X+B)$ is ample.

Let $f: Y\rightarrow X$ be a log
resolution of the pair $(X, B)$, write
$$
K_Y =f^*(K_X+B)+\sum a_iF_i,
$$
where $\{F_i\}$ are distinct prime divisors. The coefficient $a_i$ is called the {\it discrepancy} of $F_i$ with respect to $(X, B)$, and denoted by $a_{F_i}(X, B)$. For some $\epsilon \in [0,1]$, the
pair $(X,B)$ is called
\begin{itemize}
\item[(a)] \emph{$\epsilon$-kawamata log terminal} (\emph{$\epsilon$-klt},
for short) if $a_i> -1+\epsilon$ for all $i$;

\item[(b)] \emph{$\epsilon$-log canonical} (\emph{$\epsilon$-lc}, for
short) if $a_i\geq  -1+\epsilon$ for all $i$;

\item[(c)] \emph{terminal} if  $a_i> 0$ for all $f$-exceptional divisors $F_i$ and all $f$. 
\end{itemize}
Usually we write $X$ instead of $(X,0)$ in the case $B=0$.
Note that $0$-klt (resp. $0$-lc) is just klt (resp. lc) in the usual sense. 
   $F_i$ is called a {\it non-klt place}  (resp. {\it non-lc place}) of $(X, B)$  if $a_i\leq -1$ (resp. $<-1$).
A subvariety $V\subset X$ is called a {\it non-klt center} (resp. {\it non-lc center}) of $(X, B)$ if it is the image of a non-klt place (resp. non-lc place). The {\it non-klt locus} $\text{Nklt}(X, B)$ is the union of all  non-klt centers of $(X, B)$. The {\it non-lc locus} $\text{Nlc}(X, B)$ is the union of all  non-lc centers of $(X, B)$. 

In particular, 
a normal projective variety $X$ is \emph{of $\epsilon$-Fano type} if there exists an effective $\bQ$-divisor $B$ such that $(X, B)$ is an $\epsilon$-klt log Fano pair.

Let $(X, B)$ be  an lc pair and $D\geq 0$ be a $\bQ$-Cartier $\bQ$-divisor. The
{\it log canonical threshold} of $D$ with respect to $(X, B)$ is
$$\lct(X, B; D) = \sup\{t\in \bQ \mid (X, B+ tD) \text{ is lc}\}.$$
For application, we need to consider the case when $D$ is not effective. 
Let $G$ be a $\bQ$-Cartier $\bQ$-divisor satisfying $G+B\geq 0$, the
{\it unusual log canonical threshold} of $G$ with respect to $(X, B)$ is
$$\ulct(X, B; G) = \sup\{t\in [0,1] \cap \bQ \mid (X, B+ tG) \text{ is lc}\}.$$
Note that the assumption $t\in [0,1]$ guarantees that $B+ tG\geq 0$.

If  $(X, B)$ is an lc log Fano pair,  the {\it  (unusual)  $\alpha$-invariant} of $(X, B)$ is defined by 
$$
\alpha(X, B)=\inf\{\ulct(X,B;G)\mid G\sim_\bQ-(K_X+B), G+B\geq 0\}.
$$

A collection of varieties $\{X_t\}_{t\in T}$ is
said to be \emph{bounded} (resp.  \emph{birationally bounded}) if there exists $h:\mathcal{X}\rightarrow
S$ a projective morphism between schemes of finite type such that 
each $X_t$  is isomorphic (resp. birational) to $\mathcal{X}_s$ for some $s\in S$.

\subsection{$(n, d, a, \epsilon)$-Fano fibrations}
We define Fano fibrations, Mori fibrations, and $(n, d, a, \epsilon)$-Fano fibrations.
\begin{definition}A projective morphism $f:X\to Z$ between normal projective varieties is called a {\it Fano fibration} if
\begin{enumerate}
\item $X$ is  with terminal singularities; 

\item $f$ is a {\it contraction}, i.e., $f_*\OO_X=\OO_Z$;

\item $-K_X$ is ample over $Z$;

\item $\dim X > \dim Z$. 
\end{enumerate}
A Fano fibration $X\to Z$ is said to be a {\it Mori fibration} if $X$ is $\mathbb{Q}$-factorial and $\rho(X/Z)=1$.
\end{definition}
Note that a general fiber of a Fano fibration is a terminal Fano variety.
\begin{definition}\label{ndae}Fix positive integers $n$ and $d$, a rational number $a\geq 0$, and $0\leq \epsilon\leq 1$.
An {\it $(n, d, a, \epsilon)$-Fano fibration} $(f:X\to Z, B, H)$ consists of a Fano fibration  $f:X\to Z$, an effective $\bQ$-Cartier  $\bQ$-divisor $B$  on $X$, and a very ample divisor  $H$ on $Z$ such that 
\begin{enumerate}
\item $\dim X=n$;
\item $(H^{\dim Z})= d$;

\item $-(K_X+B)\sim_\bQ A-af^*(H)$ where $A$ is an ample $\bQ$-divisor on $X$; 

\item $(X, B)$ is $\epsilon$-klt.

\end{enumerate}
It is said to be {\it almost-extremal} if moreover,
\begin{enumerate}
\item[(5)] if we write $B=B'+B''$ where every component of $B'$ dominates $Z$ and  every component of $B''$ does not dominate $Z$, then $B''\sim_{\bQ} 0$ over $Z$.
\end{enumerate}
Here if $\dim Z=0$, we always set $H=0$ and $d=(H^{\dim Z})=1$. 
\end{definition}

\begin{remark}Condition (5) seems to be a little technical, but very natural.
Condition (5) holds if either $\dim Z=0$ or $X$ is $\bQ$-factorial and $\rho(X/Z)=1$. 
In this paper, condition (5) will be only used for LCTB$_n$. Note that if $f$ is an extremal contraction induced by an extremal ray, then $\rho(X/Z)=1$. This is the motivation of defining the terminology ``almost-extremal".
\end{remark}
In particular, if $(X,B)$ is an $\epsilon$-klt log Fano pair with a Mori fibration $f:X\to Z$, then $(f:X\to Z, B, H)$ is naturally an almost-extremal $(n, d, 0, \epsilon)$-Fano fibration for any very ample divisor $H$ on $Z$ with $d=(H^{\dim Z})$.

\begin{remark}\label{const}Suppose $(f:X\to Z, B, H)$ is an  $(n, d, a, \epsilon)$-Fano fibration. Then 
a general fiber $F$ of $f$ is a terminal Fano variety and $(F, B|_F)$ is an $\epsilon$-klt log Fano pair. Hence $(F\to f(F), B|_F, 0)$ is naturally an almost-extremal $(n-m, 1, 0, \epsilon)$-Fano fibration where $m=\dim Z$.

Suppose that $\dim Z>1$. Take a general element  $Z_1\in |H|$.  Denote $X_1=f^*(Z_1)\in |f^*(H)|$, $B_1=B|_{X_1}$, and $H_1= H|_{Z_1}$. Then $X_1$ and $Z_1$ are projective normal varieties, $X_1$ is again terminal, and the induced map $f_1:X_1\to Z_1$ is a Fano fibration. Moreover, $(H_1^{\dim Z_1})=(H^{\dim Z})=d$, 
$$
-(K_{X_1}+B_1)= -(K_X+B+X_1)|_{X_1}\sim_\bQ A|_{X_1}-(a+1)f_1^*(H_1),
$$
and $(X_1, B_1)$ is $\epsilon$-klt (cf. \cite[Lemma 5.17]{KM}).
Hence $(f_1: X_1\to Z_1, B_1, H_1)$ is an $(n-1, d, a+1, \epsilon)$-Fano fibration. Note that if $(f:X\to Z, B, H)$ is almost-extremal, so is $(f_1: X_1\to Z_1, B_1, H_1)$ since
$B_1=B'|_{X_1}+B''|_{X_1}$ satisfies condition (5) for general
 $X_1$ in $|f^*(H)|$.
\end{remark}

\subsection{Volumes}

Let $X$ be an $n$-dimensional projective variety  and $D$ be a Cartier divisor on $X$. The {\it volume} of $D$ is the real number
$$
{\Vol}(X, D)=\limsup_{m\rightarrow \infty}\frac{h^0(X,\OO_X(mD))}{m^n/n!}.
$$
Note that the limsup is actually a limit. Moreover by the homogenous property of  volumes, we can extend the definition to $\bQ$-Cartier $\bQ$-divisors. Note that if $D$ is a nef $\bQ$-divisor, then $\Vol(X, D)=D^n$. If $D$ is a non-$\bQ$-Cartier $\bQ$-divisors, we may take a $\bQ$-factorialization of $X$, i.e., a birational morphism $\phi:Y\to X$ which is isomorphic in codimension one and $Y$ is $\bQ$-factorial, then $\Vol(X, D):=\Vol(Y, \phi^{-1}_*D)$. Note that $\bQ$-factorializations always exist for klt pairs (cf. \cite[Theorem 1.4.3]{BCHM}).

For more background on volumes, see \cite[2.2.C, 11.4.A]{Positivity2}. It is easy to see the following inequality for volumes.
\begin{lem}\label{lemma volume}
Let $X$ be a projective normal variety, $D$ a $\bQ$-Cartier $\bQ$-divisor and $S$ a base-point free Cartier normal prime divisor. Then for any rational number $q>0$,
$$
\Vol(X,D+qS)\leq \Vol(X, D)+q(\dim X) \Vol(S, D|_S+qS|_S).
$$
\end{lem}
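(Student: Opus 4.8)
The plan is the standard ``peel off copies of $S$ and restrict'' argument; the only subtlety is in estimating the contribution of the restrictions. Fix a positive integer $m$ divisible enough that $mD$, $mqS$, and $m(D|_S+qS|_S)$ are integral Cartier divisors, and compute all volumes as limits over such $m$. Since $S$ is a prime --- hence effective --- Cartier divisor, for each integer $k\in\{1,\dots,mq\}$ there is a short exact sequence
\[
0\longrightarrow\OO_X\!\big(mD+(k-1)S\big)\longrightarrow\OO_X\!\big(mD+kS\big)\longrightarrow\OO_X\!\big(mD+kS\big)\otimes\OO_S\longrightarrow 0,
\]
and, $S$ being normal, $h^0\big(S,\OO_X(mD+kS)\otimes\OO_S\big)\le h^0\big(S,(mD+kS)|_S\big)=h^0\big(S,mD|_S+kS|_S\big)$. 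Summing over $k=1,\dots,mq$ gives
\[
h^0\big(X,m(D+qS)\big)=h^0\big(X,mD+mqS\big)\le h^0(X,mD)+\sum_{k=1}^{mq}h^0\big(S,mD|_S+kS|_S\big).
\]

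Next I would estimate the sum by $mq\cdot h^0\big(S,m(D|_S+qS|_S)\big)$ up to a term of order $o(m^{n})$. This is exactly where the hypothesis that $S$ --- hence $S|_S$ --- is semi-ample is used: $S|_S$ is nef, so the volume function $t\mapsto\Vol\big(S,D|_S+tS|_S\big)$ is non-decreasing on $[0,q]$, in particular bounded above by $\Vol\big(S,D|_S+qS|_S\big)$; and semi-ampleness --- base-point freeness of a fixed multiple $NS|_S$, together with tracking residues modulo $N$ and absorbing them into a fixed effective divisor on $S$ --- promotes this to the $h^0$-level bound
\[
h^0\big(S,mD|_S+kS|_S\big)\le h^0\big(S,m(D|_S+qS|_S)\big)+O(m^{n-2})\qquad(0\le k\le mq),
\]
uniformly in $k$; hence $\sum_{k=1}^{mq}h^0\big(S,mD|_S+kS|_S\big)\le mq\cdot h^0\big(S,m(D|_S+qS|_S)\big)+O(m^{n-1})$.

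Finally, combining the two displays, dividing by $m^{n}/n!$, and letting $m\to\infty$ along sufficiently divisible $m$: the term $\tfrac{n!}{m^{n}}h^0(X,mD)$ converges to $\Vol(X,D)$, the error term is $o(1)$, and since $\dim S=n-1$,
\[
\frac{n!}{m^{n}}\cdot mq\cdot h^0\big(S,m(D|_S+qS|_S)\big)=q\,n\cdot\frac{(n-1)!}{m^{n-1}}\,h^0\big(S,m(D|_S+qS|_S)\big)\longrightarrow q\,n\,\Vol\big(S,D|_S+qS|_S\big),
\]
which yields the stated inequality. The one genuinely delicate point --- and the reason one imposes ``semi-ample'', rather than merely ``nef'' or ``big'', on $S$ --- is the uniform $h^0$-estimate for the intermediate restrictions in the middle step: a priori the terms $h^0(S,mD|_S+kS|_S)$ with $k<mq$ need not be dominated by $h^0(S,m(D|_S+qS|_S))$, and base-point freeness of a multiple of $S|_S$ is precisely what controls them up to a negligible error. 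Everything else is bookkeeping with short exact sequences and the definition of the volume.
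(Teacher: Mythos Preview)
Your argument is correct and matches the paper's proof: both peel off copies of $S$ via the short exact sequence, sum to obtain $h^0(X,m(D+qS))\le h^0(X,mD)+\sum_{k=1}^{mq}h^0(S,mD|_S+kS|_S)$, bound the sum by $mq\cdot h^0\big(S,m(D|_S+qS|_S)\big)$, divide by $m^{n}/n!$, and pass to the limit. The paper simply asserts the termwise inequality $h^0(S,mD|_S+kS|_S)\le h^0(S,mD|_S+mqS|_S)$ without comment, so your more careful treatment of this step---tracking residues modulo the free multiple $N$ and absorbing the discrepancy into an $O(m^{n-2})$ term---is, if anything, a bit more thorough than the original.
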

\begin{proof}For sufficiently divisible $m$ such that $mq\in \mathbb{Z}$ and an integer $k$,
consider the short exact sequence
$$
0\to \OO_X(mD+(k-1)S)\to \OO_X(mD+kS)\to \OO_S(mD|_S+kS|_S)\to 0.
$$
Then
\begin{align*}
{}& h^0(X, \OO_X(mD+kS))\\
\leq{}& h^0(X, \OO_X(mD+(k-1)S))+h^0(S, \OO_S(mD|_S+kS|_S)).
\end{align*}
Hence we have
\begin{align*}
{}&h^0(X, \OO_X(mD+mqS))\\\leq {}&h^0(X, \OO_X(mD))+\sum_{k=1}^{mq}h^0(S, \OO_S(mD|_S+kS|_S))\\
\leq {}&h^0(X, \OO_X(mD))+mq h^0(S, \OO_S(mD|_S+mqS|_S)).
\end{align*}
For the last step we use the assumption that $S$ is base-point free which implies that $S|_S$ is linearly equivalent to an effective divisor on $S$.
Dividing by $\frac{m^{\dim X}}{(\dim X)!}$ and taking limit, we get the inequality.
\end{proof}

\subsection{Non-klt centers, connectedness lemma, and inversion of adjunction}

The following lemma suggests a standard way to construct non-klt centers.
\begin{lem}[{cf. \cite[Lemma 2.29]{KM}}]\label{dimk}
Let $(X, B)$ be a pair and $V\subset X$ a closed subvariety of codimesion $k$ such that $V$ is not contained in the singular locus of $X$. If $\mult_V B\geq k$, then $V$ is a non-klt center of $(X, B)$. 
\end{lem}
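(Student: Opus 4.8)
\textbf{Proof proposal for Lemma \ref{dimk}.}

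The plan is to reduce the statement to a local multiplicity computation on a log resolution. First I would pass to a log resolution $\pi\colon Y\to X$ of the pair $(X,B)$ that in addition is an isomorphism over the generic point of $V$ and such that the strict transform $\tilde V$ of $V$, together with all the exceptional divisors and the components of $\pi^{-1}_*B$, form a simple normal crossing configuration; since $V$ is not contained in $\Sing X$, this is possible by Hironaka, and near the generic point of $V$ we may even assume $Y=X$ locally. Writing $K_Y=\pi^*(K_X+B)+\sum a_i F_i$, the assertion that $V$ is a non-klt center of $(X,B)$ is equivalent to the existence of some $F_i$ with $a_i\le -1$ whose image is $V$. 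If $V$ is a component of $B$ with coefficient $\ge 1$ this is immediate (here $k=1$), so I would assume $V$ has codimension $k\ge 1$ and is not a component of $B$, and construct the required non-klt place by a weighted blow-up.

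The key step is the following: work in an analytic (or \'etale) neighbourhood of a general point $p$ of $V$, where by the simple-normal-crossing hypothesis we may choose local coordinates $x_1,\dots,x_n$ so that $V=\{x_1=\dots=x_k=0\}$ and $B$ is (after possibly shrinking) supported on coordinate hyperplanes among $x_1,\dots,x_n$. Let $E$ be the divisor obtained by the blow-up of $X$ along $V$ — equivalently the $(1,1,\dots,1)$-weighted blow-up in the coordinates $x_1,\dots,x_k$. Then the discrepancy of $K_X$ (with $B=0$) along $E$ is $k-1$, i.e. $a_E(X,0)=k-1$, because $E$ has weight $1$ in each of $k$ variables. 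On the other hand, the coefficient of $E$ in $\pi^*B$ equals $\mathrm{ord}_E(B)=\mathrm{mult}_V B$, since for a divisor $B$ the order of vanishing of its local equation along $E$ is exactly the multiplicity of $B$ at the generic point of $V$ (this uses that $E$ dominates $V$ and $V\not\subset\Sing X$, so the local ring of $X$ at the generic point of $V$ is regular and $\mathrm{mult}_V$ is just the order of vanishing). Therefore
$$
a_E(X,B)=a_E(X,0)-\mathrm{ord}_E(B)=(k-1)-\mathrm{mult}_V B\le (k-1)-k=-1.
$$
Since the center of $E$ on $X$ is $V$, this exhibits $V$ as the image of a non-klt place, hence a non-klt center of $(X,B)$, as required.

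The main obstacle, and the only point demanding care, is the identification $\mathrm{ord}_E(B)=\mathrm{mult}_V B$ together with the discrepancy computation $a_E(X,0)=k-1$: both are clean when $X$ is smooth along $V$, which is exactly guaranteed by the hypothesis $V\not\subset\Sing X$, so one should first restrict to the open set $X\setminus\Sing X$ (this does not change whether $V$ is a non-klt center, since non-klt centers can be detected on any open set meeting $V$). A secondary technical point is to make sure $E$ really is realized, up to the relevant computation, by a divisor on some log resolution $Y$ — this follows because the blow-up of $X$ along $V$ dominates $X$ and can be further resolved, and discrepancies only increase under further blow-ups of exceptional loci, so the inequality $a_E\le -1$ on the intermediate model persists as a non-klt place on the final resolution. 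Everything else is the standard divisorial-discrepancy bookkeeping, which I would not spell out in detail; the reference \cite[Lemma 2.29]{KM} records precisely this argument.
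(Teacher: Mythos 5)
Your core argument is correct and is exactly the standard computation behind the result the paper simply cites without proof (\cite[Lemma 2.29]{KM}): at a general point of $V$ both $X$ and $V$ are smooth, the exceptional divisor $E$ of the blow-up along $V$ has $a_E(X,0)=k-1$ and $\ord_E(\pi^*B)=\mult_V B$, so $a_E(X,B)\le -1$ and $E$ is a non-klt place with center $V$. Two small points of hygiene: your opening reduction is both unnecessary and not generally achievable --- a log resolution of $(X,B)$ cannot be an isomorphism over the generic point of $V$ when $B$ is singular along $V$ (which is precisely the case of interest, $\mult_V B\ge k$), and $B$ need not be supported on coordinate hyperplanes near a general point of $V$; fortunately the blow-up computation never uses this, so you should simply delete that setup and work on an open set where $X$ and $V$ are smooth. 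Likewise, the justification that $E$ survives to a log resolution should be that the discrepancy $a_E(X,B)$ is an invariant of the divisorial valuation, hence unchanged on any higher model where $E$ appears, rather than the incorrect slogan that ``discrepancies only increase under further blow-ups.''
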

Recall that the {\it multiplicity} $\mult_VF$ of a divisor $F$ along a subvariety $V$ is defined by the multiplicity $\mult_xF$ of $F$ at a general point $x\in V$.

Unfortunately, the converse of Lemma \ref{dimk} is not true unless $k=1$. Usually we do not have good estimates for the multiplicity along a non-klt center but the following lemma.

\begin{lem}[{cf. \cite[Theorem 9.5.13]{Positivity2}}]
Let $(X, B)$ be a pair and $V\subset X$  a non-klt center of $(X, B)$ such that $V$ is not contained in the singular locus of $X$. Then $\mult_V B\geq 1$. 
\end{lem}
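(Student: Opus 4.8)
The plan is as follows. Since $V$ is not contained in $\Sing X$ and $\mult_V B$ is measured at a general point $x$ of $V$, I would first reduce to the following local statement: \emph{if $(X,B)$ is a pair, $x\in X$ is a point at which $X$ is smooth, and $x\in\Nklt(X,B)$, then $\mult_x B\ge 1$} (equivalently, $\lct(X;B)$ at $x$ is $\ge 1/\mult_x B$ at a smooth point). This reduction is immediate: a general point $x\in V$ is a smooth point of $X$, it lies in $\Nklt(X,B)$ because $V$ is a non-klt center, and $\mult_V B=\mult_x B$ by definition of multiplicity along a subvariety.

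To prove the local statement I would induct on $n=\dim X$, shrinking $X$ so that it is smooth. For $n=1$ the hypothesis $x\in\Nklt(X,B)$ forces the coefficient of $B$ at $x$ to be $\ge 1$, which is exactly $\mult_x B\ge 1$. For $n\ge 2$, given $x$ with $X$ smooth at $x$ and $x\in\Nklt(X,B)$, pick a non-klt center $W\ni x$ and let $x'$ be a general point of $W$; then $X$ is smooth at $x'$ and $\mult_{x'}B\le\mult_x B$, so it suffices to prove $\mult_{x'}B\ge 1$. Now I would cut with a general hyperplane section $H$ through $x'$ — a general member of $|\OO_X(1)\otimes\mathfrak{m}_{x'}|$ for $\OO_X(1)$ sufficiently very ample — so that $H$ is smooth at $x'$ and $\mult_{x'}(B|_H)=\mult_{x'}B$. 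By adjunction for non-klt centers (a general hyperplane section through the general point $x'$ of the non-klt center $W$ meets $W$ in a non-klt center of $(H,B|_H)$, using $K_H+B|_H=(K_X+B+H)|_H$), we get $x'\in\Nklt(H,B|_H)$. Since $\dim H=n-1$ and $H$ is smooth at $x'$, the inductive hypothesis applied to $(H,B|_H)$ yields $\mult_{x'}(B|_H)\ge 1$, hence $\mult_{x'}B\ge 1$, completing the induction.

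I expect the adjunction step to be the one requiring care: one must verify that $W$ genuinely restricts to a non-klt center of $(H,B|_H)$, i.e.\ that the discrepancy of the corresponding divisorial place is unchanged when passing from $(X,B)$ to $(H,B|_H)$. I would establish this by fixing a log resolution of $(X,B)$, invoking Bertini so that the strict transform of a general $H$ stays smooth and transverse to the exceptional divisors, and comparing discrepancies via $K_{\widetilde H}=(K_Y+\widetilde H)|_{\widetilde H}$; the genericity of $H$ and of $x'$ in $W$ ensures there are no correction terms. This is precisely the flavour of the adjunction and inversion-of-adjunction results collected in this subsection. As an alternative to the inductive cutting, one could instead intersect with $\dim V$ general hyperplane sections in one step so that $V$ becomes a point and $\dim X$ equals the codimension of $V$, reducing to the case of a smooth point lying in $\Nklt$, and then analyse the blow-up of that point directly; the essential content is the same.
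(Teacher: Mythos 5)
The paper does not prove this lemma at all: it is quoted with a reference to Lazarsfeld (the ``cf.\ Theorem 9.5.13'' citation), so there is no internal argument to compare against. Your proposal is a correct reconstruction of the standard proof behind that reference: reduce to the statement ``$x$ a smooth point of $X$ lying in $\Nklt(X,B)$ implies $\mult_x B\geq 1$'', and induct on $\dim X$ by cutting with a general hyperplane through the point, the base case being the trivial one-dimensional computation. Two points deserve emphasis. First, the genericity of $H$ is not a cosmetic device: restriction to a hyperplane through $x'$ can only \emph{increase} the multiplicity (a tangent $H$ gives $\mult_{x'}(B|_H)>\mult_{x'}B$), so the inductive conclusion $\mult_{x'}(B|_H)\geq 1$ transfers to $B$ only because a general $H$ through $x'$ gives equality $\mult_{x'}(B|_H)=\mult_{x'}B$; your write-up has this, but it is the step where the inequality would otherwise point the wrong way. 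Second, the adjunction step you flag as delicate can be taken verbatim from the inversion-of-adjunction statement recorded in the same subsection of the paper: for $S=H$ a normal Cartier divisor not contained in $\Supp B$ one has $\Nklt(X,B)\cap H\subset \Nklt(H,B|_H)$, which needs no genericity of the point and no transversality analysis on a log resolution (one only has to note that a general member of the very ample system through $x'$ is Cartier, normal, smooth at $x'$, and not a component of $B$, so that $(H,B|_H)$ is again a pair via $K_H+B|_H=(K_X+H+B)|_H$). With that formulation the detour through a general point $x'$ of an auxiliary center $W$ is also unnecessary, since the containment of non-klt loci is valid at every point of $H$; your hand-made Bertini-on-a-log-resolution verification can be made to work, but it is exactly the place where correction terms near the base point of the pencil would have to be controlled, and the quoted connectedness-based inversion of adjunction bypasses it. Your one-step alternative (cutting by $\dim V$ general hyperplanes and analysing the resulting point directly) is also viable but ultimately re-proves the same smooth-point statement, so the inductive version you give is the cleaner route.
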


If we assume some simple normal crossing condition on the boundary, we can get more information on the multiplicity along a non-klt center. For simplicity, we just consider surfaces.
\begin{lem}[{cf. \cite[4.1 Lemma]{McK}}]\label{multi}
Fix $0<e<1$. Let $S$ be a smooth surface, $B$  an effective $\bQ$-divisor, and $D$  a (not necessarily effective) simple normal crossing supported $\bQ$-divisor. Assume that  coefficients of $D$ are at most $e$ and $\mult_P B\leq 1-e$ for some point $P$, then for arbitrary divisor $E$ centered on $P$ over $S$, $a_E(S, B+D)\geq -e$. 
In particular, if $V$ is a non-klt center of $(S, B+D)$ and coefficients of $D$ are at most $e$, then $\mult_V B> 1-e$.
\end{lem}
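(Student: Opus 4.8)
The plan is to reduce everything to a local two-dimensional computation in a smooth formal (or étale) neighbourhood of $P$, where $B+D$ is supported on a collection of coordinate-type curves after we further blow up to achieve simple normal crossing for $B$ as well. First I would choose local coordinates so that $D = \sum d_i D_i$ with the $D_i$ smooth, meeting transversally, $0\le d_i\le e$; the divisors $D_i$ through $P$ are at most two (at most a ``node''), and the hypothesis $\mult_P B\le 1-e$ bounds the total mass of $B$ sitting over $P$. The key idea is the classical toroidal/weighted-blow-up computation: for a divisor $E$ over $S$ with centre $P$ obtained by a sequence of blow-ups, the discrepancy $a_E(S,B+D)$ is controlled by the ``log discrepancy'' $a_E(S,0)+1=:w$ (a positive integer-combination weight, $\ge 2$ for the first blow-up and nondecreasing) minus the weighted coefficients picked up from $B$ and $D$ along $E$. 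Concretely one gets an inequality of the shape
\begin{equation*}
a_E(S,B+D)\ \ge\ a_E(S,0)\ -\ \operatorname{mult}_E(B)\ -\ \sum_i d_i\,\operatorname{mult}_E(D_i),
\end{equation*}
where $\operatorname{mult}_E$ denotes the coefficient of $E$ in the total transform.

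The main step is then to bound the two subtracted terms. Since each $D_i$ is smooth and there are at most two of them through $P$, a standard induction on the length of the blow-up sequence shows $\sum_i \operatorname{mult}_E(D_i)\le a_E(S,0)+1$ (equality in the toric/monomial case), hence $\sum_i d_i\operatorname{mult}_E(D_i)\le e\,(a_E(S,0)+1)$. For $B$, the point is that its coefficient picked up along $E$ is at most $\operatorname{mult}_P(B)$ times the first weight $w_1=\operatorname{mult}_E(\mathfrak m_P\OO)$, and more precisely $\operatorname{mult}_E(B)\le \operatorname{mult}_P(B)\cdot\operatorname{mult}_E(B')$ for the strict transform pattern — I would run the same induction to get $\operatorname{mult}_E(B)\le (1-e)\,(a_E(S,0)+1)$ using $\operatorname{mult}_P B\le 1-e$. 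Combining,
\begin{equation*}
a_E(S,B+D)\ \ge\ a_E(S,0)\ -\ (1-e)(a_E(S,0)+1)\ -\ e\,(a_E(S,0)+1)\ =\ a_E(S,0)-(a_E(S,0)+1)\ =\ -1,
\end{equation*}
which is not yet what we want; to upgrade $-1$ to $-e$ one uses that at least one of the two weights is strict — namely $a_E(S,0)\ge 1$ and the bound on $\sum d_i\operatorname{mult}_E(D_i)$ can be sharpened to $e\cdot a_E(S,0)$ when the relevant $D_i$ does not pass through the generic centre at every stage, or one simply observes that in the monomial model the extremal $E$ is the single blow-up of $P$, for which $a_E=1$, $\operatorname{mult}_E B=\operatorname{mult}_P B\le 1-e$, $\operatorname{mult}_E D_i=1$, giving $a_E(S,B+D)\ge 1-(1-e)-e\cdot(\text{number of }D_i\text{ through }P)\ge 1-(1-e)-2e=-e$ (the case of two $D_i$'s is the worst), and higher blow-ups only improve this by convexity/superadditivity of discrepancies along a chain. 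I would phrase this last point as: the function $E\mapsto a_E(S,B+D)$ restricted to divisors with centre $P$ attains its infimum at a divisor extracted by a single weighted blow-up of $P$ (by the standard ``Zariski decomposition of the chain'' argument for surfaces), and there the computation above is explicit.

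For the ``in particular'' clause: if $V$ is a non-klt center of $(S,B+D)$ with coefficients of $D$ at most $e$, then either $V=S$ (impossible since $(S,D)$ with small coefficients is klt and adding $B$ effective keeps it klt at the generic point of $S$), or $V$ is a curve, or $V=P$ a point. If $V$ is a curve $C$, klt-ness fails in codimension one, so $\operatorname{mult}_C(B+D)\ge 1$, hence $\operatorname{mult}_C B\ge 1-\operatorname{mult}_C D\ge 1-e$, and strict inequality $>1-e$ follows because $D$'s coefficient along $C$ is $<1$ forces the defect to come from $B$ with positive slack (if $\operatorname{mult}_C B=1-e$ exactly then $\operatorname{mult}_C(B+D)\le 1$ with equality only if $\operatorname{mult}_C D=e$, which is still lc not non-klt). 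If $V=P$, then by the first part $a_E(S,B+D)\ge -e>-1$ for every $E$ centred at $P$, so $P$ is not a non-klt center, a contradiction; thus this case does not occur, and in all remaining cases $\operatorname{mult}_V B>1-e$.

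\medskip
The step I expect to be the main obstacle is making the weighted-blow-up bookkeeping rigorous enough to land exactly on the constant $-e$ rather than $-1$: one must track that the ``$D$-defect'' and ``$B$-defect'' along a deep divisor $E$ cannot simultaneously be extremal, which is where the two-dimensionality (at most two branches through $P$, chain structure of the exceptional divisors over a surface) is genuinely used. Everything else — choosing coordinates, the short exact sequence / adjunction bookkeeping, and the codimension-one case — is routine.
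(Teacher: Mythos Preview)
Your separate-bounding strategy has a real gap. Bounding $\nu_E(D)\le e\,(a_E(S,0)+1)$ (this does follow, from $(S,D_1+D_2)$ being lc) and $\nu_E(B)\le (1-e)\,(a_E(S,0)+1)$ independently can only ever yield $a_E(S,B+D)\ge -1$: the two estimates together consume the entire log-discrepancy budget $a_E(S,0)+1$, with no slack left to reach $-e$. Your attempted upgrade --- ``the infimum is attained at a single weighted blow-up, and there the computation is explicit'' --- is not justified. For arbitrary $B$ the minimal log discrepancy over $P$ need not be computed by a valuation that is monomial in the coordinates for which $D$ is the coordinate cross, and ``convexity/superadditivity along a chain'' is not a theorem in the form you invoke. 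Your own single-blow-up computation already shows that $a_E=-e$ can be attained, so deeper divisors must be controlled by an actual argument, not an appeal to monotonicity.

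The idea you are missing is that the exceptional of a single blow-up can be \emph{absorbed into $D$}. Blow up $P$ once; writing $K_{S_1}+B_1+D_1+mE_1=\pi^*(K_S+B+D)$ with $B_1,D_1$ the strict transforms, one has
\[
m=\mult_P(B+D)-1\le (1-e)+2e-1=e,
\]
since at most two branches of $D$ pass through $P$. Thus $D_1+mE_1$ is again simple-normal-crossing supported with all coefficients $\le e$, while $\mult_Q B_1\le\mult_P B\le 1-e$ for every $Q\in E_1$. The hypotheses reproduce themselves verbatim, and a finite induction on the length of the blow-up sequence realizing $E$ gives $a_E(S,B+D)\ge -e$ directly. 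This is the paper's proof; it is two lines and needs no valuation-theoretic estimates. The whole point is that $B$ and $D$ should not be tracked separately: at each step the new exceptional joins $D$, and only the strict transform of $B$ remains as the ``$B$-part''.

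A smaller issue: in your argument for a curve $C$, the case $\mult_C B=1-e$ and $\mult_C D=e$ gives $a_C(S,B+D)=-1$, which by definition \emph{is} a non-klt place, not ``lc but not non-klt'' as you write.
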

\begin{proof}
By taking a sequence of point blow-ups, we can get the divisor $E$. Consider the blow-up at $P$, we have $f:S_1\rightarrow S$ with $K_{S_1}+B_1+D_1+mE_1=f^*(K_S+B+D)$ where $B_1$ and $D_1$ are the strict transforms of $B$ and $D$ respectively, and $E_1$ is the exceptional divisor with coefficient $m=\mult_P(B+D)-1\leq 1-e+2e-1=e$ since $\mult_P(D)\leq 2e$. 
Now $D_1+mE_1$ is again simple normal crossing supported and $\mult_QB_1\leq \mult_PB$ for $Q\in E_1$. Hence by induction on the number of blow-ups, we conclude that  the coefficient of $E$ is at most $e$ and hence $a_{E}(S, B+D)\geq -e$. 
\end{proof}

We have the following connectedness lemma of Koll\'{a}r and Shokurov for non-klt locus (cf.  
Shokurov \cite{Shokurov}, Koll\'{a}r \cite[17.4]{Kol92}).
\begin{thm}[Connectedness Lemma]
Let $f:X\rightarrow Z$ be a proper morphism of normal varieties with connected fibers and $D$ a $\bQ$-divisor such that $-(K_X+D)$ is $\bQ$-Cartier, $f$-nef, and $f$-big. Write $D=D^+-D^-$ where $D^+$ and $D^-$ are effective with no common components. If $D^-$ is $f$-exceptional (i.e. all of its components have image of codimension at least $2$), then ${\rm Nklt} (X,D)\cap f^{-1}(z)$ is connected for any $z\in Z$. 
\end{thm}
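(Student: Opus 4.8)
\emph{The plan.} The statement is local on $Z$, so I would first assume $Z$ affine and fix $z\in Z$, the goal being that $\Nklt(X,D)\cap f^{-1}(z)$ is connected. Then I would pass to a log resolution $g\colon Y\to X$ of $(X,\Supp D)$ and put $h=f\circ g\colon Y\to Z$; since $f$ has connected fibres and $g$ is birational, $h$ is proper with connected fibres. Writing $K_Y=g^*(K_X+D)+\sum_i a_iE_i$ with the $E_i$ the prime divisors of $Y$, the reduced divisor $S:=\sum_{a_i\le-1}E_i$ satisfies $g(\Supp S)=\Nklt(X,D)$ (this uses that the resolution is log smooth), and, $g$ being proper, $g(S\cap h^{-1}(z))=\Nklt(X,D)\cap f^{-1}(z)$. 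So it suffices to prove that $S\cap h^{-1}(z)$ is connected, i.e. that $h|_S$ has connected fibre over $z$.

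Next I would run the standard Kawamata--Viehweg machine. Decompose $\sum_i a_iE_i=A-N$ with $A=\sum_{a_i>-1}a_iE_i$ and $N=\sum_{a_i\le-1}(-a_i)E_i$ (effective, all coefficients $\ge1$), and build a rounded divisor $\mathcal N$ out of $\lceil A\rceil$, $\lfloor N\rfloor$, $S$ and an effective exceptional correction, arranged so that: (i) $\mathcal N\ge0$, $\mathcal N$ has no component in common with $S$, and the non-$g$-exceptional part of $\mathcal N$ is supported on the strict transform of $D^-$; and (ii) $\mathcal N-S-K_Y\sim_{\bQ,h}-g^*(K_X+D)+\Theta$ with $\Theta$ effective, its fractional part of simple normal crossing support, so that relative Kawamata--Viehweg vanishing applies. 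Since $-g^*(K_X+D)$ is the pullback of an $f$-nef and $f$-big $\bQ$-divisor, this vanishing gives $R^1h_*\OO_Y(\mathcal N-S)=0$; applying $h_*$ to the exact sequence $0\to\OO_Y(\mathcal N-S)\to\OO_Y(\mathcal N)\to\OO_S(\mathcal N|_S)\to0$ then produces a surjection $h_*\OO_Y(\mathcal N)\twoheadrightarrow h_*\OO_S(\mathcal N|_S)$.

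Then I would identify the left-hand side. By (i), $g_*\OO_Y(\mathcal N)=\OO_X(E)$ with $E$ effective and supported on $D^-$, and here the hypothesis that $D^-$ is $f$-exceptional enters: $f(\Supp E)$ has codimension $\ge2$ in $Z$, so $f_*\OO_X(E)$ agrees with $\OO_Z$ outside a set of codimension $\ge2$, hence --- $Z$ being normal --- has reflexive hull $\OO_Z$; squeezed between $\OO_Z$ and $\OO_Z$ it equals $\OO_Z$. Thus $h_*\OO_Y(\mathcal N)=\OO_Z$ and $h_*\OO_S(\mathcal N|_S)\cong\OO_Z/\mathfrak a$ for an ideal $\mathfrak a\subseteq\OO_Z$. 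Since $\mathcal N|_S\ge0$, its canonical section gives an $\OO_Z$-algebra inclusion $h_*\OO_S\hookrightarrow\OO_Z/\mathfrak a$ through which the quotient map $\OO_Z\twoheadrightarrow\OO_Z/\mathfrak a$ factors, forcing $(h_*\OO_S)_z=(\OO_Z/\mathfrak a)_z$, a local ring. Taking the Stein factorisation $h|_S=q\circ p$ with $p$ having connected fibres and $q$ finite, $h_*\OO_S=q_*\OO_{S'}$; localness of $(q_*\OO_{S'})_z$ forces $q^{-1}(z)$ to be a single point, whence $S\cap h^{-1}(z)=p^{-1}(q^{-1}(z))$ is connected.

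The hard part is steps (i)--(ii): engineering the rounding $\mathcal N$ so that the relative vanishing and the identification $h_*\OO_Y(\mathcal N)=\OO_Z$ hold at the same time, in particular absorbing components of $D^+$ of coefficient $\ge1$ and exceptional divisors of discrepancy $\le-2$ into the $g$-exceptional correction without spoiling the Kawamata--Viehweg hypotheses. Each hypothesis of the lemma is consumed exactly once --- $f$-nefness and $f$-bigness of $-(K_X+D)$ for the vanishing, $f$-exceptionality of $D^-$ for the structure-sheaf computation --- and matching the $\lceil\cdot\rceil/\lfloor\cdot\rfloor$ bookkeeping to these two demands is the technical heart of the argument.
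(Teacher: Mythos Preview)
The paper does not give its own proof of the Connectedness Lemma; it is stated as a known result and attributed to Shokurov and Koll\'ar (with the citation \cite[17.4]{Kol92}), so there is no in-paper argument to compare against. Your proposal is a faithful outline of the standard proof found in those references: pass to a log resolution, arrange the rounded divisor so that relative Kawamata--Viehweg vanishing kills $R^1h_*$, push forward the short exact sequence to get a surjection $h_*\OO_Y(\mathcal N)\twoheadrightarrow h_*\OO_S(\mathcal N|_S)$, identify the source as $\OO_Z$ using the $f$-exceptionality of $D^-$, and finish with Stein factorisation. The bookkeeping you flag as the delicate point --- matching the rounding so that both the vanishing hypothesis and the identification $g_*\OO_Y(\mathcal N)=\OO_X(E)$ with $E$ supported on $D^-$ hold simultaneously --- is exactly where the work lies, and your accounting of which hypothesis is consumed where is accurate.
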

\begin{remark}
There are two main cases of interest of Connectedness Lemma:
\begin{enumerate}
\item $Z$ is a point and $(X,D)$ is a log Fano pair. Then $\Nklt(X,D)$ is connected.
\item $f:X\rightarrow Z$ is birational, $(Z,B)$ is a log pair and $K_X+D=f^*(K_Z+B)$.
\end{enumerate}
\end{remark}
As an application, we have the following theorem on inversion of adjunction (cf. \cite[Theorem 5.50]{KM}). Here we only use a weak version.
\begin{thm}[Inversion of adjunction]
Let $(X, B)$ be a pair and $S\subset X$ a normal  Cartier divisor not contained in the support of $B$. Then $$\Nklt(X,B)\cap S\subset \Nklt(S, B|_S).$$ In particular, if $\Nklt(X, B)\cap S\neq \emptyset$, then $(S, B|_S)$ is not klt.
\end{thm}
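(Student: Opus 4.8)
The plan is to derive the statement from the Connectedness Lemma above, via the usual adjunction computation on a log resolution --- this is the classical argument of Koll\'{a}r. Since the claim is local on $X$, fix a closed point $P\in \Nklt(X,B)\cap S$ and suppose for contradiction that $(S,B|_S)$ is klt in a neighbourhood of $P$. Because $P$ lies in $\Nklt(X,B)$, there is a non-klt place $F_0$ of $(X,B)$ whose center on $X$ contains $P$. Since $X$ is normal, it is smooth at the generic point of the divisor $S$, so one may choose a log resolution $g\colon Y\to X$ of $(X,S+B)$ which extracts $F_0$, is an isomorphism over the generic point of $S$, has smooth strict transform $\tilde S$ of $S$, and restricts to a log resolution $g|_{\tilde S}\colon \tilde S\to S$ of $(S,B|_S)$.

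Write $K_Y+D=g^*(K_X+S+B)$. Since $S\not\subseteq\Supp B$, the coefficient of $\tilde S$ in $D$ equals $1$, and since adding the effective divisor $S$ to the boundary only decreases discrepancies, $F_0$ has coefficient at least $1$ in $D$, while $F_0\neq\tilde S$ because $a_{\tilde S}(X,B)=0$. Writing $D=D^+-D^-$, the divisor $D^-$ is $g$-exceptional (a divisor of positive discrepancy over $(X,S+B)$ cannot be a strict transform), and $-(K_Y+D)=-g^*(K_X+S+B)$ is $g$-nef and $g$-big since $g$ is birational; so the Connectedness Lemma gives that $\Nklt(Y,D)\cap g^{-1}(P)$ is connected. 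This set is the union of the closed subsets $\tilde S\cap g^{-1}(P)$ and $T\cap g^{-1}(P)$, where $T=\bigcup\{F_i\neq\tilde S:\mult_{F_i}D\geq 1\}$; both are nonempty (the first because $P\in S$, the second because it contains points of $F_0$ over $P$), so by connectedness they must meet. Hence there is a point $Q\in\tilde S\cap g^{-1}(P)$ lying on some component $F_j\neq\tilde S$ of $D$ with $\mult_{F_j}D\geq 1$.

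Finally, adjunction gives $K_{\tilde S}+(D-\tilde S)|_{\tilde S}=(g|_{\tilde S})^*(K_S+B|_S)$, and since $g|_{\tilde S}$ is a log resolution of $(S,B|_S)$, the discrepancies of $(S,B|_S)$ are computed from $(D-\tilde S)|_{\tilde S}$. The component $E$ of $F_j\cap\tilde S$ passing through $Q$ occurs in $(D-\tilde S)|_{\tilde S}$ with coefficient equal to $\mult_{F_j}D\geq 1$; here one checks that $F_j\cap\tilde S$ is reduced and that no third component of $D$ can contain $E$, for codimension reasons. Therefore $a_E(S,B|_S)\leq -1$, so $E$ is a non-klt place of $(S,B|_S)$ whose center contains $g(Q)=P$, contradicting our assumption. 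The in-particular statement then follows at once. I expect the main obstacle to be the discrepancy bookkeeping in this last paragraph together with arranging that the extra non-klt component $F_0$ coming from $(X,B)$ is genuinely distinct from $\tilde S$ and genuinely meets $g^{-1}(P)$ --- this is exactly the point at which the hypotheses $P\in\Nklt(X,B)$ (rather than merely $P\in\Nklt(X,S+B)$) and $S\not\subseteq\Supp B$ are used.
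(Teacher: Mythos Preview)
The paper does not give its own proof of this statement; it cites \cite[Theorem 5.50]{KM} and merely notes that it is an application of the Connectedness Lemma. Your proposal reproduces precisely that classical Koll\'ar--Shokurov argument and is correct.
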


\subsection{Length of extremal rays}
Recall the result on length of extremal rays due to Kawamata.
\begin{thm}[{\cite{Ka}}]\label{ext ray}
Let $(X, B)$ be a klt pair. Then every $(K_X+B)$-negative extremal ray $R$ is generated by a rational curve $C$ such that $$0<-(K_X+B)\cdot C\leq 2\dim X.$$
\end{thm}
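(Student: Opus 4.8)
The inequality $-(K_X+B)\cdot C>0$ is automatic for any curve $C$ with $[C]\in R$, since $R$ is a $(K_X+B)$-negative ray; so the real content is to produce a \emph{rational} curve generating $R$ whose $-(K_X+B)$-degree is at most $2\dim X$. The plan is to combine the Contraction Theorem for klt pairs (a consequence of the Cone and Base-Point-Free theorems, cf.\ \cite{KM}) with Mori's bend-and-break method, carried out on a log resolution of $(X,B)$ by reduction to positive characteristic, following \cite{Ka}.

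First I would apply the Contraction Theorem to $R$: since $(X,B)$ is klt, $R$ admits a contraction $\phi=\phi_R\colon X\to Y$ onto a normal projective variety with connected fibers, such that an irreducible curve is $\phi$-contracted if and only if its class lies in $R$, and with $-(K_X+B)$ being $\phi$-ample. As $R\neq 0$, I may fix an irreducible positive-dimensional subvariety $F$ contained in a fiber of $\phi$; then $\dim F\le\dim X$ and $-(K_X+B)|_F$ is ample. Next I would take a log resolution $\pi\colon\widetilde X\to X$ of $(X,B)$ and write $K_{\widetilde X}+\Gamma=\pi^*(K_X+B)+E$, where $\Gamma\ge 0$ has simple normal crossing support with all coefficients in $[0,1)$ --- here the \emph{klt} hypothesis enters, guaranteeing all discrepancies exceed $-1$ --- and $E\ge 0$ is $\pi$-exceptional. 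Setting $g=\phi\circ\pi$ and letting $\widetilde F$ be a component of $g^{-1}(\phi(F))$ dominating $F$, I would run bend-and-break on the smooth variety $\widetilde X$: after spreading the data out over a finitely generated $\bQ$-algebra and reducing modulo a suitable prime $p$, and starting from a carefully chosen curve $C_0\subset\widetilde F$ lying in a fiber of $g$ with $-K_{\widetilde X}\cdot C_0>0$, one obtains a rational curve $\widetilde C$ through a general point of $\widetilde F$, contracted by $g$, with $-K_{\widetilde X}\cdot\widetilde C\le\dim\widetilde F+1\le\dim X+1$; and, via the length-normalized form of bend-and-break applied with the $g$-nef divisor $-\pi^*(K_X+B)$, with $-\pi^*(K_X+B)\cdot\widetilde C$ controlled in terms of $\dim X$.

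Because $\widetilde C$ passes through a general point of $\widetilde F$, it is not contained in $\operatorname{Exc}(\pi)$, so $C:=\pi_*\widetilde C$ is an honest rational curve on $X$; since $\widetilde C$ is $g$-contracted, $C$ is $\phi$-contracted, hence $[C]\in R$ and $C$ generates $R$. Finally one computes $-(K_X+B)\cdot C=-\pi^*(K_X+B)\cdot\widetilde C=-K_{\widetilde X}\cdot\widetilde C+E\cdot\widetilde C-\Gamma\cdot\widetilde C$, and combining the bend-and-break estimate with the facts that $\Gamma$ has coefficients $<1$ and $E$ is $\pi$-exceptional (so that choosing the general point of $\widetilde F$ keeps $E\cdot\widetilde C$ small) yields $-(K_X+B)\cdot C\le 2\dim X$.

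The main obstacle is exactly the bookkeeping in the resolution step: one must arrange that the rational curve produced upstairs simultaneously lies in a fiber of $\phi$, pushes forward to a \emph{nonconstant} curve on $X$, and satisfies the \emph{sharp} bound $2\dim X$ rather than merely "a constant depending only on $\dim X$" --- which forces one to track how $\widetilde C$ meets the exceptional and discrepancy divisors, and is precisely where the factor $2$ and the klt condition interact. A secondary technical point is that the reduction modulo $p$ must be performed relatively over $Y$, so that the degenerate rational curves stay contracted, and one must check that these characteristic-$p$ curves lift back to characteristic zero. If only the \emph{existence} of a generating rational curve is wanted, with no length bound, it can instead be read off from the Cone Theorem of Kawamata--Reid--Shokurov together with the Rationality Theorem; obtaining the explicit constant $2\dim X$, however, genuinely seems to require Mori's bend-and-break.
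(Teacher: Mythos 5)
You should first note that the paper does not prove this statement at all: it is quoted verbatim from Kawamata \cite{Ka} as a known result, and the paper's own contribution in this section is Theorem \ref{ext ray2}, which reduces the non-klt case to the klt case via a dlt-type modification from \cite[Corollary 1.4.4]{BCHM}. So the only meaningful comparison is with Kawamata's original argument, and your outline (contract $R$, pass to a log resolution, run bend-and-break in positive characteristic relatively over the base of the contraction, push the rational curve back down) is indeed the strategy of \cite{Ka}.

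As a proof, however, your write-up has a genuine gap exactly at the decisive estimate, and you have in effect deferred the heart of Kawamata's proof to ``bookkeeping.'' From $K_{\widetilde X}+\Gamma=\pi^*(K_X+B)+E$ one has
$$-\pi^*(K_X+B)\cdot\widetilde C=-K_{\widetilde X}\cdot\widetilde C-\Gamma\cdot\widetilde C+E\cdot\widetilde C,$$
and since $E\geq 0$ the exceptional term works \emph{against} you; choosing $\widetilde C$ through a general point of $\widetilde F$ only guarantees $\widetilde C\not\subset \operatorname{Exc}(\pi)$, not that $E\cdot\widetilde C$ is small, so the sentence ``keeps $E\cdot\widetilde C$ small'' is unsupported. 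The sharp bound $2\dim X$ does not come from $-K_{\widetilde X}\cdot\widetilde C\leq\dim X+1$ plus a perturbation: it comes from the length-normalized bend-and-break inequality $H\cdot\widetilde C\leq 2\dim X\cdot\frac{H\cdot C_0}{-K_{\widetilde X}\cdot C_0}$ applied with the $g$-nef divisor $H=-\pi^*(K_X+B)$, and to conclude one must produce a starting curve $C_0$ in the fiber with $-K_{\widetilde X}\cdot C_0>0$ \emph{and} $H\cdot C_0\leq -K_{\widetilde X}\cdot C_0$, i.e.\ $(E-\Gamma)\cdot C_0\leq 0$. Neither point is established in your sketch: on the resolution only $-\pi^*(K_X+B)$ is known to be positive on fiber curves, and $-K_{\widetilde X}\cdot C_0=-\pi^*(K_X+B)\cdot C_0-\Gamma\cdot C_0+E\cdot C_0$ can a priori be nonpositive. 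Arranging this choice of $C_0$ (using that the coefficients of $\Gamma$ are $<1$ and the structure of the fiber) is precisely where the klt hypothesis and the factor $2$ interact in \cite{Ka}, so the argument as written does not yet yield the stated bound; for the purposes of this paper the correct move is simply to cite \cite{Ka}, as the author does.
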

However, we need to deal with non-klt pairs in application. We need a generalization of this theorem for general pairs which is proved by Fujino.
\begin{thm}[{\cite[Theorem 1.1(5)]{Fujino}}]\label{ext ray2}
Let $(X, B)$ be a pair. Fix a $(K_X+B)$-negative extremal ray $R$. Assume that 
$$
R\cap \overline{NE}(X)_{\Nlc(X,B)} = \{0\},
$$
where 
$$
 \overline{NE}(X)_{\Nlc(X,B)} = {\rm Im}(\overline{NE}(\Nlc(X,B)) \to \overline{NE}(X)).
$$
Then $R$ is generated by a rational curve $C$ such that $$0<-(K_X+B)\cdot C\leq 2\dim X.$$
\end{thm}




\section{Boundedness of birationality}
In this section, we prove the first part of Theorem \ref{thm main}, on boundedness of ampleness and birationality.

\begin{thm}\label{thm birationality}
Fix integers $n>m>0$ and $d>0$, a rational number $a\geq 0$, $0<\epsilon<1$. Assume  {\rm LCTB}$_{n-1}$ and ${\rm BrTBAB}_{n-m}$ hold.
Then there exists an integer  $N(n,d, a, \epsilon)$ depending only on $n$, $d$, $a$, and  $\epsilon$, satisfying the following property: 

If $(f:X\to Z, B, H)$ is an almost-extremal $(n, d, a, \epsilon)$-Fano fibration with $\dim Z=m$, then $|-r_{n-m}K_X+N(n, d, a, \epsilon)f^*(H)|$ is ample and gives a birational map.
\end{thm}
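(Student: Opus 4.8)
The plan is to induct on $m=\dim Z$ in such a way that the fibre dimension $n-\dim Z$ stays fixed, so that the single integer $r:=r_{n-m}$ provided by ${\rm BrTBAB}_{n-m}$ (for which $|-rK_Y|$ is birational, and in particular $-rK_Y$ is ample, for every $(n-m)$-dimensional terminal Fano variety $Y$) can be used throughout. The base case is $\dim Z=0$: then $X$ is an $(n-m)$-dimensional terminal Fano variety and $|-rK_X|$ is ample and birational by ${\rm BrTBAB}_{n-m}$. For $\dim Z\geq1$ I would pass, as in Remark \ref{const}, to a general member $Z_1\in|H|$ and $X_1=f^{*}(Z_1)\in|f^{*}H|$, getting a terminal Fano fibration $f_1:X_1\to Z_1$ of the same fibre dimension over a lower-dimensional base, and then lift birationality of $|-rK_{X_1}+(\cdot)f_1^{*}H_1|$ up to $|-rK_X+Nf^{*}H|$. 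The two ingredients needed for the lift are (a) a uniform ampleness statement for $-rK_X+Nf^{*}H$, which feeds Kawamata--Viehweg vanishing and hence surjectivity of the restriction of global sections; and (b) the fact that $|f^{*}H|=f^{*}|H|$ separates the divisors lying over distinct points of $Z$, since $H$ is very ample. These are precisely the contents of Lemma \ref{lemma ample} and Lemma \ref{lemma birational}.

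The heart of the matter, and the step I expect to be hardest, is Lemma \ref{lemma ample}: finding $N=N(n,d,a,\epsilon)$ with $-rK_X+Nf^{*}H$ ample. Since $-K_X$ is $f$-ample and $f^{*}H$ is pulled back from an ample class, the divisor is ample for $N\gg0$; the work is in the uniformity. Using condition~(6), $\overline{NE}(X)=NE(X)$, so by the Cone Theorem it suffices to check positivity on finitely many extremal rays, each spanned by a rational curve $C$. Rays contracted by $f$ give $f^{*}H\cdot C=0$ but $-rK_X\cdot C>0$ since $-K_X$ is $f$-ample, hence impose no constraint on $N$. For a ray not contracted by $f$ one has $f^{*}H\cdot C\geq1$ because $f(C)$ is a curve and $H$ is very ample, so it is enough to bound $-K_X\cdot C$ from below uniformly. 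Here I would use the inequality $-K_X\cdot C>\tfrac1t(K_X+(1+t)B)\cdot C$ together with the length bound of Theorem \ref{ext ray2}: once $(X,(1+t)B)$ is klt for some $t>0$ depending only on $n,d,a,\epsilon$, the right-hand side is bounded below in terms of $n$ and $t$. A uniform such $t$ would follow immediately from ${\rm LCTB}_n$; the point emphasised in the introduction is that, since only curves \emph{not} contracted by $f$ need be controlled, one can instead run the length-of-extremal-rays argument after restricting to a general member of $|f^{*}H|$ (or to a general fibre), and so make do with ${\rm LCTB}_{n-1}$. Carrying this out while keeping all constants uniform, and making sure every relevant extremal curve is genuinely governed by the lower-dimensional data, is the delicate part.

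Granting Lemma \ref{lemma ample} (in the form that $-(r+1)K_X+Lf^{*}H$ is ample for some $L=L(n,d,a,\epsilon)$), I would finish the proof as follows. Choose a general flag $Z=Z_0\supset Z_1\supset\cdots\supset Z_m=\{\mathrm{pt}\}$ with $Z_k$ a general member of the restriction of $|H|$, and set $X^{(k)}=f^{*}(Z_k)$; by Remark \ref{const} each $X^{(k)}$ is a terminal Fano fibration over $Z_k$ with fibre dimension $n-m$, and by adjunction the restriction of $D:=-rK_X+Nf^{*}H$ to $X^{(k)}$ has the form $-rK_{X^{(k)}}+N_k(f^{(k)})^{*}H_k$ for explicit $N_k$. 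Since the restriction of an ample divisor to the effective Cartier divisor $X^{(k)}\subset X^{(k-1)}$ is again ample, the ampleness of Lemma \ref{lemma ample} descends the whole flag; on $X^{(m)}$, an $(n-m)$-dimensional terminal Fano variety, $|-rK_{X^{(m)}}|$ is ample and birational by ${\rm BrTBAB}_{n-m}$. Climbing back up, each exact sequence
\[
0\to\OO_{X^{(k-1)}}(D_{k-1}-X^{(k)})\to\OO_{X^{(k-1)}}(D_{k-1})\to\OO_{X^{(k)}}(D_k)\to0
\]
(with $D_k$ the restriction of $D$ to $X^{(k)}$) has $H^{1}$ of the left-hand term equal to zero by Kawamata--Viehweg vanishing, because $D_{k-1}-X^{(k)}-K_{X^{(k-1)}}$ is ample once $N$ exceeds the bound of Lemma \ref{lemma ample} applied with $r+1$ in place of $r$; hence the restriction maps on sections are surjective. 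Thus birationality of $\phi_{|D_k|}$ propagates upward to birationality of $\phi_{|D_{k-1}|}$ on the general $X^{(k)}$, and since $|f^{*}H|$ separates the members over distinct points of the base and (enlarging $N$) the restriction maps are nonzero, $\phi_{|D|}$ is birational on $X$ — this is Lemma \ref{lemma birational}. Taking $N=N(n,d,a,\epsilon)$ bigger than the finitely many constants appearing along the $m$ steps concludes the argument.
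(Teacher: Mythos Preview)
Your overall strategy matches the paper exactly: reduce to Lemma~\ref{lemma ample} (uniform ampleness) plus Lemma~\ref{lemma birational} (inductive lifting of birationality via Kawamata--Viehweg vanishing), then combine. Your account of Lemma~\ref{lemma birational} is essentially the paper's proof phrased via a complete flag; the only wrinkle is that the passage from $X^{(m-1)}$ (fibred over a curve) down to a single fibre $X^{(m)}$ requires putting \emph{two} general fibres into the exact sequence at once, since surjection onto sections over one fibre does not by itself separate distinct fibres---the paper makes this its base case $\dim Z=1$.

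The one place where your sketch is genuinely imprecise is the mechanism by which ${\rm LCTB}_{n-1}$ suffices in Lemma~\ref{lemma ample}. You propose to ``run the length-of-extremal-rays argument after restricting to a general member of $|f^*H|$''. Taken literally this fails: an extremal curve $C$ on $X$ with $f^*H\cdot C>0$ need not lie in any single $X_1$, and extremal rays of $X$ do not restrict to extremal rays of $X_1$. What the paper actually does is: apply ${\rm LCTB}_{n-1}$ to the restricted pair $(X_1,(1+t_0)B_1)$ (or to a general fibre, when $\dim Z=1$) to get klt-ness there; then use \emph{inversion of adjunction} to conclude that every curve in $\Nklt(X,(1+t_0)B)$ is contracted by $f$; and only then apply Theorem~\ref{ext ray2}, the non-klt length bound, on $X$ itself---hypothesis~(2) of that theorem is now verified for every extremal ray $R$ with $f^*H\cdot R>0$, while hypothesis~(1) comes from condition~(6), which guarantees every extremal ray is spanned by an actual curve (it does \emph{not} give finiteness of rays as you suggest). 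One further point your dichotomy misses: rays $R$ with $(K_X+(1+t_0)B)\cdot R\geq 0$ are handled not by bounding $-K_X\cdot C$ from below, but by the identity
\[
-K_X+kf^*H\equiv\Big(1+\tfrac{1}{t_0}\Big)A+\tfrac{1}{t_0}\big(K_X+(1+t_0)B\big)+\Big(k-a\big(1+\tfrac{1}{t_0}\big)\Big)f^*H,
\]
which is positive on such rays once $k\geq a(1+\tfrac{1}{t_0})$, irrespective of whether $f^*H\cdot R$ vanishes.
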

The proof splits into two parts as the following two lemmas.

\begin{lem} \label{lemma ample}
Fix integers $n>0$ and $d>0$, a rational number $a\geq 0$, $0<\epsilon<1$. Assume  {\rm LCTB}$_{n-1}$ holds.
Then there exists a number $N'(n,d, a, \epsilon)$ depending only on $n$, $d$, $a$, and  $\epsilon$, satisfying the following property: 

If $(f:X\to Z, B, H)$ is an almost-extremal $(n, d, a, \epsilon)$-Fano fibration with $\dim Z>0$,  then $-K_X+kf^*(H)$ is ample for all $k\geq N'(n,d, a, \epsilon)$.
\end{lem}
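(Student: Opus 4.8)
\emph{Reduction to a positivity statement on extremal curves.} Write $D_k:=-r\,K_X+k f^*(H)$ with $r=1$ (the statement only concerns $r_{n-m}=1$-multiples implicitly; I treat $-K_X+kf^*(H)$). Since the fibration is special, $\overline{NE}(X)=NE(X)$, so it suffices to prove that $D_k\cdot C>0$ for every curve $C$ generating an extremal ray of $\overline{NE}(X)$ once $k\ge N'$: indeed, positivity on all extremal rays forces $D_k$ to be nef (the cone is the closed convex hull of its extremal rays) and forces $D_k^{\perp}\cap\overline{NE}(X)=\{0\}$; moreover $-(K_X+B)\sim_\bQ A-af^*(H)$ with $A$ ample gives $D_k\sim_\bQ A+B+(k-a)f^*(H)$, which is big for $k\ge a$, so a nef big divisor with empty null locus, i.e.\ ample. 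If $C$ is contracted by $f$, then $f^*(H)\cdot C=0$ and $-K_X\cdot C>0$ because $-K_X$ is $f$-ample, so $D_k\cdot C>0$ for every $k$. Thus everything reduces to curves $C$ \emph{not} contracted by $f$, for which $f^*(H)\cdot C=H\cdot f_*C\ge 1$; and since $D_k\cdot C=A\cdot C+B\cdot C+(k-a)f^*(H)\cdot C$ with $A\cdot C>0$, the only obstruction is that $B\cdot C$ may be very negative. So the whole problem is to bound $-B\cdot C$ from above in terms of $f^*(H)\cdot C$, uniformly.

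\emph{Controlling $\Nklt$ via LCTB and inversion of adjunction.} The idea is to produce a uniform $t_0>0$ and a fixed effective divisor met by every non-contracted extremal curve, away from which $(X,(1+t_0)B)$ is klt. When $\dim Z=1$, take a general fibre $F$ of $f$; it is a normal Cartier prime divisor not contained in $\Supp B$, and by Remark~\ref{const} the triple $(F\to\mathrm{pt},B|_F,0)$ is a \emph{good} $(n-1,1,0,\epsilon)$-Fano fibration, so LCTB$_{n-1}$ yields that $(F,(1+t)B|_F)$ is klt for $0<t\le t_0:=\lambda(n-1,1,0,\epsilon)$; by inversion of adjunction $\Nklt(X,(1+t_0)B)\cap F=\emptyset$. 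Since $C$ is not contracted by $f$ it dominates the curve $Z$, hence meets $F$, hence $C\not\subset\Nklt(X,(1+t_0)B)$. When $\dim Z\ge 2$ I would run the same step with $F$ replaced by a general $X_1\in|f^*(H)|$: by Remark~\ref{const} this carries an $(n-1,d,a+1,\epsilon)$-Fano fibration, LCTB$_{n-1}$ gives $(X_1,(1+t_0)B_1)$ klt with $t_0=\lambda(n-1,d,a+1,\epsilon)$, inversion of adjunction gives $\Nklt(X,(1+t_0)B)\cap X_1=\emptyset$, and a non-contracted $C$ meets $X_1$ because $f^*(H)\cdot C\ge 1$. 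In either case \emph{every} curve generating a non-contracted extremal ray $R$ avoids $\Nklt(X,(1+t_0)B)$, so the hypotheses of Theorem~\ref{ext ray2} are satisfied for $(X,(1+t_0)B)$ and $R$ whenever $R$ is $(K_X+(1+t_0)B)$-negative.

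\emph{The length-of-extremal-rays estimate.} Fix a non-contracted extremal ray $R$ with generator $C$. If $(K_X+(1+t_0)B)\cdot C\ge 0$, then using $K_X+B\sim_\bQ af^*(H)-A$ one gets $t_0\,B\cdot C\ge A\cdot C-af^*(H)\cdot C$, hence $-B\cdot C<\tfrac{a}{t_0}f^*(H)\cdot C$. If $(K_X+(1+t_0)B)\cdot C<0$, then $R$ is $(K_X+(1+t_0)B)$-negative and Theorem~\ref{ext ray2} provides a rational generator $C$ with $0<-(K_X+(1+t_0)B)\cdot C\le 2n$; expanding $-(K_X+(1+t_0)B)\cdot C=A\cdot C-af^*(H)\cdot C-t_0\,B\cdot C$ and dropping $A\cdot C>0$ gives $-B\cdot C<\tfrac{2n+a}{t_0}f^*(H)\cdot C$. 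In both cases $-B\cdot C<\tfrac{2n+a}{t_0}f^*(H)\cdot C$, so
\[
D_k\cdot C=A\cdot C+B\cdot C+(k-a)f^*(H)\cdot C>\Bigl(k-a-\tfrac{2n+a}{t_0}\Bigr)f^*(H)\cdot C\ge 0
\]
as soon as $k\ge N':=\bigl\lceil a+\tfrac{2n+a}{t_0}\bigr\rceil$ (note $N'\ge a$, so $D_k$ is also big). Combined with the contracted case and the first paragraph, $-K_X+kf^*(H)$ is ample for all $k\ge N'$, and $N'$ depends only on $n,d,a,\epsilon$ since $t_0$ does (its apparent dependence on $\dim Z\le n$ is absorbed by taking a minimum).

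\emph{Main obstacle.} The heart of the argument is the uniform bound on $-B\cdot C$, obtained by feeding LCTB into inversion of adjunction to kill $\Nklt$ along a divisor hit by every non-contracted extremal curve, and then invoking the length of extremal rays for the (possibly non-klt) pair $(X,(1+t_0)B)$. The delicate bookkeeping point I expect to be the real work is arranging that this dimension reduction lands on LCTB in dimension $n-1$ — via a general fibre when $\dim Z=1$, and via general members of $|f^*(H)|$ (together with Remark~\ref{const} and condition (6)) when $\dim Z\ge 2$ — rather than on the a priori unavailable LCTB$_n$, all while preserving the goodness hypothesis needed to apply LCTB$_{n-1}$.
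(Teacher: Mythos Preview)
Your proof is correct and follows essentially the same route as the paper: use the special condition to reduce to extremal rays, dispose of $f$-contracted rays via $f$-ampleness of $-K_X$, and for non-contracted rays apply LCTB$_{n-1}$ to a general fibre (when $\dim Z=1$) or to a general $X_1\in|f^*(H)|$ (when $\dim Z\ge2$) together with inversion of adjunction to force such rays out of $\Nklt(X,(1+t_0)B)$, then invoke Theorem~\ref{ext ray2} to bound $-B\cdot C$ and conclude by Kleiman. Your organization (bounding $-B\cdot C$ against $f^*(H)\cdot C$ rather than directly expanding $(-K_X+kf^*(H))\cdot R$) and your final constant $N'=\lceil a+(2n+a)/t_0\rceil$ match the paper's $a+(a+2n)/t_0$.
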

\begin{proof}Let $(f:X\to Z, B, H)$ be an almost-extremal $(n, d, a, \epsilon)$-Fano fibration  with $\dim Z>0$. According to  {\rm LCTB}$_{n-1}$, we may take
$$
t_0=\min\{\lambda(n-1,d, a+1, \epsilon), \lambda(n-1,1, 0, \epsilon)\}>0.
$$

If $\dim Z=1$, for a general fiber $F$ of $f$, $(F\to f(F), B|_F, 0)$ is naturally an almost-extremal $(n-1, 1,0,\epsilon)$-Fano fibration by Remark \ref{const}. Hence by {\rm LCTB}$_{n-1}$, $(F, (1+t_0)B|_F)$ is klt. 

If $\dim Z>1$, by Remark \ref{const}, $(f_1:X_1\to Z_1, B_1, H_1)$ is an almost-extremal $(n-1, d,a+1,\epsilon)$-Fano fibration. Hence by {\rm LCTB}$_{n-1}$, $(X_1, (1+t_0)B_1)$ is klt. 

Hence, in either case, every curve in $\Nklt(X, (1+t_0)B)$ is contracted by $f$ by inversion of adjunction, which means that $f(\Nklt(X, (1+t_0)B))$ is a set of finitely many points. In particular, every curve $C_0$ supported in $\Nklt(X, (1+t_0)B)$ satisfies that $f^*(H)\cdot C_0 =0$. This implies that every class $c\in \overline{NE}(X)_{\Nlc(X,B)}$ satisfies that $f^*(H)\cdot c=0$ since $\Nlc(X, (1+t_0)B)\subset \Nklt(X, (1+t_0)B)$.

Now we consider an extremal ray $R$ of $\overline{NE}(X)$. 

If $R$ is $(K_X+(1+t_0)B)$-non-negative, then 
$$
\Big(-K_X+a\big(1+\frac{1}{t_0}\big)f^*(H)\Big)\cdot R= \big(1+\frac{1}{t_0}\big)A\cdot R+\frac{1}{t_0}(K_X+(1+t_0)B))\cdot R> 0.
$$

If $R$ is $(K_X+(1+t_0)B)$-negative and $f^*(H)\cdot R=0$, then $-K_X\cdot R>0$ since $-K_X$ is ample over $Z$.

If $R$ is $(K_X+(1+t_0)B)$-negative and $f^*(H)\cdot R>0$, then 
$$
R\cap \overline{NE}(X)_{\Nlc(X,B)} = \{0\}
$$
since we showed that $f^*(H)\cdot c =0$ for every class $c\in \overline{NE}(X)_{\Nlc(X,B)}$.
 By Theorem \ref{ext ray2}, $R$ is generated by a rational curve $C$ such that
$$
(K_X+(1+t_0)B)\cdot C \geq -2n.
$$
On the other hand, $f^*(H)\cdot C\geq 1$.
Hence 
\begin{align*}
{}&\Big(-K_X+\big(a+\frac{a+2n}{t_0}\big)f^*(H)\Big)\cdot C\\
={}& \big(1+\frac{1}{t_0}\big)A\cdot C+\frac{1}{t_0}(K_X+(1+t_0)B)\cdot C+\frac{2n}{t_0}f^*(H)\cdot C> 0.
\end{align*}

In summary, 
$$
(-K_X+kf^*(H))\cdot R> 0
$$
holds for every extremal ray $R$ and for all $k\geq a+\frac{a+2n}{t_0}$. By Kleiman's Ampleness Criterion, 
$-K_X+kf^*(H)$ is ample for all $k\geq a+\frac{a+2n}{t_0}$. We may take 
$$
N'(n,d,a,\epsilon)=a+\frac{a+2n}{\min\{\lambda(n-1,d, a+1, \epsilon), \lambda(n-1,1, 0, \epsilon)\}}
$$
and complete the proof.
\end{proof}

\begin{lem}\label{lemma birational}
Fix integers $n>m>0$, and $L>0$. Assume ${\rm BrTBAB}_{n-m}$ holds.
If $f:X\to Z$ is a Fano fibration with $\dim X=n$ and $\dim Z=m$, $H$ is a very ample divisor on $Z$ such that $-K_X+Lf^*(H)$ is ample, then $|-r_{n-m}K_X+kf^*(H)|$ gives a birational map for all $k\geq (r_{n-m}+1)L+2n-2$.
\end{lem}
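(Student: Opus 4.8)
The plan is to deduce ``$|{-}r_{n-m}K_X+kf^*(H)|$ gives a birational map'' from the assertion that this system separates a general pair of points of $X$, and to prove the latter by induction on $\dim Z$, cutting $X$ by one general member of $|H|$ at each step so that the relative dimension $c:=n-m$ is preserved. Write $r=r_{c}$ and $D=-rK_X+kf^*(H)$; since $D=r(-K_X+Lf^*(H))+(k-rL)f^*(H)$ with $k-rL\ge 2n-2>0$, the divisor $D$ is ample. The base case is $\dim Z=0$: then $H=0$, $X$ is a $c$-dimensional terminal Fano variety, and $|{-}r_cK_X|$ gives a birational map by ${\rm BrTBAB}_c$, so the assertion holds for every $k$.

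For the inductive step let $\dim Z=m\ge 1$ and let $p_1,p_2$ be a general pair of points of $X$. Choose $Z_1\in|H|$ through $f(p_1)$ and $f(p_2)$, general among such; since $H$ is very ample, for general $Z_1$ the divisor $X_1:=f^*(Z_1)$ is a normal projective variety with terminal singularities, $f_1:=f|_{X_1}\colon X_1\to Z_1$ is again a Fano fibration of relative dimension $c$ with $\dim Z_1=m-1$, and $p_1,p_2$ form a general pair of points of $X_1$. By adjunction $K_{X_1}=(K_X+X_1)|_{X_1}$ and $X_1|_{X_1}=f_1^*(H_1)$ with $H_1:=H|_{Z_1}$, so $(-K_X+Lf^*(H))|_{X_1}=-K_{X_1}+(L+1)f_1^*(H_1)$ is ample, and $D|_{X_1}=-rK_{X_1}+(k+r)f_1^*(H_1)$. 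Since $k+r\ge (r+1)(L+1)+2(n-1)-2$, the inductive hypothesis applied to $(f_1,H_1,L+1,k+r)$ shows that $|D|_{X_1}|$ gives a birational map on $X_1$.

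To propagate this to $X$, write $D-X_1=K_X+M$ where
\[
M=-(r+1)K_X+(k-1)f^*(H)=(r+1)\bigl(-K_X+Lf^*(H)\bigr)+\bigl(k-1-(r+1)L\bigr)f^*(H);
\]
as $k-1-(r+1)L\ge 2n-3\ge 1$, the divisor $M$ is ample, so Kawamata--Viehweg vanishing on the klt variety $X$ gives $H^1(X,\OO_X(D-X_1))=0$ and hence the restriction $H^0(X,D)\to H^0(X_1,D|_{X_1})$ is surjective. Therefore the trace of $|D|$ on $X_1$ is the full system $|D|_{X_1}|$, which separates $p_1$ and $p_2$; lifting a section that separates them gives a section of $D$ separating $p_1$ and $p_2$ on $X$. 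Moreover $\operatorname{Bs}|D|\cap X_1\subseteq\operatorname{Bs}(|D|_{X_1}|)\subsetneq X_1$, and the $X_1$'s cover $X$, so a general point of $X$ lies off $\operatorname{Bs}|D|$. Hence $|D|$ gives a birational map.

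Two points need care, and I expect them to be where the real work lies. First, when $\dim Z=1$ a general member of $|H|$ is a disjoint union of fibres rather than an irreducible variety, so there one restricts instead to a single general fibre $F$ of $f$ — a $c$-dimensional terminal Fano variety with $D|_F\sim -r_cK_F$ — and runs the same vanishing and base-locus analysis, invoking ${\rm BrTBAB}_c$ directly (this is really the base case again); when in addition $f(p_1)\ne f(p_2)$ one separates the two points using a member of $|f^*(H)|$ through $f(p_1)$ missing $f(p_2)$, together with the nonemptiness of $|D-f^*(H)|$, itself obtained by the same lifting to a general fibre. Second, $-r_cK_X$ is in general only $\bQ$-Cartier, so the vanishing must be invoked for the reflexive sheaf $\OO_X(D-X_1)$ — for instance after passing to a small $\bQ$-factorialization or a resolution — and the Bertini-type statements used above (irreducibility, normality and terminality of $X_1$, that $f_1$ is a Fano fibration, and that $p_1,p_2$ stay general on $X_1$) must be verified. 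The numerical bound $(r_{n-m}+1)L+2n-2$ is chosen precisely so that it is preserved through all $m$ cuts while leaving enough positivity for the vanishing at every stage.
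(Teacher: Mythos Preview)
Your proposal is correct and follows essentially the same strategy as the paper: induction on $m=\dim Z$, using Kawamata--Viehweg vanishing to lift sections from a hyperplane slice $X_1\in|f^*H|$, with the relative dimension $c=n-m$ preserved so that ${\rm BrTBAB}_{n-m}$ is invoked only at the bottom.

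The paper organizes the argument slightly differently. It takes $m=1$ (not $m=0$) as the base case and there restricts to \emph{two} general fibres simultaneously via
\[
0\to \OO_X(D-F_1-F_2)\to \OO_X(D)\to \OO_{F_1}(-r_{n-m}K_{F_1})\oplus\OO_{F_2}(-r_{n-m}K_{F_2})\to 0,
\]
which in one stroke separates fibres and is birational on each; this is exactly the special handling of $m=1$ you outline in your caveats. In the inductive step the paper chooses a \emph{general} $X_1\in|f^*H|$ rather than one passing through two prescribed points: from the surjection it first deduces that $|D|$ is nonempty, and then that $|D+f^*H|$ both separates general members of $|f^*H|$ (via $D_0+X_1$) and restricts birationally to a general $X_1$, hence is birational on $X$. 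This avoids the incidence-variety verification you flag (that $p_1,p_2$ remain general on $X_1$) at the cost of a ``$+1$'' in $k$, which is already absorbed in the stated bound. The two routes are equivalent in content and yield the same numerical threshold $(r_{n-m}+1)L+2n-2$.
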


\begin{proof} Let $f:X\to Z$ be a Fano fibration with $\dim X=n$ and $\dim Z=m$, $H$ a very ample divisor on $Z$ such that $-K_X+Lf^*(H)$ is ample.

If $m=\dim Z=1$, take a general fiber $F$ of $f$, then $F$ is a  terminal Fano variety of dimension $n-m$. By ${\rm BrTBAB}_{n-m}$, $|-r_{n-m}K_F|$ gives a birational map.
For two general fibers $F_1$ and $F_2$, for an integer $k\geq (r_{n-m}+1)L+2$, consider the short exact sequence
\begin{align*}
0\to {}&\OO_X(-r_{n-m}K_X+kf^*(H)-F_1-F_2)\to \OO_X(-r_{n-m}K_X+kf^*(H))\\
\to{}& \OO_{F_1}(-r_{n-m}K_{F_1})\oplus   \OO_{F_2}(-r_{n-m}K_{F_2})\to 0.
\end{align*}
Since $k\geq (r_{n-m}+1)L+2$, 
$$-(r_{n-m}+1)K_X+kf^*(H)-F_1-F_2$$
is ample, hence by Kawamata--Viehweg vanishing theorem,
\begin{align*}
{}&H^1(X, \OO_X(-r_{n-m}K_X+kf^*(H)-F_1-F_2))\\
={}&H^1(X, \OO_X(K_X-(r_{n-m}+1)K_X+kf^*(H)-F_1-F_2))=0.
\end{align*}
Hence 
\begin{align*}
{}& H^0(X,\OO_X(-r_{n-m}K_X+kf^*(H)))\\
\to {}&  H^0(F_1, \OO_{F_1}(-r_{n-m}K_{F_1}))\oplus H^0(F_2,  \OO_{F_2}(-r_{n-m}K_{F_2}))\end{align*}
is surjective. Since $|-r_{n-m}K_{F_i}|$ gives a birational map on $F_i$ for $i=1,2$, $|-r_{n-m}K_X+kf^*(H)|$ gives a birational map on $X$ for all $k\geq (r_{n-m}+1)L+2$.
Note that $n\geq 2$,  the lemma is proved in this case.

Now suppose that $\dim Z>1$. 
Recall the construction in Remark \ref{const}, take a general element  $Z_1\in |H|$,  denote $X_1=f^*(Z_1)\in |f^*(H)|$, $B_1=B|_{X_1}$, and $H_1= H|_{Z_1}$. Then $f_1:X_1\to Z_1$ is a Fano fibration with $\dim X_1=n-1$ and $\dim Z_1=m-1$, $H_1$ is a very ample divisor on $Z_1$ such that 
$$-K_{X_1}+(L+1)f_1^*(H_1)\sim_\bQ (-K_{X}+Lf^*(H))|_{X_1}$$
is ample.
By induction on $m$, we may assume that 
$|-r_{n-m}K_{X_1}+kf_1^*(H_1)|$ gives a birational map for all $k\geq (r_{n-m}+1)(L+1)+2n-4$.

For an  integer $k\geq (r_{n-m}+1)L+2n-3$, consider the short exact sequence
\begin{align*}
0\to {}&\OO_X(-r_{n-m}K_X+(k-1)f^*(H))\to \OO_X(-r_{n-m}K_X+kf^*(H))\\
\to{}&
 \OO_{X_1}(-r_{n-m}K_{X_1}+(k+r_{n-m})f_1^*({H_1}))\to 0.
\end{align*}
Since $k\geq (r_{n-m}+1)L+1$, $-(r_{n-m}+1)K_X+(k-1)f^*(H)$ is ample, hence by Kawamata--Viehweg vanishing theorem,
\begin{align*}
{}&H^1(X, \OO_X(-r_{n-m}K_X+(k-1)f^*(H)))\\
={}&H^1(X, \OO_X(K_X-(r_{n-m}+1)K_X+(k-1)f^*(H)))=0.
\end{align*}
Hence 
\begin{align*}
{}& H^0(X,\OO_X(-r_{n-m}K_X+kf^*(H)))\\
\to {}& H^0(X_1, \OO_{X_1}(-r_{n-m}K_{X_1}+(k+r_{n-m})f_1^*({H_1})))
\end{align*}
is surjective. By induction hypothesis,   $|-r_{n-m}K_{X_1}+(k+r_{n-m})f_1^*({H_1})|$ gives a birational map on $X_1$ since 
$$k+r_{n-m}\geq (r_{n-m}+1)(L+1)+2n-4.$$ 
In particular, $|-r_{n-m}K_X+kf^*(H)|\neq \emptyset$. 
Hence $|-r_{n-m}K_X+(k+1)f^*(H)|$ can separate general elements in $|f^*(H)|$, and 
$$
|-r_{n-m}K_X+(k+1)f^*(H)||_{X_1}=|-r_{n-m}K_{X_1}+(k+1+r_{n-m})f_1^*({H_1})|
$$
gives a birational map on $X_1$, which is a general element in $|f^*(H)|$. This implies that $|-r_{n-m}K_X+(k+1)f^*(H)|$ gives a birational map for all $k\geq (r_{n-m}+1)L+2n-3$.

We complete the proof.
\end{proof}

\begin{proof}[Proof of Theorem \ref{thm birationality}]
It follows from Lemmas \ref{lemma ample} and \ref{lemma birational}. We may take 
$
N(n,d,a,\epsilon)=(r_{n-m}+1)\roundup{N'(n,d,a,\epsilon)}+2n-2.
$
\end{proof}

\section{Boundedness of volumes}
In this section, we prove the second part of Theorem \ref{thm main}, on boundedness of volumes. We follow the idea in \cite{Jiang}.
\begin{thm}\label{thm volume}Fix integers $n>m> 0$ and $d>0$, a rational number $a\geq 0$, $0<\epsilon<1$. Assume {\rm GA}$_{n-m}$ holds.  Then there exists a number  $V'(n,d, a, \epsilon, k)$ depending only on $n$, $d$, $a$, $\epsilon$, and $k\in \mathbb{Z}_{\geq 0}$, satisfying the following property: 

If $(f:X\to Z, B, H)$ is an $(n, d, a, \epsilon)$-Fano fibration with $\dim Z=m$,
then $\Vol(X, -K_X+kf^*(H))\leq V'(n, d, a, \epsilon, k)$.

\end{thm}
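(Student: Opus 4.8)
The plan is to argue by contradiction, using the standard mechanism that turns a large volume into linear sections of high multiplicity, organised as an induction on $\dim Z$ terminating at $\dim Z=0$. Put $D=-K_X+kf^*(H)$; if $\Vol(X,D)=0$ there is nothing to prove. The core step is an \emph{auxiliary claim}, proved by induction on $m=\dim Z$: for every $(n,d,a,\epsilon)$-Fano fibration $(f:X\to Z,B,H)$ with $\dim Z=m$, every $k\in\mathbb Z_{\ge0}$, every general point $x\in X$, and every effective $\bQ$-divisor $G\sim_\bQ t(-K_X+kf^*(H))$ with rational $t>0$ and $\mult_xG\ge\dim X$, one has $t\ge\mu(n-m,\epsilon)$. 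Granting this, suppose $\Vol(X,D)>(n/\mu(n-m,\epsilon))^n$. Then for any rational $t$ with $n\,\Vol(X,D)^{-1/n}<t<\mu(n-m,\epsilon)$ — such $t$ exist exactly under this hypothesis — we have $\Vol(X,tD)=t^n\Vol(X,D)>n^n$, so by the usual argument (asymptotic Riemann–Roch together with the count of multiplicity conditions, cf. \cite{HM}) a general point $x$ carries an effective $G\sim_\bQ tD$ with $\mult_xG\ge n$; by Lemma \ref{dimk}, $\{x\}$ is then a non-klt center of $(X,G)$. This contradicts the auxiliary claim, so $\Vol(X,D)\le(n/\mu(n-m,\epsilon))^n$, and this number serves as $V'(n,d,a,\epsilon,k)$.

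For the base case $\dim Z=0$ we have $H=0$ and $G\sim_\bQ-tK_X$; moreover $(X,B)$ is an $\epsilon$-klt log Fano pair with $X$ a terminal Fano, and $n-m=\dim X$, so ${\rm GA}_{n-m}$ gives $\alpha(X,B)\ge\mu(n-m,\epsilon)$. The point that makes this go through is that a \emph{general} point $x$ lies off $\Supp(B)$, hence $\mult_xB=0$. Consequently $\tfrac1tG-B\sim_\bQ-(K_X+B)$ is an admissible test divisor in the definition of $\alpha(X,B)$ (its sum with $B$ equals $\tfrac1tG\ge0$), and for every $s\ge t$ one computes $\mult_x\bigl((1-s)B+\tfrac stG\bigr)=\tfrac st\,\mult_xG\ge\dim X=\operatorname{codim}\{x\}$; thus $\bigl(X,B+s(\tfrac1tG-B)\bigr)$ is not klt for $s\ge t$ and, blowing up $x$, not lc for $s>t$. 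Hence $\glct(X,B;\tfrac1tG-B)\le t$, and therefore $\mu(n-m,\epsilon)\le\alpha(X,B)\le t$.

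For the inductive step $\dim Z=m\ge1$ I restrict to a codimension-one section on which the claim applies one dimension lower. If $m\ge2$, take $X_1=f^{-1}(Z_1)$ for a general $Z_1\in|H|$: by Remark \ref{const} it is a prime Cartier divisor, $(f_1:X_1\to Z_1,B_1,H_1)$ is an $(n-1,d,a+1,\epsilon)$-Fano fibration with $\dim Z_1=m-1$, and $(-K_X+kf^*(H))|_{X_1}\sim_\bQ-K_{X_1}+(k+1)f_1^*(H_1)$. If $m=1$, take instead a general fibre $F=f^{-1}(z)$, also a prime Cartier divisor: by Remark \ref{const} $F$ is normal and terminal, $(F\to\mathrm{pt},B|_F,0)$ is an $(n-1,1,0,\epsilon)$-Fano fibration, and $(-K_X+kf^*(H))|_F\sim_\bQ-K_F$. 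In either case, after a routine general-position argument ensuring the chosen section is not a component of $G$, pick $x$ general on it and restrict $G$: the multiplicity cannot drop, $\mult_x(G|_{X_1})\ge\mult_xG\ge\dim X>\dim X_1$, so $G|_{X_1}$ witnesses the hypothesis of the auxiliary claim for the fibration $f_1$, whose conclusion reads $t\ge\mu\bigl((n-1)-(m-1),\epsilon\bigr)=\mu(n-m,\epsilon)$. Since the fibre dimension $n-m$ is preserved at every step, ${\rm GA}_{n-m}$ is the only instance of ${\rm GA}$ ever invoked, matching the hypothesis of the theorem.

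The two points that need care are: (i) checking via Remark \ref{const} that restriction stays inside the class of $(n,d,a,\epsilon)$-Fano fibrations with the fibre dimension unchanged, so that the induction both makes sense and terminates cleanly at $\dim Z=0$ — this is routine; and (ii) the general-position technicalities, namely that the restricting section can be chosen away from $\Supp(G)$ so that $G|_{X_1}$ is a genuine effective $\bQ$-divisor of the expected class — immediate when $m\ge2$ since $|f^*(H)|$ is base-point-free, but requiring a small extra argument for fibres when $m=1$. I expect (ii) to be the only genuinely delicate part; everything else is the standard volume/multiplicity toolbox plus bookkeeping. (One could instead peel off $f^*(H)$ via Lemma \ref{lemma volume} to lower $\dim Z$, but then one must separately bound the residual $\Vol(X,-K_X)$, which comes down to essentially the same $\alpha$-invariant argument, so the route above seems cleanest.)
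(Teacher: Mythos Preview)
Your auxiliary claim is false: the bound $(n/\mu(n-m,\epsilon))^n$ you derive is independent of $k$, $d$, $a$, but the volume genuinely grows with these parameters. Take $X=\bP^1\times\bP^{n-1}\to\bP^1$, $B=0$, $H=\OO_{\bP^1}(1)$, $a=0$; this is an $(n,1,0,\epsilon)$-Fano fibration and $\Vol(X,-K_X+kf^*(H))=n(2+k)n^{n-1}$ is unbounded in $k$. Explicitly, for any point $x=(p,q)$ set $G=t\bigl((2+k)F_x+nL\bigr)$ with $F_x=\{p\}\times\bP^{n-1}$ and $L=\bP^1\times(\text{hyperplane through }q)$; then $G\sim_\bQ tD$ and $\mult_xG=t(2+k+n)\ge n$ as soon as $t\ge n/(2+k+n)\to0$. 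The error is exactly the point you flagged as needing ``a small extra argument'' for $m=1$: there is only \emph{one} fibre through $x$, and $G$ may contain it with coefficient as large as $t(2+kd)$. Subtracting this vertical part before restricting leaves only multiplicity of order $tn$ on $F$, far short of $\dim F=n-1$ when $t$ is small. No general-position manoeuvre repairs this, since the vertical component can absorb an arbitrarily large share of $\mult_xG$; the $m\ge2$ step is fine in isolation, but the induction must pass through $m=1$.

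The paper's argument is built to handle precisely this obstruction and proceeds quite differently. It first peels off the vertical direction via Lemma~\ref{lemma volume}: if $\Vol(X,-K_X+kf^*(H))$ is large compared to the fibre (or hyperplane-section) volume, then $\Vol(X,-K_X-wf^*(H))>0$ for some large $w$, yielding an effective $B'\sim_\bQ-K_X-wf^*(H)$. It then applies the Connectedness Lemma to $(X,(1-s)B+sB'+F_1+F_2)$ with $s=(ad+2)/(ad+w)$ when $m=1$ (and a more elaborate construction when $m\ge2$) to force a non-klt center that \emph{dominates} $Z$. This horizontality is the key gain over your single-point approach: inversion of adjunction transports non-klt-ness to a \emph{general} fibre $F$, where $B'|_F-B|_F\sim_\bQ-(K_F+B|_F)$ and ${\rm GA}_{n-m}$ bounds $s$ from below, hence $w$ from above. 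The resulting $V'$ legitimately depends on $k$, $d$, and $a$, as it must.
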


\begin{remark}\label{rem ga}
Before proving the theorem, we remark that  {\rm GA}$_{n}$ implies the boundedness of anti-canonical volumes of terminal Fano variety of dimension $n$. In fact, let $Y$ be a terminal Fano variety of dimension $n$, then $(Y, 0)$ is a $\frac{1}{2}$-klt log Fano pair. By {\rm GA}$_{n}$, $\alpha(Y,0)\geq \mu(n,\frac{1}{2})>0$. On the other hand, it is well-known that $\alpha(Y,0)\cdot \sqrt[n]{(-K_Y)^n}\leq n$ (cf. \cite[6.7.1]{SOP}). Hence $(-K_Y)^n$ is bounded from above uniformly. We denote the bound to be $M'_0(n)$.

\end{remark}
\begin{proof}[Proof of Theorem \ref{thm volume}]
Let $(f:X\to Z, B, H)$ be an $(n, d, a, \epsilon)$-Fano fibration.

If $m=\dim Z=1$,  for a general fiber $F$  of $f$, $F$ is a  terminal Fano variety of dimension $n-m$. By Remark \ref{rem ga},  $\Vol(F, -K_F)\leq M'_0(n-m)$.
Fix $k\geq 0$, assume that for some $w>0$,
$$\Vol(X, -K_X+kf^*(H))>n(dk+w)M'_0(n-m).$$
It suffices to find an upper bound for $w$.
We may assume that $w>2$. Note that $f^*(H)\sim_\bQ dF$.
By Lemma \ref{lemma volume},
\begin{align*}
{}&\Vol(X, -K_X-wF)\\
\geq{}& \Vol(X, -K_X+kf^*(H))-n(dk+w)\Vol(F, -K_F)>0.
\end{align*}
Hence there exists an effective $\bQ$-divisor $B'\sim_\bQ-K_X-wF$.
For two general fibers $F_1$ and $F_2$, consider the pair $(X, (1-s)B+sB'+F_1+F_2)$ where $s=\frac{ad+2}{ad+w}<1$. Note that 
$$-(K_{X}+ (1-s)B+sB'+F_1+F_2)\sim_\bQ(1-s)A$$
is ample.
By Connectedness Lemma,  
$\Nklt(X, (1-s)B+sB'+F_1+F_2)$ is connected. On the other hand, it contains $F_1\cup F_2$, hence contains a non-klt center dominating $Z$. By inversion of adjunction, 
$(F, (1-s)B|_F+sB'|_F)$ is not klt for a general fiber $F$. On the other hand, $(F, B|_F)$ is an $\epsilon$-klt log Fano pair of dimension $n-m$, $F$ is a  terminal Fano variety, and $B'|_F-B|_F\sim_\bQ-(K_F+B|_F)$.
Hence by {GA}$_{n-m}$, $$s\geq \ulct(F, B|_F; B'|_F-B|_F) \geq \mu(n-m, \epsilon).$$ Hence $w\leq \frac{ad+2}{\mu(n-m,\epsilon)}-ad$. In this case, we may take 
$$
V'(n, d, a, \epsilon, k)=n\Big(dk+ \frac{ad+2}{\mu(n-m,\epsilon)}-ad\Big)M'_0(n-m).
$$

Now assume that $\dim Z>1$. 
As constructed in Remark \ref{const}, $(f_1:X_1\to Z_1, B_1, H_1)$ is an $(n-1, d, a+1, \epsilon)$-Fano fibration.
By induction, we may assume that $\Vol(X_1, -K_{X_1}+kf_1^*(H_1))\leq V'(n-1, d, a+1, \epsilon, k)$.
Fix $k\geq 0$, assume that for some $w>0$,
$$\Vol(X, -K_X+kf^*(H))>n(k+w)V'(n-1, d, a+1, \epsilon, k+1).$$
It suffices to find an upper bound for $w$.
We may assume that $w>m+1$.
By Lemma \ref{lemma volume},
\begin{align*}
{}&\Vol(X, -K_X-wX_1)\\
\geq {}& \Vol(X, -K_X+kf^*(H))-n(k+w)\Vol(X_1, -K_X|_{X_1}+kf^*(H)|_{X_1})\\
={}& \Vol(X, -K_X+kf^*(H))-n(k+w)\Vol(X_1, -K_{X_1}+(k+1)f_1^*(H_1)) >0.
\end{align*}
Hence there exists an effective $\bQ$-divisor $B'\sim_\bQ-K_X- wX_1$.
Take $s=\frac{a+m+1}{a+w}<1$.
For a general fiber $F_1$ over $z_1\in Z$, then there exists a number $\delta>0$ (cf. \cite[4.8]{SOP}) such that for any general $H'\in |H|$ containing $z_1$, 
$$\Nklt(X, (1-s)B+sB')=\Nklt(X, (1-s)B+sB'+\delta f^*(H')).$$
We may take general $H^{j}\in |H|$ containing $z_1$ for $1\leq j\leq J$ with $J>\frac{m}{\delta}$  and take $G_1=\sum_{j=1}^J\frac{m}{J}f^*(H^{j})$. Then $\mult_{F_1}G_1\geq m$ and $G_1\sim_\bQ mf^*(H)\sim_\bQ mX_1$. In particular, $(X, G_1)$ is not klt at $F_1$ and by construction, in a neighborhood of $F_1$,
\begin{align*}
{}&\Nklt(X, (1-s)B+sB')\cup F_1\\
={}&\Nklt(X, (1-s)B+sB'+G_1).
\end{align*}
Take a general element $G_2\in |f^*(H)|$, consider the pair $(X, (1-s)B+sB'+G_1+G_2)$ where $s=\frac{a+m+1}{a+w}<1$. 
Then
$$-(K_{X}+ (1-s)B+sB'+G_1+G_2)\sim_\bQ(1-s)A$$
is ample.
Since 
$$F_1\cup G_2\subset \Nklt(X, (1-s)B+sB'+G_1+G_2),$$
 by Connectedness Lemma,   there is a curve $C$ contained in $\Nklt(X, (1-s)B+sB'+G_1+G_2)$, intersecting $F_1$ and not contracted by $f$. Hence $C$ is  contained in $\Nklt(X, (1-s)B+sB')$ by the construction of $G_1$ and generality of $G_2$. Since $C$ intersects $F_1$, so does $\Nklt(X, (1-s)B+sB')$. Since $F_1$ is  a general fiber over $Z$, $\Nklt(X, (1-s)B+sB')$ dominates $Z$. 
 By inversion of adjunction, 
$(F, (1-s)B|_F+sB'|_F)$ is not klt for a general fiber $F$. On the other hand, $(F, B|_F)$ is an $\epsilon$-klt log Fano pair of dimension $n-m$ and $F$ is a  terminal Fano variety.
Hence by {GA}$_{n-m}$,
 $$s\geq \ulct(F, B|_F, B'|_F-B|_F) \geq \mu(n-m, \epsilon).$$
Hence $w\leq \frac{a+m+1}{\mu(n-m,\epsilon)}-a$. We may take
$$
V'(n,d,a,\epsilon, k)=n\Big(k+\frac{a+m+1}{\mu(n-m,\epsilon)}-a\Big)V'(n-1, d, a+1, \epsilon, k+1)
$$
inductively, and complete the proof.
\end{proof}

\section{Lower bound of log canonical thresholds in dimension two}\label{section lct}
In this section, we consider LCTB$_2$. Firstly, we prove the following general theorem for surfaces. The basic idea of proof comes from \cite{AM} and \cite{Jiang}, but we are in a totally different situation from \cite{Jiang}.
\begin{thm}\label{thm surface}Fix $m>0$ and  $0<\epsilon<1$. Then there exists a number $\lambda'(m,\epsilon)>0$ depending only  on $m$ and $\epsilon$ satisfying the following property:

If $T$ is a projective smooth surface and $B=\sum_ib_iB^i$ an effective $\bQ$-divisor on $T$ such that 
\begin{enumerate}
\item $(T, B)$ is $\epsilon$-klt, but $(T, (1+t)B)$ is not klt for some $t>0$;
\item $K_T+B\sim_\bQ G-A$ where $A$ is an ample $\bQ$-divisor and $G$ is a nef $\bQ$-divisor on $T$;
\item  $\sum_ib_i\leq m$;
\item $(B)^2\leq m$, $B\cdot G\leq m$.
\end{enumerate}
Then $t>\lambda'(m,\epsilon)$.
\end{thm}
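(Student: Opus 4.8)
The plan is to reduce the statement to an upper bound on the coefficient of $B$ along a single divisorial valuation, and then to bound that coefficient by analysing the tower of point blow-ups that computes it. First I set $c=\lct(T,0;B)$. Since $(T,B)$ is $\epsilon$-klt we have $c>1$, and since $(T,(1+t)B)$ is not klt we have $c\le 1+t$, so $t\ge c-1$ and it is enough to bound $c-1$ away from $0$. The pair $(T,cB)$ is lc but not klt, hence there is a prime divisor $E$ over $T$ with $a_E(T,cB)=-1$. Writing $v:=\mathrm{ord}_E(g^*B)$ for a log resolution $g$, the identity $a_E(T,cB)=a_E(T,B)-(c-1)v$ gives
\[
c-1=\frac{1+a_E(T,B)}{v}>\frac{\epsilon}{v},
\]
using $a_E(T,B)>-1+\epsilon$. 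So everything reduces to producing a bound $v\le M(m,\epsilon)$.

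If the center of $E$ is a divisor, that divisor is a component $B^i$ with $cb_i=1$, and $\epsilon$-kltness forces $b_i<1-\epsilon$, whence $c-1>\epsilon/(1-\epsilon)$ and we are done. So assume the center of $E$ is a point $P$, that no component of $B$ is a non-klt center of $(T,cB)$, and that $c\le 2$ (otherwise $t\ge 1$); the last assumption makes $\mult_Q B<2$ for every $Q$ by lc-ness on the smooth surface $T$. Let $T=T_0\leftarrow T_1\leftarrow\cdots\leftarrow T_k=Y$ be the minimal chain of point blow-ups realizing $E=E_k$, with centers $P_i$ ($P_0=P$), strict transforms $B_i$ of $B$, and $\mu_i=\mult_{P_i}B_i$. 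Then
\[
v_{E_{i+1}}=\mu_i+\sum_{j\in S_i}v_{E_j},\qquad e_{i+1}=c\mu_i+\sum_{j\in S_i}e_j-1,
\]
where $e_j:=-a_{E_j}(T,cB)$ and $S_i$ is the set (of size at most $2$) of previously created exceptional curves through $P_i$; moreover $e_j\le 1$, $e_k=1$, and $(c-1)v_{E_j}=e_j+a_{E_j}(T,B)>e_j-1+\epsilon$. The indices $i$ with $\mu_i\ge 1$ form an initial segment $0,\dots,l-1$ (once a center leaves the strict transform of $B$, so do all later ones), and on the ``free'' part $i\ge l$ one has $e_{i+1}=e_{j_1}+e_{j_2}-1\le\max(e_{j_1},e_{j_2})$, so a curve with $e=1$ there has both of its immediate predecessors of value $e=1$; tracing back from $E_k$ produces a binary tree of lc places whose leaves lie among $E_1,\dots,E_l$. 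Feeding this into the recursion for $v_{E_\bullet}$ reduces the bound on $v=v_{E_k}$ to a bound on the single integer $l$, the number of blow-ups occurring on the strict transform of $B$.

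The bound on $l$ is the heart of the matter and the step I expect to be the main obstacle: morally, conditions (2)--(5) must force the singularities of $B$ near $P$ to have bounded complexity. One ingredient is the drop of self-intersection, $B_l^2=B^2-\sum_{i<l}\mu_i^2\le m-l$, which converts a bound on $l$ into a lower bound for $B_l^2$; the conditions $K_T+B\sim_\bQ G-A$ with $A$ ample, $B\cdot G\le m$, $B^2\le m$ are meant to supply such a lower bound, via the genus formula applied to each component of $B$. The genuinely delicate point is that a component of $B$ with a very small coefficient can be badly singular at $P$ while contributing almost nothing to $B^2$, so a naive self-intersection count is not enough. To get around this I would fix a threshold $\eta=\eta(\epsilon)>0$ and write $B=B'+B''$ with $B'$ (resp.\ $B''$) the part of $B$ with coefficients $\ge\eta$ (resp.\ $<\eta$): the number of components of $B'$ through $P$ is bounded by $m/\eta$ using $\mult_P B\le m$, and together with $B^2\le m$ one bounds the number of blow-ups needed to make $B'$ simple normal crossing over $P$; on that bounded partial resolution the transform of $B''$ has simple normal crossing support, coefficients $<\eta$, and multiplicity $<2$ at the relevant points, so an application of Lemma \ref{multi} shows it cannot by itself produce a non-klt place there. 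Hence the non-klt place $E$ is already visible on the bounded partial resolution, which bounds $l$, and therefore $v$.

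Combining $l\le l(m,\epsilon)$ with the recursions yields $v=v_{E_k}\le M(m,\epsilon)$, and hence $t\ge c-1>\epsilon/M(m,\epsilon)=:\lambda'(m,\epsilon)>0$, as wanted. The remaining work --- keeping track of how $\mult_P B$, $B^2$, $B\cdot G$ and the discrepancies transform under the tower, and the combinatorics of the at-most-two exceptional curves meeting each center --- is routine once the structure above is fixed; the weight is carried by the interplay of the global numerical bounds with the local resolution and by the separation of $B$ into its small- and large-coefficient parts.
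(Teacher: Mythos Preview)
Your reduction to bounding $v=\mathrm{ord}_E(\pi^*B)$ for an lc place $E$ is clean, and the plan to control the blow-up tower via self-intersection drops and the genus formula is the right engine --- it is also what drives the paper's proof. But two steps do not go through as written.

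First, the ``free part'' analysis breaks. You define the free part by $\mu_i<1$, yet the identity $e_{i+1}=e_{j_1}+e_{j_2}-1$ you invoke requires $\mu_i=0$; for $0<\mu_i<1$ the extra term $c\mu_i$ survives and the conclusion $e_{i+1}\le\max(e_{j_1},e_{j_2})$ is false. Even on the genuine $\mu_i=0$ tail, the recursion $v_{E_{i+1}}=v_{E_i}+v_{E_{j_i}}$ permits Fibonacci-type growth, so a bound on $l$ alone does not bound $v_{E_k}$: your binary-tree-of-lc-places observation shows at best that some lc place already appears among the $E_j$ created while the centres still lie on $\Supp B$, but the length of \emph{that} initial segment is precisely the quantity you have not bounded. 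Second, in the $B=B'+B''$ step the claim that the transform of $B''$ acquires simple normal crossing support on the partial resolution is unjustified --- the small-coefficient part can be arbitrarily singular at $P$, and resolving $B'$ does nothing to it --- so Lemma~\ref{multi} does not apply in the way you want.

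The paper shares your tower-of-blow-ups skeleton but handles both difficulties with one device. It stops the tower at the largest index $s$ for which $\mult_{P_{k-1}}B_{k-1}\ge\epsilon/2$ \emph{and} all discrepancies $e'_l$ of $(T,B)$ stay above $-\epsilon^2/4$. The second condition is exactly what makes the genus-formula lower bound on each $(B_s^i)^2$ uniform in $b_i$, dissolving your small-coefficient worry without any $B'/B''$ split; one then gets $s<32m/\epsilon^3$. Rather than bounding $v$, the paper shows (Claim~\ref{QQQ}) that some point $Q_s$ on $T_s$ already satisfies $\mult_{Q_s}\pi_s^*(tB)\ge\epsilon^2/4$; pushing this down to $T$ and using hypothesis~(5) gives the lower bound on $t$ directly, so the tail of the tower is never confronted.
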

\begin{proof}
Take $(T, B)$ as in the theorem. 
Since $(T, B)$ is $\epsilon$-klt, $b_i<1-\epsilon$ for all $i$. We may assume that $t<\epsilon$ since we want to bound $t$ from below, and hence $(1+t)b_i<1$ for all $i$.
Since $(T, (1+t)B)$ is not klt, it has isolated non-klt centers. We may take a sequence of point blow-ups 
$$
T_{r}\rightarrow T_{r-1}\rightarrow \cdots \rightarrow T_{1}\rightarrow T_{0}=T
$$
where $T_{k+1}\rightarrow T_k$ is the blow-up at a non-klt center $P_k\in \Nklt(T_k, (1+t)B_k+E_k)$ where $B_k$
is the strict transform of $B$ on $T_k$ and 
$$
K_{T_k}+(1+t)B_k+E_k=\pi_k^*(K_T+(1+t)B),
$$
where $\pi_k:T_k\rightarrow T$ is the composition map and $E_k$ is a $\pi_k$-exceptional $\bQ$-divisor. For $k\geq l$, denote $\pi_{k,l}$ to be the composition map $T_k\to T_l$ and $E_k^l$ be the strict transform of the exceptional divisor $E^l$ of $\pi_{l,l-1}$ on $T_k$. Then we can write 
$E_k=\sum_{l=1}^ke_lE_k^l$.
For $l\geq 1$, since $P_{l-1}$ is a non-klt center of $(T_{l-1}, (1+t)B_{l-1}+E_{l-1})$, $$e_l=\mult_{P_{l-1}}((1+t)B_{l-1}+E_{l-1})- 1\geq 0.$$
We stop this process at $T_{r}$ if $\rounddown{E_r}\neq 0$.
Furthermore, we may assume that $\mult_{P_k}B_k$ is non-increasing. 
Write 
$$
K_{T_k}+B_k+E'_k=\pi_k^*(K_T+B),
$$
where $E'_k=\sum_{l=1}^ke'_lE_k^l$, note that $e'_l$ may be negative. 
Take the integer $s$ such that 
$$s=\max\{k\leq r\mid \mult_{P_{k-1}}B_{k-1}\geq  \frac{\epsilon}{2} \text{ and } e'_l>-\frac{\epsilon^2}{4} \text{ for all } l\leq k\}.$$
Recall that $B_s=\sum_i b_i B_s^i$ with $0\leq b_i<1-\epsilon$ where $B_s^i$ is the strict transform of $B^i$ on $T_s$ for all $i$.

\begin{claim}
$(B_{s}^i)^2\geq -\frac{2}{\epsilon}-\frac{1}{\epsilon}G\cdot B^i-\frac{\epsilon}{4}\sum_{l=1}^s\mult_{P_{l-1}}B_{l-1}^i$ for all $i$.
\end{claim}
\begin{proof}
Suppose that $(B_{s}^i)^2<0$, then
\begin{align*}
-2\leq{}&
2p_a(B_{s}^i)-2=(K_{T_{s}}+B_{s}^i)\cdot B_{s}^i\\
={}&{\epsilon} (B_{s}^i)^2+(K_{T_{s}}+(1-{\epsilon} )B_{s}^i)\cdot B_{s}^i\\
\leq{}&{\epsilon}  (B_{s}^i)^2+(K_{T_{s}}+ B_{s}+E'_{s})\cdot B_{s}^i-E'_{s}\cdot B_{s}^i\\
={}&{\epsilon} (B_{s}^i)^2+(K_T+B)\cdot  B^i-\sum_{l=1}^se'_lE_s^l\cdot B_{s}^i\\
\leq{}&{\epsilon} (B_{s}^i)^2+G\cdot B^i+\frac{\epsilon^2}{4}\sum_{l=1}^sE_s^l\cdot B_{s}^i\\
\leq{}&{\epsilon} (B_{s}^i)^2+G\cdot B^i+\frac{\epsilon^2}{4}\sum_{l=1}^sE_s^l\cdot \pi_{s, l}^*B_{l}^i\\
={}&{\epsilon} (B_{s}^i)^2+G\cdot B^i+\frac{\epsilon^2}{4}\sum_{l=1}^sE^l\cdot B_{l}^i\\
={}&{\epsilon}(B_{s}^i)^2+G\cdot B^i+\frac{\epsilon^2}{4}\sum_{l=1}^s\mult_{P_{l-1}}B_{l-1}^i.
\end{align*}
Hence we proved the claim.
\end{proof}

Now we can give an upper bound for $s$.

On $T_{s}$, we have
\begin{align*}
(B_{s})^2={}&(\sum_i b_iB_{s}^i)^2\geq \sum_i b_i^2(B_{s}^i)^2\\
\geq  {}&  \sum_i b_i^2\Big(-\frac{2}{\epsilon}-\frac{1}{\epsilon}G\cdot B^i-\frac{\epsilon}{4}\sum_{l=1}^s\mult_{P_{l-1}}B_{l-1}^i\Big)\\
\geq  {}&  \sum_i b_i\Big(-\frac{2}{\epsilon}-\frac{1}{\epsilon}G\cdot B^i-\frac{\epsilon}{4}\sum_{l=1}^s\mult_{P_{l-1}}B_{l-1}^i\Big)\\
\geq {}&-\frac{2m}{\epsilon}- \frac{1}{\epsilon}G\cdot B -  \frac{\epsilon}{4}\sum_{l=1}^s\mult_{P_{l-1}}B_{l-1}\\
\geq {}&-\frac{3m}{\epsilon}-  \frac{\epsilon}{4}\sum_{l=1}^s\mult_{P_{l-1}}B_{l-1}.\end{align*}
On the other hand, 
$(B_{s})^2=(B)^2-\sum_{l=1}^s(\mult_{P_{l-1}}B_{l-1})^2$ and $(B)^2\leq m$. Hence
$$
m+ \frac{3m}{\epsilon} \geq \sum_{l=1}^s\mult_{P_{l-1}}B_{l-1}\big(\mult_{P_{l-1}}B_{l-1}-\frac{\epsilon}{4}\big)\geq \frac{\epsilon^2}{8}s
$$
by the assumption $\mult_{P_k}B_k\geq \frac{\epsilon}{2}$ for $k< s$. Hence 
$$
{s}< \frac{32m}{\epsilon^3}.
$$

\begin{claim}\label{QQQ}There exists a point $Q_s$ on $T_{s}$ such that  $\mult_{Q_s}\pi_{s}^*(tB)\geq \frac{\epsilon^2}{4}$.
\end{claim}
\begin{proof}
Consider the pair $(T_{s}, (1+t)B_{s}+E_{s})$.  Note that $E_{s}$ is simple normal crossing supported.

Assume that there exists a curve $E$ with coefficient at least $1-\frac{3\epsilon}{4}$ in $E_{s}$, that is, $$\mult_E(K_{T_{s}}-\pi_{s}^*(K_T+(1+t)B))\leq -1+\frac{3\epsilon}{4}.$$
 On the other hand, since $(T, B)$ is $\epsilon$-klt, 
$$\mult_E(K_{T_{s}}-\pi_{s}^*(K_T+B))> -1+\epsilon.$$
Hence $\mult_E\pi_{s}^*(tB)\geq  \frac{\epsilon}{4}\geq \frac{\epsilon^2}{4}$ and we can take any point $Q_s\in E$.

If all coefficients of $E_{s}$ are smaller than $1-\frac{3\epsilon}{4}$,  then $s<r$ and $P_{s}$ is a non-klt center of  $(T_{s}, (1+t)B_{s} +E_{s})$. By Lemma \ref{multi}, $\mult_{P_{s}}((1+t)B_{s}) \geq \frac{3\epsilon}{4}$.  Since we need a lower bound for $t$, we may assume that $t<\frac{1}{2}$, then $\mult_{P_{s}}B_{s} \geq  \frac{\epsilon}{2}$. Therefore, by the maximality of $s$,  $e'_{s+1}\leq -\frac{\epsilon^2}{4}$. Then 
$$
\mult_{P_{s}}\pi^*_s(tB)=\mult_{E_{s+1}}\pi^*_{s+1}(tB)=e_{s+1}-e'_{s+1}\geq \frac{\epsilon^2}{4}.
$$
We can take $Q_s=P_{s}$.

We proved the claim.
\end{proof}


By Claim \ref{QQQ}, it follows that $\mult_{Q_0}(tB)\geq \frac{\epsilon^2}{2^{s+2}}$ where $Q_0=\pi_s(Q_s)$ (cf. \cite[Section 5, Claim 4]{Jiang}).
On the other hand, since $(X, B)$ is klt, $\mult_{Q_0}(B)< 2$ by Lemma \ref{dimk}.
Combining with the inequality $s< \frac{32m}{\epsilon^3}$, we have
$$
t >  \frac{\epsilon^2}{2^{32m/\epsilon^3+3}},
$$
and hence we may take this number to be $\lambda'(m,\epsilon)$. 
\end{proof}

As an application, we can prove LCTB$_2$ now.
\begin{proof}[Proof of Theorem \ref{thm LCTB2}]
Let $(f:T\to Z, B, H)$ be an almost-extremal $(2, d, a, \epsilon)$-Fano fibration. There are two cases, $\dim Z=0$ or $1$.

(1) Suppose that $\dim Z=0$. Then $T$ is a smooth del Pezzo surface, 
 $(T, B)$ is $\epsilon$-klt, and $-(K_T+B)$ is ample.
Note that $-K_T$ is ample, $-3K_T$ is very ample, and $(-K_T)^2\leq 9$ for the del Pezzo surface $T$.
 Write $B=\sum_ib_iB^i$, then 
\begin{align*} 
\sum_i b_i\leq{}& B\cdot (-K_T)\leq (-K_T)^2\leq 9;\\
(B)^2\leq{}& (-K_T)^2\leq 9.
\end{align*}
Apply Theorem \ref{thm surface} for $G=0$, then $(T, (1+t)B)$ is klt for all $0<t\leq \lambda'(9, \epsilon)$.

(2) Suppose that $\dim Z=1$. By assumption, $(T, B)$ is $\epsilon$-klt and  $$K_T+B\sim_\bQ -A+af^*(H) \sim_\bQ -A+adF,$$ where $A$ is an ample $\bQ$-divisor on $T$ and $F$ is a general fiber of $f$. Moreover, $-K_T$ is ample over $Z$, that is, $f:T\to Z$ is a {\it conic bundle} such that all fibers are plane conics, a smooth fiber is a smooth rational curve, and a singular fiber is the union of two lines intersecting at one point. Note that the assumption that $(f:T\to Z, B, H)$ is almost-extremal implies that we may write $B=\sum_ib_iB^i+\sum_jc_jF^j$, where $B^i$ is a curve not contained in a fiber for all $i$, and $F^j$ is a whole fiber for all $j$. This condition is crucial in the following claim.
Recall that $B\cdot F<(-K_T)\cdot F=2$ and $(-K_T)^2\leq 8$ for the conic bundle $T$.

\begin{claim}\label{claim bi}
$\sum_ib_i+\sum_j 2 c_j\leq 2+\frac{8+4ad}{\epsilon}$. 
\end{claim}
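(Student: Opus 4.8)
The plan is to estimate the horizontal and the vertical parts of $B$ separately. Write $B=B'+B''$ with $B'=\sum_ib_iB^i$ the horizontal part and $B''=\sum_jc_jF^j$, and let $F$ be a general fibre of $f$. Since $f$ is a conic bundle, $F\cong\PPP^1$, $F^2=0$, $(-K_T)\cdot F=2$, all fibres are numerically equivalent to $F$, $f^*H\sim_\bQ dF$, $(-K_T)^2\le 8$, and $B^i\cdot F\ge 1$ for every $i$ because $B^i$ dominates $Z$.

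First I would bound $\sum_i b_i$. Restricting the relation $-(K_T+B)\sim_\bQ A-adF$ to $F$ and using $f^*H\cdot F=0$ gives $A\cdot F=2-B\cdot F$; since $A$ is ample this forces $B\cdot F<2$, hence $\sum_ib_i\le\sum_i b_i(B^i\cdot F)=B'\cdot F=B\cdot F<2$. This produces the summand $2$ in the claimed bound.

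The substance of the claim is the estimate $c:=\sum_jc_j<ad+2$. Since each $F^j$ is a fibre, $B''\equiv cF$, so $-K_T\sim_\bQ A+B'+(c-ad)F$. I would split into two cases. If $(B^i)^2\ge 0$ for every component $B^i$ of $B'$ (in particular if $B'=0$), then $(B')^2\ge 0$ and hence
\[
0<A^2\le (A+B')^2=\bigl(-K_T-(c-ad)F\bigr)^2=(-K_T)^2-4(c-ad)\le 8-4(c-ad),
\]
which gives $c<ad+2$. Otherwise some component $B^{i_0}$ has $(B^{i_0})^2<0$, and I would apply adjunction on the smooth surface $T$: writing $-2\le 2p_a(B^{i_0})-2=(K_T+B^{i_0})\cdot B^{i_0}=\epsilon(B^{i_0})^2+(K_T+(1-\epsilon)B^{i_0})\cdot B^{i_0}$, and using $b_{i_0}<1-\epsilon$ together with $(B^{i_0})^2<0$ (so that $(B'-(1-\epsilon)B^{i_0})\cdot B^{i_0}\ge 0$) and the identity $(K_T+B)\cdot B^{i_0}-c(F\cdot B^{i_0})=(K_T+B')\cdot B^{i_0}=-A\cdot B^{i_0}+(ad-c)(F\cdot B^{i_0})$, one arrives at $-2\le\epsilon(B^{i_0})^2+(ad-c)(F\cdot B^{i_0})$; as $\epsilon(B^{i_0})^2<0$ and $F\cdot B^{i_0}\ge 1$, this again yields $c<ad+2$. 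Combining, $\sum_ib_i+2\sum_jc_j<2+2(ad+2)=2ad+6\le 2+\tfrac{8+4ad}{\epsilon}$, the last step using $0<\epsilon<1$.

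The only delicate point is the case where $B'$ has a component of very negative self-intersection: the clean numerical argument on the conic bundle fails there, and one falls back on adjunction on $T$, which is exactly where the $\epsilon$-kltness enters (through $b_{i_0}<1-\epsilon$). The ``good'' hypothesis is used to guarantee the splitting $B''=\sum_jc_jF^j$ into full fibres, so that $B''\equiv cF$ numerically; note that the argument in fact gives the sharper bound $2ad+6$.
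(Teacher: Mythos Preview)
Your proof is correct, and it takes a genuinely different route from the paper's. The paper bounds $\sum_j c_j$ by $\frac{4+2ad}{\epsilon}$ via the Connectedness Lemma: assuming $w=\sum_j c_j$ is too large, it writes $B-wF\sim_\bQ D\ge 0$, considers the pair $(T,(1-\tfrac{2+ad}{w})B+\tfrac{2+ad}{w}D+F_1+F_2)$, and uses connectedness of the non-klt locus to force a horizontal non-klt center $C$; the $\epsilon$-kltness of $(T,B)$ then bounds $\mult_C D$ and yields the contradiction. You instead argue purely by intersection theory on the conic bundle: when every horizontal component has $(B^i)^2\ge0$ you expand $(A+B')^2=(-K_T-(c-ad)F)^2$ directly, and when some $(B^{i_0})^2<0$ you run adjunction on that curve, using $b_{i_0}<1-\epsilon$ to control the cross-term. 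Your argument is more elementary---it avoids the Connectedness Lemma entirely---and, as you note, gives the sharper bound $\sum_ib_i+2\sum_jc_j<2ad+6$ independent of $\epsilon$. The paper's approach, on the other hand, is the same non-klt-center technique used elsewhere (e.g.\ in Theorem~\ref{thm volume}), so it fits a uniform template that generalises more readily to higher dimension.
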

\begin{proof}Firstly, we have
$$\sum_ib_i\leq \sum_ib_iB^i\cdot F=B\cdot F<2.$$
Hence 
it suffices to show that $\sum_jc_j\leq  \frac{4+2ad}{\epsilon}$. Assume, to the contrary,  that $w=\sum_jc_j>\frac{4+2ad}{\epsilon}$, then $B-wF\sim_\bQ D$ for some effective $\bQ$-divisor $D$ and for a general fiber $F$. For two general fibers $F_1$ and $F_2$, consider
$(T, (1-\frac{2+ad}{w})B+\frac{2+ad}{w}D+F_1+F_2)$, then 
$$-\Big(K_T+\big(1-\frac{2+ad}{w}\big)B+\frac{2+ad}{w}D+F_1+F_2\Big)\sim_\bQ A$$
is ample. Note that 
$$
F_1\cup F_2\subset \Nklt\Big(T, \big(1-\frac{2+ad}{w}\big)B+\frac{2+ad}{w}D+F_1+F_2\Big).
$$
By Connectedness Lemma, there is a curve $C$ in $\Nklt(T, (1-\frac{2+ad}{w})B+\frac{2+ad}{w}D+F_1+F_2)$ dominating $Z$. Hence 
$$
\mult_C \Big(\big(1-\frac{2+ad}{w}\big)B+\frac{2+ad}{w}D\Big)\geq 1.
$$
Since $(T, B)$ is $\epsilon$-klt, $\mult_{C}B< 1-\epsilon$, and hence $\mult_{C}\frac{2+ad}{w}D>\epsilon$. On the other hand, $\mult_{C}D\leq D\cdot F=B\cdot F<2$. Hence $w<\frac{4+2ad}{\epsilon}$, a contradiction.
\end{proof}

Also we have
\begin{align*}
(B)^2< {}&(B+A)^2=(adF-K_T)^2=4ad+(-K_T)^2\leq 4ad+8;\\
B\cdot adF< {}& (-K_T)\cdot adF =2ad.
\end{align*}


Apply Theorem \ref{thm surface} for $G=adF$ and $m=\frac{10+4ad}{\epsilon}$, then $(T, (1+t)B)$ is klt for all $0<t\leq \lambda'(\frac{10+4ad}{\epsilon}, \epsilon)$.
\end{proof}

\section{Proof of theorems and corollaries}
\begin{proof}[Proof of Theorem \ref{thm main}]It follows from Theorems \ref{thm birationality} and \ref{thm volume}. We may take $N(n,d,a,\epsilon)$ as in Theorem \ref{thm birationality}, and take 
$$
V(n,d,a,\epsilon)=r^n_{n-m}\cdot V'(n,d,a,\epsilon, N(n,d,a,\epsilon))
$$ 
as in Theorem \ref{thm volume}. The birational boundedness follows easily, cf. \cite[Lemma 2.4.2(2)]{HMX13}.
\end{proof}
Before proving the corollaries, we recall the following theorem proved in \cite{Jiang} by using Minimal Model Program.
\begin{thm}[{cf. \cite[Proof of Theorem 2.3]{Jiang}}]\label{thm MFS}Fix an integer $n>0$ and $0<\epsilon<1$. 
Every $n$-dimensional variety $X$ of $\epsilon$-Fano type is birational to an $n$-dimensional variety  $X'$ of $\epsilon$-Fano type  with a Mori fibration such that $\Vol(X, -K_X)\leq \Vol(X',-K_{X'}).
$
\end{thm}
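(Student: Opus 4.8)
The plan is to construct $X'$ by first replacing $X$ with a $\bQ$-factorial terminal model and then running a Minimal Model Program, verifying at each stage that being of $\epsilon$-Fano type is preserved with exactly the same $\epsilon$ and that $\Vol(-K)$ never decreases.

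First I would pass to a $\bQ$-factorial terminalization $\mu\colon X_0\to X$ of the \emph{variety} $X$, which by construction extracts only divisors of discrepancy $\le 0$, so $K_{X_0}=\mu^*K_X+\sum a_EE$ with $a_E=a_E(X,0)\le 0$ over the exceptional locus. Then $-K_{X_0}\ge \mu^*(-K_X)$, hence $\Vol(X_0,-K_{X_0})\ge \Vol(X_0,\mu^*(-K_X))=\Vol(X,-K_X)$. Taking the crepant transform $B_0$ of $B$ (so $K_{X_0}+B_0=\mu^*(K_X+B)$; note $B_0\ge 0$ since $a_E(X,B)\le a_E(X,0)\le 0$ on exceptional divisors), the pair $(X_0,B_0)$ is again $\epsilon$-klt and $-(K_{X_0}+B_0)$ is nef and big. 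Here I use the elementary fact that a $\bQ$-factorial $Y$ carrying an $\epsilon$-klt pair $(Y,\Delta)$ with $-(K_Y+\Delta)$ nef and big is of $\epsilon$-Fano type: writing $-(K_Y+\Delta)\sim_\bQ A+E_0$ with $A$ ample and $E_0\ge 0$ (Kodaira's lemma), the class $-(K_Y+\Delta+\eta E_0)\sim_\bQ (1-\eta)(-(K_Y+\Delta))+\eta A$ is ample while $(Y,\Delta+\eta E_0)$ stays $\epsilon$-klt for $0<\eta\ll 1$. Thus $X_0$ is $\bQ$-factorial, terminal, of $\epsilon$-Fano type, with $\Vol(X_0,-K_{X_0})\ge \Vol(X,-K_X)$.

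Next I would run the $K_{X_0}$-MMP. Since $-K_{X_0}$ is big (ample plus effective), $K_{X_0}$ is not pseudo-effective; and $X_0$ is of Fano type, so any MMP on it terminates. As $K$ stays non-pseudo-effective along the program, it cannot terminate with a minimal model, so it ends with a Mori fibration $\pi\colon X'\to Z$ (the last contraction is of fiber type: $\rho(X'/Z)=1$, $\dim X'>\dim Z$, $-K_{X'}$ ample over $Z$), and $X'$ stays $\bQ$-factorial and terminal since $K$-flips and divisorial contractions of $K$-negative rays do not worsen singularities. At each step $X_i\dashrightarrow X_{i+1}$ I would check two things. \emph{Volume:} for a divisorial contraction $g$ contracting $E$ one has $K_{X_i}=g^*K_{X_{i+1}}+aE$ with $a>0$, so $-K_{X_i}\le g^*(-K_{X_{i+1}})$ and $\Vol(X_i,-K_{X_i})\le \Vol(X_{i+1},-K_{X_{i+1}})$; for a flip, on a common resolution $p,q$ one gets $q^*(-K_{X_{i+1}})-p^*(-K_{X_i})=\sum(a_F(X_{i+1})-a_F(X_i))F\ge 0$, so again the volume does not decrease. \emph{$\epsilon$-Fano type:} after replacing the boundary $B_i$ by $B_i+\tfrac{1}{N}\Theta$ for a general $\Theta\in|-N(K_{X_i}+B_i)|$ with $N$ large and divisible, I may assume $(X_i,B_i)$ is $\epsilon$-klt with $K_{X_i}+B_i\sim_\bQ 0$ (this does not alter $-K_{X_i}$, which stays big); then a $K_{X_i}$-negative ray is $(K_{X_i}+B_i)$-trivial, the step is crepant for $(X_i,B_i)$, so $(X_{i+1},B_{i+1})$ is $\epsilon$-klt with $K_{X_{i+1}}+B_{i+1}\sim_\bQ 0$ and $-K_{X_{i+1}}$ big, whence $X_{i+1}$ is of $\epsilon$-Fano type by the fact above. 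Applying this along the whole program, $X'$ is an $n$-dimensional variety of $\epsilon$-Fano type with a Mori fibration, birational to $X$, and $\Vol(X',-K_{X'})\ge \Vol(X_0,-K_{X_0})\ge \Vol(X,-K_X)$.

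The termination of the MMP, the preservation of $\bQ$-factoriality and terminality, and the volume inequalities are routine MMP facts. The point I expect to require genuine care — and the reason the \emph{same} $\epsilon$ survives — is the preservation of $\epsilon$-kltness under the two perturbations (upgrading nef-and-big to ample in the fact, and arranging $K+B\sim_\bQ 0$ at each step); both are legitimate because $\epsilon$-kltness is an open condition and only finitely many divisors are relevant on a fixed log resolution.
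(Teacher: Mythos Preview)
Your argument is correct and follows exactly the standard terminalization-plus-$K$-MMP approach; the paper does not reprove this statement but simply cites \cite[Proof of Theorem 2.3]{Jiang}, where the same construction appears. One small imprecision: when you conclude that $X_{i+1}$ is of $\epsilon$-Fano type ``by the fact above'', your fact as stated requires $-(K_Y+\Delta)$ nef and \emph{big}, whereas after arranging $K_{X_i}+B_i\sim_\bQ 0$ you only have $-(K_{X_{i+1}}+B_{i+1})\sim_\bQ 0$; the conclusion is still correct, since $-K_{X_{i+1}}\sim_\bQ B_{i+1}$ is big, so writing $B_{i+1}\sim_\bQ A+E_0$ and taking $\Delta=(1-\eta)B_{i+1}+\eta E_0$ gives $-(K_{X_{i+1}}+\Delta)\sim_\bQ \eta A$ ample with $(X_{i+1},\Delta)$ still $\epsilon$-klt for $0<\eta\ll 1$, by the same openness you already use.
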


\begin{proof}[Proof of Corollary \ref{cor BBAB}]
By Theorem \ref{thm MFS}, to prove BBAB$_n$, we only need to show the birational boundedness of varieties of $\epsilon$-Fano type  with a Mori fibration. 

Let $X$ be an $n$-dimensional variety of $\epsilon$-Fano type  with a Mori fibration $f: X\to Z$. By S$_n$, $Z$ is of $\delta(n, \epsilon)$-Fano type of dimension less than $n$. 

If $\dim Z=0$, then $X$ is an $n$-dimensional $\bQ$-factorial terminal Fano variety of Picard number one. There is nothing to prove since we assume BTBAB$_n$.

Suppose that $\dim Z>0$. Then $Z$ is bounded by BAB$_{\leq n-1}$. In particular, there exists a very ample divisor $H$ on $Z$ with degree $d$ such that $d$ is bounded from above  by some number $D(n, \epsilon)$ depending only on $n$ and $\epsilon$. By definition, there exists a $\bQ$-divisor $B$ such that $(X, B)$ is an $\epsilon$-klt log Fano pair. Hence $(f:X\to Z,B,H)$ is an almost-extremal $(n, d, 0,\epsilon)$-Fano fibration. Since BrTBAB$_{\leq n-1}$ is implied by BAB$_{\leq n-1}$, by Theorem \ref{thm main}, for fixed $d$, such $X$ forms a birationally bounded family. Since $d$ has only finitely many possible values, we complete the proof.
\end{proof}

\begin{proof}[Proof of Corollary \ref{cor WBAB}]
By Theorem \ref{thm MFS}, to prove WBAB$_n$, we only need to show the boundedness of anti-canonical volumes of varieties of $\epsilon$-Fano type  with a Mori fibration. 

Let $X$ be an $n$-dimensional variety of $\epsilon$-Fano type  with a Mori fibration $f: X\to Z$. By S$_n$, $Z$ is of $\delta(n, \epsilon)$-Fano type of dimension less than $n$. 

If $\dim Z=0$, then $X$ is an $n$-dimensional $\bQ$-factorial terminal Fano variety of Picard number one. There is nothing to prove since we assume WTBAB$_n$.

Suppose that $\dim Z>0$. Then $Z$ is bounded by BAB$_{\leq n-1}$. In particular, there exists a very ample divisor $H$ on $Z$ with degree $d$ such that $d$ is bounded from above  by some number $D(n, \epsilon)$ depending only on $n$ and $\epsilon$. By definition, there exists a $\bQ$-divisor $B$ such that $(X, B)$ be an $\epsilon$-klt log Fano pair. Hence $(f:X\to Z,B,H)$ is an almost-extremal $(n, d, 0,\epsilon)$-Fano fibration. By Theorem \ref{thm volume}, $\Vol(X, -K_X) \leq V'(n, d,0, \epsilon, 0)$. Since $d$ has only finitely many possible values, we complete the proof.
\end{proof}

\begin{proof}[Proof of Corollaries \ref{cor BBAB3} and \ref{cor WBAB3}]
BAB$_{2}$ was proved by Alexeev \cite{AK2}, BTBAB$_3$ and WTBAB$_3$ are implied by TBAB$_3$ which was proved by Kawamata \cite{K}, LCTB$_2$ holds by Theorem \ref{thm LCTB2}, and GA$_2$ was proved in \cite[Theorem 2.8]{Jiang}. 

Finally, we show that S$_3$ is implied by \cite[Corollary 1.7]{Birkar} (cf. \cite[Theorem 6.3]{Jiang}). Consider an $\epsilon$-klt log Fano pair $(X, B)$ of dimension $3$ with a Mori fibration $X\to Z$.   If $\dim Z\leq 1$, there is nothing to prove. Suppose that $\dim Z=2$, then there exist effective $\bQ$-divisors $\Delta$ and $\Delta'$ such that $(Z, \Delta)$ is klt, $-(K_Z+\Delta)$ is ample by \cite[Corollary 3.3]{FG}, and $(Z, \Delta')$ is $\delta$-klt, $-(K_Z+\Delta')\sim_\bQ 0$ by \cite[Corollary 1.7]{Birkar}. Note that $\delta$ depends only on $\epsilon$. We may choose sufficiently small $t>0$ such that $(Z, (1-t)\Delta'+t\Delta)$ is still $\delta$-klt. In this case, 
$$
-(K_Z+(1-t)\Delta'+t\Delta)\sim -t(K_Z+\Delta)
$$
is ample. Hence $Z$ is of $\delta$-Fano type.

As all the conjectures we need are confirmed in lower dimension, BBAB$_3$ and WBAB$_3$ hold by Corollaries \ref{cor BBAB} and \ref{cor WBAB}.
\end{proof}

\end{document}